\documentclass[12pt]{article}
\usepackage{amssymb,amsmath,amsthm,tikz,multirow,nccrules,mathrsfs}
\usetikzlibrary{calc, arrows, arrows.meta}

\title{Tilings of Closed Surfaces by Congruent Most Curvilinear Polygons}
\author{Hao Fang Sun, Min Yan \\
Hong Kong University of Science and Technology}

\usepackage[hidelinks, hyperindex]{hyperref}

\newcommand{\mc}{\mathcal}
\newcommand{\bb}{\mathbb}

\newtheorem{theorem}{Theorem}
\newtheorem{lemma}[theorem]{Lemma}
\newtheorem{corollary}[theorem]{Corollary}
\newtheorem{proposition}[theorem]{Proposition}
\newtheorem*{theorem*}{Theorem}

\theoremstyle{definition}
\newtheorem*{definition*}{Definition}
\newtheorem*{case*}{Case}
\newtheorem*{subcase*}{Subcase}

\theoremstyle{remark}

\numberwithin{equation}{section}

\begin{document}

\maketitle

\begin{abstract}
We study tilings of closed surfaces by congruent most
curvilinear polygons. By relating such tilings to branched
coverings determined by planar diagrams and vertex types,
we establish necessary and sufficient conditions for a
given closed topological surface to admit a metric of
constant curvature supporting a tiling with prescribed
combinatorial data and number of tiles.
We also characterize minimal tilings, which do not descend
through nontrivial unbranched coverings to tilings with
fewer tiles, in terms of common block decompositions
of the associated permutations. Explicit permutation
constructions yield infinite families of minimal tilings
with prescribed vertex types, even when the planar diagram and vertex types
are fixed. In the hyperbolic setting, these results
also give finite-index subgroups of the universal
tiling symmetry groups that are maximal among
torsion-free subgroups, with the index equal to
the number of tiles.
\end{abstract}

\section{Introduction}

Unless otherwise stated, all surfaces considered in this paper
are connected, compact, and without boundary. When discussing
\emph{tilings}, we equip the surface with a metric of constant
Gaussian curvature, with respect to which isometry and congruence
are defined. A polygon has corners and sides, and its interior
is homeomorphic to an open disk. Its sides need not be geodesic;
we call such a polygon \emph{curvilinear}.

We study tilings of closed surfaces by congruent
\emph{most curvilinear polygons}. Our main questions concern
the existence of tilings with prescribed combinatorial data
and the existence of \emph{minimal tilings}, which do not
descend through a nontrivial unbranched covering to tilings
with fewer tiles. The combinatorial data are encoded by a
\emph{planar diagram} and prescribed \emph{vertex types}.
By relating these data to branched coverings and permutation
representations, we obtain necessary and sufficient conditions
for existence, a criterion for minimality, and infinite
families of minimal tilings.

Section~2 introduces most curvilinear polygons and the planar
diagrams and vertex types used to describe their tilings.
These notions record the side identifications and the
arrangements of corners at vertices, providing the
combinatorial framework for the rest of the paper.

Section~3 develops the covering interpretation of tilings.
We define descent and minimality, and study the tiling
induced on the universal cover. For the tilings under
consideration, the symmetry group of the universal tiling
acts simply transitively on its tiles. We relate surface
tilings to branched coverings of the surface associated
with the planar diagram
(Lemma~\ref{lem:branched-cover}), and reduce even-sided
planar diagrams to standard forms by cut-and-paste
operations (Lemma~\ref{standard}). This leads to explicit
presentations of the universal tiling symmetry groups
(Theorem~\ref{standard_symmetry}).

Section~4 establishes our main existence theorem
(Theorem~\ref{case12}). Given a closed topological surface,
a planar diagram, prescribed vertex types, and a number
of tiles, the theorem gives necessary and sufficient
conditions for the surface to admit a metric of constant
curvature supporting the required tiling. The conditions
involve divisibility, the parity of the total branching,
Euler characteristic, and orientability. The proof uses
the correspondence with branched coverings and
realizability results for uniform branch data. The metric
is chosen as part of the construction; the theorem does
not require a metric to be fixed in advance.

Section~5 treats polygons with an odd number of sides.
We obtain standard forms for their planar diagrams and
reduce the existence problem to the even-sided case
by replacing the rotationally symmetric side with a
suitable pair of sides.

Section~6 studies minimal tilings by congruent even-sided
most curvilinear polygons. We first characterize minimality
in terms of common block decompositions of the permutations
associated with the branched covering
(Theorem~\ref{minimal criterion}). We then use explicit
permutation constructions to establish three families
of existence results.

More precisely, suppose that the surface $S_D$ associated
with the planar diagram satisfies $\chi(S_D)\le0$.
Let $d'$ be the least positive integer satisfying the
divisibility and parity conditions for the prescribed
vertex types. If the diagram has at least two vertices,
minimal tilings with $sd'$ tiles exist for every sufficiently
large prime $s$ (Theorem~\ref{theorem1}). If the diagram has
one vertex with prescribed type $V^k$, such minimal tilings
exist for every integer $s\ge1$ when $k\ge3$
(Theorem~\ref{onevertex}), and for every $s=2^m$ with
$m\ge0$ when $k=2$ (Theorem~\ref{22}). In particular,
even with the planar diagram and vertex types fixed,
minimal tilings can have arbitrarily many tiles.

Finally, Section~\ref{sec:fuchsian-groups} gives a
group-theoretic interpretation of these results.
The symmetry groups in Theorem~\ref{standard_symmetry}
are Fuchsian groups in the orientable hyperbolic case,
and NEC groups in the nonorientable hyperbolic case.
Surface tilings correspond to finite-index torsion-free
subgroups, with the index equal to the number of tiles.
Minimal tilings correspond to subgroups that are maximal
among torsion-free subgroups. Thus our existence results
also yield subgroups of prescribed indices and infinitely
many conjugacy classes of maximal torsion-free subgroups.

One motivation for studying minimal tilings is to approach
the classification of surface tilings through those that
cannot descend further. Every tiling descends to a minimal
one after finitely many nontrivial descents, since each
descent decreases its number of tiles. Our results show,
however, that this reduction does not lead to a finite
list of minimal tilings, even when the planar diagram
and vertex types are fixed. A classification must therefore
account for these infinite families.

\section{Curvilinear Tiles}

Let $P$ be the curvilinear polygon in a side-to-side tiling, called {\em prototile}. The prototile has {\em sides} and {\em corners}. They are usually called {\em edges} and {\em vertices}, but actually carry extra meaning. A side $s$ of $P$ is not just a curve in the boundary of $P$, but also includes the side of the curve that is inside $P$. The particular side of the curve is illustrated by the shaded region on the left of Figure \ref{general1}. A corner is the meeting place of two compatible sides. 

\begin{figure}[htp]
\centering
\begin{tikzpicture}[>=latex,scale=1]


\fill[gray!50]
	(0,0) to[out=80, in=-20] (0,1.4) -- (0.2,1.4) to[out=-30, in=80] (0.2,0);

\draw
	(0,0) to[out=80, in=-20] (0,1.4);

\draw[dashed]
	(0,1.4) -- (1.2,1.4) arc (90:-90:0.5 and 0.7) -- (0,0);

\node at (0.35,0.7) {\small $s$};
\node at (0,0.7) {\small $\bar{s}$};
\node at (1,0.7) {\small $P$};


\begin{scope}[xshift=5cm]

\foreach \a in {0,1}
\draw[xshift=1.4*\a cm]
	(0,0) to[out=80, in=-20] (0,1.4);

\draw[dashed]
	(1.4,1.4) -- (-1.2,1.4) arc (90:270:0.5 and 0.7) -- (1.4,0);
	
\node at (0.35,0.7) {\small $s$};
\node at (0,0.7) {\small $\bar{s}$};
\node at (1.4,0.7) {\small $\bar{s}$};
\node at (0.8,0.3) {\small tile 1};
\node at (-0.7,0.3) {\small tile 2};

\end{scope}

\end{tikzpicture}
\caption{Curvilinear side.}
\label{general1}
\end{figure}

We call the curve part of a side $s$ an {\em edge}. In other words, an edge $e$ ignores the shaded region in a side, and has two sides. If one side is $s$, then we denote the other side by $\bar{s}$. In a {\em side-to-side} tiling, the two sides $s$ and $\bar{s}$ belong to two tiles. Since we assume all tiles are congruent, both $s$ and $\bar{s}$ are sides of $P$. See the right of Figure \ref{general1}.

If $s$ and $\bar{s}$ are distinct, in the sense that there is no self-isometry of the edge that sends one side to the other side, then $s$ and $\bar{s}$ must be different sides of $P$. If $s$ and $\bar{s}$ are the same, in the sense there is such an isometry, then we write $s=\bar{s}$.  

Therefore the symmetry of the edge $e$ characterises whether $s$ and $\bar{s}$ are distinct. There are three nontrivial isometries that can be applied to an edge $e$, such that the set of two end points is preserved: the horizontal flip $e^h$, the vertical flip $e^v$, and the rotation $e^r$, given by Figure \ref{transform}. The horizontal flip can be regarded as the change of direction. Therefore we also write $e^h$ as $e^{-1}$. Then we may assign distinct direction to $e$, i.e., $e\ne e^{-1}$, if and only if $e\ne e^h$. We also write $e=e^{-1}$ in case $e=e^h$. We may apply the same isometries to a side $s$ of $e$, and write $s^h$ as $s^{-1}$. We also write $s=s^{-1}$ if and only if $s=s^h$.

\begin{figure}[htp]
\centering
\begin{tikzpicture}[>=latex,scale=1]

\foreach \a in {1,-1}
\foreach \b in {1,-1}
\draw[xscale=\a,yscale=\b]
	(-1.5,0) to[out=30, in=100] (1.5,0);

\draw[dotted]
	(-2.2,0) -- (2.2,0)
	(0,1.3) -- (0,-1.3);

\fill
	(1.5,0) circle (0.05)
	(-1.5,0) circle (0.05);

\node at (1,0.8) {$e$};
\node at (1,-0.8) {$e^v$};
\node at (-1,0.85) {$e^h$};
\node at (-1,-0.8) {$e^r$};

\node at (2.4,0) {$v$};
\node at (0,1.5) {$h$};

\end{tikzpicture}
\caption{Isometric transformations of a curvilinear edge.}
\label{transform}
\end{figure}

There are four types of edges. Correspondingly, there are four types of sides.
\begin{enumerate}
\item general: $e,e^v,e^h,e^r$ are all distinct. We have $s\ne\bar{s}$, and $s\ne s^{-1}$.
\item $h$-symmetric: $e=e^h\ne e^r$. We have $s\ne\bar{s}$, and $s=s^{-1}$.
\item $r$-symmetric: $e=e^r\ne e^h$. We have $s=\bar{s}$, and $s\ne s^{-1}$.
\item straight: $e=e^v$. This implies $e,e^v,e^h,e^r$ are the same. We have $s=\bar{s}$, and $s=s^{-1}$.
\end{enumerate}

Figure \ref{type} gives examples of the four types of edges, and their schematic drawings. We denote the corresponding four types of sides by $g$ (general), $h$ ($h$-symmetric), $r$ ($r$-symmetric), $a$ (straight arc). We use $g^{-1},r^{-1}$ because they are the sides satisfying $s\ne s^h$, and we do not use $h^{-1},a^{-1}$. We also use $\bar{g}$ and $\bar{h}$ because they are the sides satisfying $s\ne \bar{s}$, and we do not use $\bar{r},\bar{a}$.

\begin{figure}[htp]
\centering
\begin{tikzpicture}[>=latex,scale=1]

\draw
	(-1.5,0) -- (1.5,0)
	(-1.5,0) to[out=80, in=150] (1.5,0)
	(-1.5,0) to[out=50, in=130] (1.5,0)
	(-1.5,0) to[out=-40, in=140] (1.5,0)
	;

\fill
	(1.5,0) circle (0.05)
	(-1.5,0) circle (0.05);

\node at (-1,0.9) {1};
\node at (0.5,0.9) {2};
\node at (-0.5,-0.4) {3};
\node at (0.8,-0.2) {4};


\begin{scope}[xshift=2.5cm]

\draw
	(0,0) -- (0.6,0) -- (0.6,0.1) -- (0.8,0) -- (1.4,0);

\draw[xshift=2cm]
	(0,0) -- (0.5,0) -- (0.7,0.1) -- (0.9,0) -- (1.4,0);

\draw[xshift=4cm]
	(0,0) -- (0.5,0) -- (0.6,-0.1) -- (0.8,0.1) -- (0.9,0) -- (1.4,0);
	
\draw
	(6,0) -- ++(1.4,0);

\node at (0.7,0.4) {$1\; g$};
\node at (2.7,0.4) {$2\; h$};
\node at (4.7,0.4) {$3\; r$};
\node at (6.7,0.4) {$4\; a$};

\end{scope}	

\end{tikzpicture}
\caption{Four types of edges and their schematic drawings.}
\label{type}
\end{figure}

In a tiling of a compact surface, the number of tiles is finite. In a side-to-side tiling by congruent polygons, each edge $e(s)$ means both sides $s$ and $\bar{s}$ appear once in the tiling. If $s\ne\bar{s}$, then this implies the total numbers of the sides $s$ and $\bar{s}$ in the tiling are the same. 

\begin{lemma}\label{parity}
In a side-to-side tiling by congruent curvilinear polygons, $g$ and $\bar{g}$ appear the same number of times in the prototile, and $h$ and $\bar{h}$ appear the same number of times in the prototile. 
\end{lemma}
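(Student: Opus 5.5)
The plan is a double count of sides over the whole tiling. Let the tiling have $n$ tiles (finite, since the surface is compact), and let the prototile $P$ contain $m_s$ sides congruent to a given side $s$. Because every tile is congruent to $P$, the sides of the tiles congruent to $s$ number exactly $n m_s$, and similarly there are $n m_{\bar s}$ sides congruent to $\bar s$. I count these sides a second time by grouping them according to the edges of the tiling, and then compare the two counts.

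First, I use side-to-side: every edge $e$ of the tiling has exactly two sides, and they belong to the two tiles meeting along $e$, as described before the lemma. If one side of $e$ is congruent to $s$, then the other side is by definition $\bar s$. Now take $s=g$ or $s=h$. From the classification of the four edge types, $s\neq\bar s$ in these cases, so $g$ and $\bar g$ (or $h$ and $\bar h$) are genuinely different sides of $P$. The key point is that no isometry of the edge exchanges its two sides. Consequently, the pair of sides of $e$ is unambiguously $\{s,\bar s\}$ with one copy of each, and it cannot be $\{s,s\}$. Also, an edge carrying an $s$-side is assigned to exactly one such side.

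Second, let $E_s$ be the number of edges of the tiling one of whose sides is congruent to $s$. Each such edge contributes exactly one $s$-side and exactly one $\bar s$-side, and every side of a tile lies on exactly one edge. Hence $n m_s = E_s = n m_{\bar s}$, and since $n\ge1$ we get $m_s=m_{\bar s}$. Applying this with $s=g$ and with $s=h$ gives the lemma. A subtlety is that $P$ may contain several general sides of different shapes. Applying the argument separately to each congruence class of side gives the stronger statement, and the stated counts for $g,\bar g$ and $h,\bar h$ follow by summing.

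The main obstacle is justifying the well-definedness used in the second step. An edge whose two sides are $s$ and $\bar s$ must not be countable as two $s$-sides, and congruence of sides must respect the side structure, including orientation ($g$ versus $g^{-1}$). I would handle this using the definition that $s=\bar s$ exactly when some self-isometry of the edge swaps its sides. For $g$ and $h$ no such isometry exists, so the two sides of the edge are distinguished intrinsically, and "congruent to $s$" versus "congruent to $\bar s$" is an invariant labeling. For $h$, the symmetry $e=e^h$ only reverses direction and does not swap the sides, so it causes no trouble.
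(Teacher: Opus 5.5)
Your double count is correct and is essentially the paper's argument, given in the remark just before the lemma: when $s\neq\bar s$, every edge carries exactly one $s$ and one $\bar s$, so their totals over the finitely many congruent tiles agree, and this forces equal multiplicities in the prototile. Your extra bookkeeping is fine: counting tile--side incidences also handles edges with both sides on one tile, as in $\mathcal{T}_D$, and working per congruence class gives the stronger statement. One adjustment is needed: ``congruent'' should allow orientation-reversing isometries rather than separate $g$ from $g^{-1}$, since mirror-image tiles occur; this is harmless because for types $g$ and $h$ neither $e^v$ nor $e^r$ equals $e$, so no isometry exchanges the two sides of an edge.
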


Consider the extreme case of the {\em most curvilinear} polygon. Such a polygon has the maximal number of sides of the general type. Motivated by Lemma \ref{parity}, we introduce the following definition.\\

\noindent\textbf{Definition.}
A curvilinear polygon \(P\) is called \emph{most curvilinear}
if its sides satisfy one of the following conditions:
\begin{itemize}
    \item If \(P\) has \(2m\) sides, they consist of \(m\)
    distinct pairs \(g_i,\bar g_i\), \(i=1,\ldots,m\),
    of general type.

    \item If \(P\) has \(2m+1\) sides, they consist of \(m\)
    distinct pairs \(g_i,\bar g_i\), \(i=1,\ldots,m\),
    of general type, together with one side of
    \(r\)-symmetric type.
\end{itemize}

The first case is equivalent to a {\em planar diagram} $D$. An example is the left of Figure \ref{oct1}, with the standard notation being a circularly ordered sequence $g_1g_2\bar{g}_1g_3g_4\bar{g}_2\bar{g}_3^{-1}\bar{g}_4^{-1}$. In the notation, each pair $g$ and $\bar{g}$ appears either in {\em opposing} way $\cdots g\cdots \bar{g}^{-1}\cdots$ or in {\em twisted} way $\cdots g\cdots \bar{g}\cdots$, with the superscript $-1$ meaning the clockwise direction, as opposed to the default counterclockwise direction. 

We simplify the presentation of tiling pictures by redrawing and relabelling the planar diagram $D$ as the right of Figure \ref{oct1}. The alphabets $g_i$ and $\bar{g}_i$ in the planar diagram become two $a_i$, one inside $P$ and one outside $P$. The planar diagram becomes $a_1a_2a_1a_3a_4a_2^{-1}a_3^{-1}a_4$. When copies of the prototile are glued together to form a tiling, the labels $a_i$, including the sides where $a_i$ are located, and the arrows must match. 

\begin{figure}[htp]
\centering
\begin{tikzpicture}[>=latex,scale=1]

\begin{scope}[xshift=-3.5 cm]

\foreach \a/\x/\y in 
	{0/-1/1, 1/1/1, 2/1/1, 3/-1/1/, 4/1/1, 5/1/1, 6/-1/-1, 7/-1/-1 } 
\draw[rotate=45*\a, xshift=1.2071 cm, xscale=\x, yscale=\y]
	(0,-0.5) -- (0,-0.1) -- (0.1,-0.1) -- (0,0.1) -- (0,0.5);

\foreach \u/\a in 
	{1/0, 2/1, 3/3, 4/4}
\node at (45+45*\a:1) {\scriptsize $g_{\u}$};

\foreach \u/\a in 
	{1/2, 2/5, 3/6, 4/7}
\node at (45+45*\a:0.95) {\scriptsize $\bar{g}_{\u}$};

\end{scope}

\foreach \a in {0,...,7}
\draw
	(-22.5+45*\a:1.3066) -- (22.5+45*\a:1.3066);

\foreach \a in {0,...,7}
\node[red] at (22.5+45*\a:1.1) {\scriptsize \a};

\foreach \u/\a in 
	{1/0, 2/1, 3/3, 4/4}
\node at (45+45*\a:1.4) {\scriptsize $a_{\u}$};

\foreach \u/\a in 
	{1/2, 2/5, 3/6, 4/7}
\node at (45+45*\a:1) {\scriptsize $a_{\u}$};

\foreach \a in {0,1,2,3,4,7}
\draw[rotate=45+45*\a, ->]
	(1.2071,0) -- ++(0,0.1);

\foreach \a in {5,6}
\draw[rotate=45+45*\a, ->]
	(1.2071,0) -- ++(0,-0.1);

\foreach \a in {0,2,5}
\fill
	(22.5+45*\a:1.3066) circle (0.05);

\foreach \a in {1,3,4,6,7}
\filldraw[fill=white]
	(22.5+45*\a:1.3066) circle (0.05);
	
\end{tikzpicture}
\caption{Octagonal curvilinear prototile $121342^{-1}3^{-1}4$.}
\label{oct1}
\end{figure}

We also label the corners of the prototile $P$ by red numbers ${\color{red} i}\in {\bb Z}_8$, in circular way. The labels assign an orientation of $P$. These labels become the labels of the corners of tiles and also provide the orientations of tiles, by the congruence between tiles and $P$. 

On the left of Figure \ref{oct2}, we have a center tile $t$ congruent to the prototile in Figure \ref{oct1}. Then we have eight tiles sharing edges with $t$. We denote the tile sharing the edge connecting the corners {\color{red} $i$} and {\color{red} $i+1$} of $t$ by $t_i$. The picture shows three such tiles $t_0,t_3,t_6$, and the orientations of the tiles. The matching of sides implies that the eight neighboring tiles $t_i$ are glued to the center tile $t$ in unique ways. 

\begin{figure}[htp]
\centering
\begin{tikzpicture}[>=latex,scale=1]

\foreach \u/\v/\w/\x in 
	{0/1/0/0, 4/-1/1/1, 1/1/4/1, -1/1/-1/1}
{
\begin{scope}[shift={(45*\w:1.2071*2*\x)}, rotate=45*\u,  xscale=\v]

\foreach \a in {0,...,7}
\draw
	(-22.5+45*\a:1.3066) -- (22.5+45*\a:1.3066);

\foreach \a in {0,...,7}
\node[red] at (22.5+45*\a:1.1) {\scriptsize \a};

\foreach \u/\a in 
	{1/0, 2/1, 3/3, 4/4}
\node at (45+45*\a:1.41) {\scriptsize $a_{\u}$};

\foreach \u/\a in 
	{1/2, 2/5, 3/6, 4/7}
\node at (45+45*\a:1) {\scriptsize $a_{\u}$};

\foreach \a in {0,1,2,3,4,7}
\draw[rotate=45+45*\a, ->]
	(1.2071,0) -- ++(0,0.1);

\foreach \a in {5,6}
\draw[rotate=45+45*\a, ->]
	(1.2071,0) -- ++(0,-0.1);

\foreach \a in {0,2,5}
\fill
	(22.5+45*\a:1.3066) circle (0.05);

\foreach \a in {1,3,4,6,7}
\filldraw[fill=white]
	(22.5+45*\a:1.3066) circle (0.05);
	
\end{scope}
}

\node at (0,0) {$t$};

\draw[->]
	(-30:0.3) arc (-30:210:0.3);
	
\foreach \a/\b in 
	{0/-1, 3/1, 6/1}
{ 
\draw[->, shift={(45+45*\a:1.2071*2)}, xscale=\b]
	(-30:0.3) arc (-30:210:0.3);
\node at (45+45*\a:1.2071*2) {$t_{\a}$};
}

\begin{scope}[shift={(6cm,0.8cm)}]

\foreach \a in {0,...,3}
\draw
	(0,0) -- (60*\a:1.6);

\foreach \a in {0,1,2}
\draw[rotate=60*\a]	
	(0:1.6) -- (10:1.8) -- (20:1.95) -- (30:2) -- (40:1.95) -- (50:1.8) -- (60:1.6);
	
\fill (0,0) circle (0.05);

\draw[dotted, very thick]
	(210:0.3) arc (210:330:0.3);

\foreach \a/\x in {0/0, 1/5, 2/2}
\node[red] at (30+60*\a:0.3) {\scriptsize \x};

\foreach \a/\x/\u in {0/1/-1, 1/4/-1, 2/2/-1, 3/1/-1}
\node at (12*\u+60*\a:1) {\scriptsize $a_{\x}$};

\foreach \a/\b in {0/1, 1/-1, 2/-1, 3/1}
\draw[->]
	(60*\a:1) -- ++(60*\a:0.1*\b);

\foreach \a/\b in 
	{0/1, 1/-1, 2/-1} 
\draw[->, shift={(30+60*\a:1.4)}, xscale=\b]
	(-30:0.2) arc (-30:210:0.2);

\foreach \a/\b in 
	{0/1, 1/-1, 2/-1} 
\draw[->, shift={(30+60*\a:1.4)}, xscale=\b]
	(-30:0.2) arc (-30:210:0.2);

\node at (30:1.4) {\scriptsize $t$};
\node at (90:1.4) {\scriptsize $t_1$};
\node at (150:1.4) {\scriptsize $t_2$};
\node at (190:1.3) {\scriptsize $t_3$};
	
\end{scope}

\begin{scope}[shift={(6cm,-2.4cm)}]

\foreach \a in {0,...,5}
\draw
	(0,0) -- (36*\a:1.6);

\foreach \a in {0,...,4}
\draw[rotate=36*\a]	
	(0:1.6) -- (6:1.9) -- (12:2.1) -- (18:2.2) -- (24:2.1) -- (30:1.9) -- (36:1.6);

\filldraw[fill=white] (0,0) circle (0.05);

\draw[dotted, very thick]
	(210:0.3) arc (210:330:0.3);
	
\foreach \a/\x in {0/1, 1/3, 2/7, 3/4, 4/6}
\node[red] at (18+36*\a:0.5) {\scriptsize \x};

\foreach \a/\x/\u in {0/2/-1, 1/1/1, 2/3/1, 3/4/-1, 4/3/1, 5/2/-1}
\node at (10*\u+36*\a:1.1) {\scriptsize $a_{\x}$};

\foreach \a/\b in {0/1, 1/-1, 2/1, 3/1, 4/-1, 5/1}
\draw[->]
	(36*\a:1) -- ++(36*\a:0.1*\b);

\foreach \a/\b in 
	{0/1, 1/-1, 2/-1, 3/1, 4/1} 
\draw[->, shift={(18+36*\a:1.5)}, xscale=\b]
	(-30:0.2) arc (-30:210:0.2);
	
\end{scope}

\end{tikzpicture}
\caption{Unique gluing and unique vertex.}
\label{oct2}
\end{figure}

On the upper right of Figure \ref{oct2}, we start with the corner ${\color{red} 0_+}$ of a tile at the vertex $\bullet$, with the subscript $+$ indicating the tile is counterclockwise oriented. Then by the unique ways the eight neighboring tiles are glued to this tile, we determine the adjacent tile with the corner ${\color{red} 5_-}$ at $\bullet$, with the subscript $-$ indicating the tile is clockwise oriented. By the same reason, we further determine the next adjacent tile with the corner ${\color{red} 2_-}$ at $\bullet$. Then we arrive at an edge $1$ that has the same relation to $\bullet$ as the starting edge $1$. This means the further corners we obtain repeats the ${\color{red} 2_-5_-0_+}$ cycle. Here the notation is in the reverse order ${\color{red} 2_-}\leftarrow {\color{red} 5_-}\leftarrow {\color{red} 0_+}$, for future convenience. Therefore the vertex $\bullet={\color{red} \cdots 2_-5_-0_+2_-5_-0_+2_-5_-0_+}=({\color{red} 2_-5_-0_+})^k$ for an integer $k$. 

In fact, if we start with corner {\color{red} 5} or {\color{red} 2} at a vertex, we still conclude the same vertex $({\color{red} 0_+2_-5_-})^k=({\color{red} 5_-0_+2_-})^k=({\color{red} 2_-5_-0_+})^k$. Here the notation for vertices are circularly ordered. Since the sum of the angle values of all corners at the vertex is $2\pi$, we conclude the multiplicity $k$ is the same at all such vertices.

By the same reason, we find another vertex $\circ=({\color{red} 6_+4_+7_-3_-1_+})^{k'}$. Again the multiplicity $k'$ is the same at all such vertices.

In general, for even $n=2m$, a most curvilinear prototile $n$-gon $P$ is a planar diagram $D$. If we glue the edges of $P$ according to $D$, then we get a single tile tiling ${\mc T}_D$ of a surface $S_D$. This tiling has several vertices $V_1,V_2,\dots,V_v$. For example, the planar diagram in Figure \ref{oct1} determines a single tile tiling of a surface, with two vertices $V_1={\color{red} 2_-5_-0_+}$ and $V_2={\color{red} 6_+4_+7_-3_-1_+}$. In particular, the surface has Euler number $2-4+1=-1$, which implies the surface $S_D=3{\bb P}^2$. 

The discussion for the octagon example shows that, if ${\mc T}$ is a tiling of a surface $S$ with $P$ as the (combinatorial) prototype, then the vertices of ${\mc T}$ are $V_1^{k_1},V_2^{k_2},\dots,V_v^{k_v}$, for some constant natural numbers $k_1,k_2,\dots,k_v$. Here the prototile is combinatorial because it satisfies the condition that the sum of angles in $V_i$ is $\frac{2\pi}{k_i}$. However, the vertices in the single tile tiling of $S_D$ have $k_i=1$, and the sum of angles in $V_i$ is $2\pi$. Therefore the prototiles in the tiling of $S$ may not be congruent to the prototile in the single tile tiling of $S_D$. They only have the same planar diagram $D$.

\section{Coverings of Tilings}
Let \(\mc T\) be a tiling of a closed, connected surface \(S\) equipped with a metric of constant curvature, and let \(q:\widehat S\to S\) be an unbranched covering map. We equip \(\widehat S\) with the pullback metric, which has the same constant curvature and makes \(q\) a local isometry. The tiling \(\mc T\) then lifts to a tiling \( \widehat{\mc T}\) of \(\widehat S\), with each lifted tile isometric to the corresponding tile of \(\mc T\). Thus a surface tiling gives rise to further surface tilings through unbranched coverings. From this viewpoint, there are three types of surface tilings:

\begin{itemize}
\item $S={\bb S}^2$ (sphere) or ${\bb P}^2$ (projective plane): The sphere is the double cover of the projective plane, and is the universal cover $\tilde{S}$.
\item $S={\bb T}^2$ (torus) or $2{\bb P}^2$ (Klein bottle): The torus covers torus and Klein bottle. The Klein bottle covers Klein bottle. The universal cover $\tilde{S}$ is the plane ${\bb R}^2$.
\item $S=g{\bb T}^2$ ($g\ge 2$) or $g{\bb P}^2$ ($g\ge 3$): The oriented surface $g{\bb T}^2$ covers $g'{\bb T}^2$ and $g'{\bb P}^2$. The nonorientable surface $g{\bb P}^2$ covers $g'{\bb P}^2$. The relation between $g$ and $g'$ is multiples of the Euler characteristic. The universal cover $\tilde{S}$ is the Poincar\'e disk ${\bb H}^2$.
\end{itemize}

The classification of surface tilings are therefore divided into three types: spherical, planar, and hyperbolic. In \cite{article5}, we classified edge-to-edge tilings of the sphere by curvilinear triangles. We will see that, tilings by congruent most curvilinear polygons are almost always hyperbolic.\\ 

Let \(\mc T\) be a tiling of a closed constant-curvature surface \(S\) by congruent most curvilinear polygons. We say that \(\mc T\) \emph{descends} to a tiling \(\mc T'\) of a closed constant-curvature surface \(S'\) if there exists an unbranched covering map
\[
q:S\longrightarrow S'
\]
that is a local isometry and maps each tile of \(\mc T\) isometrically onto a tile of \(\mc T'\), preserving the corresponding sides and corners.

The tiling $\mc T$ is called \emph{minimal} if, whenever $\mc T$ descends to
a tiling $\mc T'$ through an unbranched covering
\[
q:S\longrightarrow S',
\]
the covering $q$ has degree $1$.

Equivalently, $\mc T$ is minimal if it does not descend, through any
nontrivial unbranched covering, to a tiling with fewer tiles.

Due to the covering of tilings, the classification of surface tilings is actually the classification of  minimal tilings. The concept of minimal tiling is especially important for hyperbolic surface tilings, because a hyperbolic surface has infinitely many coverings with distinct genus. 

Tilings of surfaces are the quotients of tilings of the universal cover $\tilde{S}={\bb S}^2,{\bb R}^2,{\bb H}^2$. From this viewpoint, for a given (curvilinear) prototile $P$, we consider tilings $\tilde{\mc T}$ of the universal cover $\tilde{S}$ by congruent copies of $P$, such that the symmetry group $\Gamma$ of $\tilde{\mc T}$ has finitely many orbits. Then we find subgroups $G$ of $\Gamma$ of finite index, such that $G$ has no fixed points. Then the quotient $\tilde{\mc T}/G={\mc T}$ is a tiling of the quotient surface $\tilde{S}/G=S$.

Now we specialize to tilings ${\mc T}$ by tiles congruent to a most curvilinear polygon $P$ associated to a planar diagram $D$. We fix one tile $t$ in the universal cover $\tilde{\mc T}$. Let $t'$ be any other tile in $\tilde{\mc T}$. Since both $t$ and $t'$ are isometric to $P$, and $P$ has no self symmetry, there is a unique isometry $\gamma$ of $\tilde{S}$ taking $t$ to $t'$. 

Any vertex of $t$ is $V_i^{k_i}$. Since vertices are unique, the corresponding vertex of $t'$ is also $V_i^{k_i}$. The uniqueness of the vertex implies the isometry $\gamma$ between $t$ and $t'$ is actually an isometry between all the tiles at the vertex. Since this is true for all the vertices of $t$, we know $\gamma$ is an isometry between the first layer neighborhoods $N_1(t)$ and $N_1(t')$. Here $N_0(t)=t$, and the $n$-th layer neighborhood $N_n(t)$ means all the tiles sharing common edges or vertices with $N_{n-1}(t)$. Applying the same argument to tiles in $N_1(t)$ in place of $t$, we find $\gamma$ is an isometry between $N_2(t)$ and $N_2(t')$. Continuing the argument, we find $\gamma$ is an isometry of the universal tiling $\tilde{\mc T}$ to itself. In other words, we have $\gamma\in \Gamma$.

The argument above shows that the map
\[
\gamma\in \Gamma \to \gamma t\in \tilde{\mc T}
\]
is a one-to-one correspondence. Therefore $\tilde{\mc T}/\Gamma$ is the single tile tiling of the surface $S_D$ determined by the planar diagram. 

\begin{lemma}\label{lem:branched-cover}
Let $P$ be a combinatorial most curvilinear $2m$-gon with planar diagram
$D$, and let $\mc T_D$ be the associated single tile tiling
of $S_D$, with vertices $V_1,\ldots,V_v$.

\begin{enumerate}
\item
A tiling $\mc T$ of a surface $S$ by congruent most curvilinear
polygons with combinatorial prototile $P$ and vertex
types $V_1^{k_1},\ldots,V_v^{k_v}$ induces a canonical
branched covering $S\to S_D$, branched at the
vertices of $\mc T_D$, with local degree $k_i$ at every
point over $V_i$.

\item
Conversely, given a finite-degree branched covering
$f:S\to S_D$ with these local degrees and no other
branching, there is a constant-curvature metric on $S$
for which $f^{-1}(\mc T_D)$ is a tiling by congruent most
curvilinear polygons with combinatorial prototile $P$.
The induced map in part~1 is exactly $f$.
\end{enumerate}

The curvature in part~2 has the sign of
\[
    1-m+\sum_{i=1}^{v}\frac1{k_i}.
\]
\end{lemma}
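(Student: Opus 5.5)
For part~1 I would build the map tile by tile and then check it is a branched covering. Every tile $t$ of $\mc T$ is congruent to $P$. Since $P$ has no self-symmetry (all its sides are general and pairwise distinct), there is a unique identification $\phi_t:t\to P$ preserving side labels and corner labels. Compose $\phi_t$ with the quotient map $P\to S_D$ that glues $P$ according to $D$. Adjacent tiles meet along an edge whose two sides are $g_i$ and $\bar g_i$, matched with the arrows. So the two maps $\phi_t$ agree on the shared edge after passing to $S_D$, and the pieces glue to a continuous map $f:S\to S_D$.

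Away from the vertices, $f$ is a local homeomorphism: it is one on tile interiors, and along an edge the two half-neighbourhoods in $t$ and $t'$ go to the two half-neighbourhoods of the edge $g_i$ in $S_D$. At a vertex of type $V_i^{k_i}$ the corners around the vertex read the cyclic word $V_i$ exactly $k_i$ times. A small disk neighbourhood of the vertex is therefore mapped onto a disk neighbourhood of $V_i$ in $S_D$ by a map that is conjugate to $z\mapsto z^{k_i}$. So $f$ is a branched covering, branched only over the vertices of $\mc T_D$, with local degree $k_i$ at each point over $V_i$. The map is canonical because each $\phi_t$ is unique. This step uses the vertex analysis of Section~2 for the fact that the vertex types are $V_i^{k_i}$.

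For part~2, the combinatorial structure is immediate. The preimage $f^{-1}(\mc T_D)$ cuts $S$ into copies of $P$, since $P^\circ$ is simply connected and so lifts. The side labels pull back, and each vertex over $V_i$ has type $V_i^{k_i}$. The real work is to produce a constant-curvature metric in which all tiles are congruent. My plan is to put an orbifold metric on $S_D$ and pull it back.

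Let $O$ be the orbifold with underlying space $S_D$ and cone points of order $k_i$ at the $V_i$. Its orbifold Euler characteristic is
\[
\chi^{orb}(O)=\chi(S_D)-\sum_i\Bigl(1-\frac1{k_i}\Bigr)=(v-m+1)-v+\sum_i\frac1{k_i}=1-m+\sum_i\frac1{k_i}.
\]
The map $f$ is an orbifold covering from the smooth surface $S$ to $O$. This is exactly where the local degrees $k_i$ are used: they make the cone points resolve.

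The main obstacle is showing that $O$ is good, that is, that it carries a constant-curvature cone metric with cone angle $2\pi/k_i$ at $V_i$. I would argue that the existence of the finite orbifold cover $f$ by a manifold already implies $O$ is good. The classification of 2-orbifolds then gives a geometric structure whose curvature has the sign of $\chi^{orb}$, and this yields the stated sign formula.

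Cut $S_D$ along the edges of $\mc T_D$ to get a single polygon $P_0$ in this metric, with angle sums $2\pi/k_i$ at the vertex classes. The sides of $P_0$ are not geodesic in general, but that does not matter: they are just the curves through which the cutting was done. If some side $g_i$ happens to have an accidental symmetry, perturb the edges of $\mc T_D$ by a small isotopy fixing the vertices. Pulling back the metric makes $f$ a local isometry off the vertices, and a smooth constant-curvature metric at the vertices because the cone angle $2\pi/k_i$ is multiplied by $k_i$. Each tile is then isometric to $P_0$. Finally, applying part~1 to this tiling gives a map that agrees with $f$ on each tile, by the uniqueness of $\phi_t$.
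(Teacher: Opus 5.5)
Your proposal is correct and follows the paper's proof essentially step by step. For part~1 you glue the canonical tilewise maps and use the local model $z\mapsto z^{k_i}$; for part~2 you regard $S_D$ as the orbifold with cone orders $k_i$, deduce goodness from the manifold cover $f$, apply Thurston's geometrization with curvature sign $\chi_{\mathrm{orb}}=1-m+\sum_i 1/k_i$, perturb the edges slightly, and pull back the metric. The one detail to add, as the paper does, is that the perturbation should also make distinct edge pairs noncongruent, not just remove symmetries of individual sides. This ensures the lifted tiles are genuinely most curvilinear and have no self-symmetry, which your canonicity argument relies on.
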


\begin{proof}
For part~1, the canonical tilewise maps agree along
shared sides and are local homeomorphisms away from
vertices. At a vertex of type $V_i^{k_i}$, the local
model is $z\mapsto z^{k_i}$.

For part~2, regard $S_D$ as the orbifold $\mathcal O$
with cone order $k_i$ at $V_i$.
The local-degree assumption makes $f:S\to\mathcal O$
an orbifold covering with nonsingular source, so
$\mathcal O$ is good.
By Theorem 13.3.6 in \cite{ThurstonGT3M}, it admits a
constant-curvature structure whose curvature has the
sign of
\[
    \chi_{\mathrm{orb}}(\mathcal O)
    =\chi(S_D)-\sum_i(1-k_i^{-1})
    =1-m+\sum_i k_i^{-1}.
\]
Keeping the constant-curvature metric fixed, we slightly perturb the edges of the single-tile decomposition \(\mc T_D\), away from the cone points, so that they are of general type and distinct edge pairs are noncongruent if necessary. This preserves the planar diagram \(D\).  Its pullback is smooth on $S$, including over the cone
points, where the covering is locally
$\mathbb D\to\mathbb D/C_{k_i}$.
Since the open tile of $\mc T_D$ is simply connected and
contains no cone points, each lifted open tile maps
isometrically onto it under $f$.
Thus the lifted tiling consists of congruent tiles
with planar diagram $D$ and vertex types
$V_1^{k_1},\ldots,V_v^{k_v}$.
The canonical tilewise maps are the restrictions of
$f$, so the branched covering induced by this tiling
is the given map $f$.
\end{proof}

An immediate question is the following, which is equivalent to finding suitable branched covers of $S_D$.

\medskip

\noindent {\bf Question.} Suppose a planar diagram $D$ has vertices $V_1,V_2,\dots,V_v$, and $k_1,k_2,\dots,k_v$ are natural numbers. What is the necessary and sufficient condition for a surface to have a tiling by curvilinear polygons congruent to a prototile given by $D$, such that the vertices are  $V_1^{k_1},V_2^{k_2},\dots,V_v^{k_v}$?

\medskip

Let us determine the symmetry group $\Gamma$ for the planar diagram in Figure \ref{oct1}. On the left of Figure \ref{oct2}, we have the initial tile $t$. If $t_j$ shares common edge $a_i$ with $t$, and $a_i$ is {\em outside} $t$, then we denote isometry of $\tilde{S}$ that sends $t$ to $t_j$ simply by $\gamma=a_i$. This means
\[
t_0=a_1t,\quad
t_1=a_2t,\quad
t_3=a_3t,\quad
t_4=a_4t.
\]
The isometry $a_3$ carries eight tiles sharing edges with $t$ to eight tiles sharing edges with $t_3$. Then we see this isometry $a_3$ carries $t_6$ in $N_1(t)$ to $t$ in $N_1(t_3)$. Therefore we conclude $t=a_3t_6$. Then $t_6=a_3^{-1}t$, similarly, we obtain
\[
t_2=a_1^{-1}t,\quad
t_5=a_2^{-1}t,\quad
t_6=a_3^{-1}t,\quad
t_7=a_4^{-1}t.
\]
We see that the eight tiles sharing edges with $t$ are obtained by applying four isometries $a_1,a_2,a_3,a_4$, and their inverses $a_1^{-1},a_2^{-1},a_3^{-1},a_4^{-1}$ to $t$. 

We remark that, if the initial $t$ on the left of Figure \ref{oct2} is replaced by $\gamma t$ for some $\gamma\in \Gamma$, then the eight tiles around $\gamma t$ are obtained by the applying the conjugations by $\gamma$. For example, the three tiles $t_0,t_3,t_6$ become three tiles $\gamma a_1\gamma^{-1}(\gamma t)=\gamma a_1t$, $\gamma a_3\gamma^{-1}(\gamma t)=\gamma a_3t$, $\gamma a_3^{-1}\gamma^{-1}(\gamma t)=\gamma a_3^{-1}t$.

To get isometries that reach all the tiles in $N_1(t)$, we examine the isometries that relate the tiles at the vertices on the upper right of Figure \ref{oct2}. We assume the starting tile is $t$, and denote the subsequent tiles by $t_1,t_2,t_3,\cdots$. By the remark about the conjugation by $\gamma$, we have 
\[
t_1=a_4^{-1}t,\quad
t_2=a_2^{-1}a_4^{-1}t,\quad
t_3=a_1^{-1}a_2^{-1}a_4^{-1}t.
\]
The next tiles are obtained by successively applying $a_4^{-1},a_2^{-1},a_1^{-1}$ in cycles. Then the vertex $\bullet=({\color{red} 2_-5_-0_+})^k$ implies $t=(a_1^{-1}a_2^{-1}a_4^{-1})^kt$. By the one-to-one correspondence $\gamma\in \Gamma \to \gamma t\in \tilde{\mc T}$, this further implies $(a_1^{-1}a_2^{-1}a_4^{-1})^k=id$. 

The labels on the right of Figure \ref{oct1} lead to a dictionary $\delta_D$ on how to convert corners (decorated with signs $\pm$) at vertices to isometries.
\begin{center}
\footnotesize
\begin{tabular}{|c||c|c|c|c|c|c|c|c|}
\hline
sign & {\color{red} 0} 
& {\color{red} 1} 
& {\color{red} 2} 
& {\color{red} 3} 
& {\color{red} 4} 
& {\color{red} 5} 
& {\color{red} 6} 
& {\color{red} 7}  \\
\hline \hline
$+$ & $a_4^{-1}$ & $a_1$ & $a_2$ & $a_1^{-1}$ & $a_3$ & $a_4$ & $a_2^{-1}$ & $a_3^{-1}$  \\
\hline
$-$ & $a_1$ & $a_2$ & $a_1^{-1}$ & $a_3$ & $a_4$ & $a_2^{-1}$ & $a_3^{-1}$ & $a_4^{-1}$ \\
\hline
\end{tabular}
\end{center}

For example,
\[
\delta_D({\color{red} 2_-5_-0_+})=a_1^{-1}a_2^{-1}a_4^{-1}.
\]
Similarly,
\[
\delta_D({\color{red} 6_+4_+7_-3_-1_+})
=a_2^{-1}a_3a_4^{-1}a_3a_1.
\]
The vertex $\circ$ therefore gives the relation
\[
(a_2^{-1}a_3a_4^{-1}a_3a_1)^{k'}=id.
\]

By applying combinations of the isometries $a_1,a_2,a_3,a_4,a_1^{-1},a_2^{-1},a_3^{-1},a_4^{-1}$ to the initial tile $t$, we can get all the tiles in $\tilde{\mc T}$. This means the four isometries generate $\Gamma$. Moreover, the relations for the generators are the equalities obtained at all the vertices. We already obtain the relations $(a_1^{-1}a_2^{-1}a_4^{-1})^k=id$ and $(a_2^{-1}a_3a_4^{-1}a_3a_1)^{k'}=id$ from the vertices of $t$. The relations obtained at the other vertices are the conjugations of these two relations. Therefore these two are the only relations, and we get
\[
\Gamma
=\langle a_1,\; a_2,\; a_3,\; a_4\;  |\;  (a_1^{-1}a_2^{-1}a_4^{-1})^k,\; (a_2^{-1}a_3a_4^{-1}a_3a_1)^{k'}\rangle.
\]

In general, for any most curvilinear $2m$-gon prototile $P$ associated to a planar diagram $D$, let $V_1,V_2,\dots,V_v$ be the vertices of the single tile tiling ${\mc T}_D$. Then we can derive a dictionary $\delta$ that converts the decorated corners ${\color{red} i_{\pm}}$ into isometries $j$ or $j^{-1}$.

\begin{lemma}\label{symmetry}
Let ${\mc T}$ be a tiling of $S$ by congruent copies of $P$, with vertices $V_1^{k_1},V_2^{k_2},\dots,V_v^{k_v}$. Then the universal cover tiling $\tilde{\mc T}$ of $\tilde{S}$ has the symmetry group
\[
\Gamma
=\langle a_1,a_2,\dots,a_m\; | \; 
\delta(V_1)^{k_1},\delta(V_2)^{k_2},\dots,\delta(V_v)^{k_v}\rangle.
\]
\end{lemma}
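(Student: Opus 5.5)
The plan is to identify $\Gamma$ with the orbifold fundamental group of $S_D$ with cone points of order $k_i$ at $V_i$, and to read off the presentation using the tile $t$ as a fundamental domain. By the discussion preceding Lemma~\ref{lem:branched-cover}, $\gamma\mapsto\gamma t$ is a bijection $\Gamma\to\tilde{\mc T}$, so $\Gamma$ acts simply transitively on tiles and $t$ is a strict fundamental domain. The $2m$ sides of $t$ are paired by $a_1^{\pm1},\dots,a_m^{\pm1}$, where $a_i$ sends $t$ to the tile across the outer copy of the side labelled $a_i$, exactly as in the octagon example. I would then apply the Poincar\'e polygon theorem, or equivalently the standard fundamental-domain presentation of a group acting properly discontinuously with a strict fundamental domain. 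This gives generators the side pairings, together with one relation per vertex cycle of $t$: the cycle transformation raised to the order of its stabilizer.

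The proof then has three steps. First, \emph{generation}. The tile graph of $\tilde{\mc T}$, with tiles adjacent when they share an edge, is connected because $\tilde S$ is connected. If $\gamma t$ and $\gamma' t$ are adjacent, the conjugation remark shows $\gamma'=\gamma a_i^{\pm1}$. Walking along a path from $t$ therefore writes any $\gamma$ as a word in the $a_i^{\pm1}$.

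Second, \emph{the relations hold}. For each vertex class $V_i$ of $\mc T_D$, go around a vertex of $t$ of type $V_i^{k_i}$. Successive tiles are obtained by right-multiplying by the letters of $\delta(V_i)$. This is the dictionary computation illustrated for $\bullet$ and $\circ$, and its general form is just the bookkeeping of which labelled side is crossed at each decorated corner. After $k_i$ cycles we return to $t$, so $\delta(V_i)^{k_i}t=t$, and simple transitivity gives $\delta(V_i)^{k_i}=1$. Hence there is a surjection $\Phi:\widehat\Gamma=\langle a_1,\dots,a_m\mid \delta(V_i)^{k_i}\rangle\to\Gamma$.

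Third, \emph{injectivity}, which I expect to be the main obstacle. I would build a tiling $X$ out of $\widehat\Gamma\times P$: glue $(g,P)$ to $(ga_j,P)$ along the sides labelled $a_j$, following $D$. This $X$ is a $\widehat\Gamma$-equivariant complex. Its links at vertices are cycles of length $k_i\cdot|V_i|$, precisely because of the relations; without them the tiles around a vertex would spiral infinitely. So $X$ is a surface, and it carries a free $\widehat\Gamma$-action on tiles. Its 2-skeleton is the Cayley complex of $\widehat\Gamma$ with each relator disk $\delta(V_i)^{k_i}$ collapsed to a vertex of valence $k_i|V_i|$. That Cayley complex is simply connected, so $X$ is simply connected.

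The map $\Phi$ then induces a tile-respecting map $X\to\tilde S$, $(g,P)\mapsto \Phi(g)t$. This map is a local homeomorphism, including at vertices, since both sides have the same cyclic arrangement of $k_i$ repetitions of $V_i$. A local homeomorphism between these surfaces, with $X$ a complete tiled surface, is a covering. Since $\tilde S$ is simply connected, the map is a homeomorphism, so $\Phi$ is injective. The point needing care is the local structure at vertices. There one must use that all vertices of $\tilde{\mc T}$ of type $V_i$ have the same multiplicity $k_i$, which was established earlier from the angle sum.
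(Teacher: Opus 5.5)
Your proof is correct. Steps 1 and 2 are the paper's own argument, from the octagon discussion preceding the lemma: generation by walking through edge-adjacent tiles with the conjugation remark, and the relations $\delta(V_i)^{k_i}=1$ read off with the dictionary and simple transitivity.

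The real difference is how you show there are no further relations. The paper only observes that relations at other vertices are conjugates of those at the vertices of $t$, and concludes ``these are the only relations.'' It never explains why every relation in $\Gamma$ is a consequence of vertex relations, which is exactly where simple connectivity of $\tilde S$ must enter. Your development argument supplies this step:
\begin{itemize}
\item build $X$ from $\widehat\Gamma\times P$;
\item map it to $\tilde S$ by $(g,P)\mapsto\Phi(g)t$;
\item show this map is a covering;
\item use $\pi_1(\tilde S)=1$ to conclude that $\Phi$ is injective.
\end{itemize}
This is essentially Poincar\'e's polygon theorem for curvilinear tiles. The paper's version is short but leaves the completeness of the presentation unproved. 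Yours costs some covering-space bookkeeping but actually establishes it.

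Three points need tightening.
\begin{enumerate}
\item $X$ is not ``the Cayley complex with relator disks collapsed.'' Rather, the \emph{dual} cell structure on $X$ has the Cayley graph of $\widehat\Gamma$ as $1$-skeleton, with one $2$-cell bounding each relator cycle $g\,\delta(V_i)^{k_i}$. (The Cayley complex has $k_i$ coincident such cells, which does not change $\pi_1$.) This duality is why $X$ is simply connected.
\item The relations alone only show that the walk around a vertex of $X$ closes after at most $k_i|V_i|$ steps. That it does not close earlier follows from $\Phi$: the element $\Phi(\delta(V_i))$ has order exactly $k_i$ in $\Gamma$. You need this exact link length for the local homeomorphism at vertices.
\item A local homeomorphism between surfaces need not be a covering, so say why this one is. One option: the map sends each vertex star of $X$ homeomorphically onto a vertex star of $\tilde{\mc T}$, which gives evenly covered neighbourhoods. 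Another: the pulled-back metric on $X$ is complete, since $\widehat\Gamma$ acts isometrically with quotient a single tile.
\end{enumerate}
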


Any closed and connected surface is one of ${\bb S}^2$, $g{\bb T}^2$, $g{\bb P}^2$. One way to prove this fact is to use cut and paste to modify general planar diagrams to standard planar diagrams for the specific surfaces. The operation can also be applied to most curvilinear tilings. On the left of Figure \ref{cutpaste}, we cut the octagonal prototile $P$ into two pieces along a general curvilinear line $a_{3'}$ connecting the corners {\color{red} 2} and {\color{red} 5}. Then we glue the two pieces together along the edge pair 3 to form a new prototile $P'$. The edge pair $a_3$ becomes a line in the interior of $P'$, and the line $a_{3'}$ becomes an edge pair for $P'$. Due to the rigidity of the tiling ${\mc T}$ with prototile $P$, applying the cut and paste operation to ${\mc T}$ gives a tiling ${\mc T}'$ with prototile $P'$. Two tiles of ${\mc T}'$ are indicated by shaded regions in the picture.

\begin{figure}[htp]
\centering
\begin{tikzpicture}[>=latex,scale=1]

\fill[gray!30]
	(-0.48,1.2071) -- ++(0:0.98) -- ++(-45:1) -- ++(-90:1) -- ++(0:1) -- ++(225:2.414) -- ++(90:1) -- ++(180:0.98)
	(-0.52,1.2071) -- ++(225:0.97) -- ++(135:1) -- ++(180:1) -- ++(225:1) -- ++(-45:2.414) -- ++(45:1) -- ++(-45:0.97);

\foreach \u/\v/\w/\x in 
	{0/1/0/0, 4/-1/1/1, 1/1/4/1, -1/1/-1/1}
{
\begin{scope}[shift={(45*\w:1.2071*2*\x)}, rotate=45*\u,  xscale=\v]

\foreach \a in {0,...,7}
\draw
	(-22.5+45*\a:1.3066) -- (22.5+45*\a:1.3066);

\foreach \a in {0,...,7}
\node[red] at (22.5+45*\a:1.1) {\scriptsize \a};

\foreach \u/\a in 
	{1/0, 2/1, 3/3, 4/4}
\node at (45+45*\a:1.41) {\scriptsize $a_{\u}$};

\foreach \u/\a in 
	{1/2, 2/5, 3/6, 4/7}
\node at (45+45*\a:1) {\scriptsize $a_{\u}$};

\foreach \a in {0,1,2,3,4,7}
\draw[rotate=45+45*\a, ->]
	(1.2071,0) -- ++(0,0.1);

\foreach \a in {5,6}
\draw[rotate=45+45*\a, ->]
	(1.2071,0) -- ++(0,-0.1);

\foreach \a in {0,2,5}
\fill
	(22.5+45*\a:1.3066) circle (0.05);

\foreach \a in {1,3,4,6,7}
\filldraw[fill=white]
	(22.5+45*\a:1.3066) circle (0.05);

\draw[dashed]
	(112.5:1.3066) -- (247.5:1.3066);

\node at (-0.7,0) {\scriptsize $a_{3'}$};
	
\end{scope}
}

\begin{scope}[shift={(5.5cm,1.6cm)}]

\draw
	(1,0.6) -- (1,-0.6) 
	(-1,0.6) -- (-1,-0.6) 
	(0,1) -- (0,-1);

\draw[dashed]
	(1,0.6) -- (0,1) -- (-1,0.6)
	(1,-0.6) -- (0,-1) -- (-1,-0.6);

\foreach \a in {1, -1, 0}
\draw[->]
	(\a,0) -- ++(0,0.1);

\node at (-1.2,0) {\scriptsize $a_i$};
\node at (0.8,0) {\scriptsize $a_i$};
\node at (-0.2,0) {\scriptsize $a_{i'}$};
\node at (1.5,0) {\scriptsize $P$};

\end{scope}

\draw[very thick, ->]
	(5.5,0.3) -- ++(0,-1);

\begin{scope}[shift={(5.5cm,-1.6cm)}]

\draw
	(1,1) -- (1,-1) 
	(-1,1) -- (-1,-1) 
	(0,0.6) -- (0,-0.6);

\draw[dashed]
	(1,1) -- (0,0.6) -- (-1,1)
	(1,-1) -- (0,-0.6) -- (-1,-1);

\foreach \a in {1, -1, 0}
\draw[->]
	(\a,0) -- ++(0,0.1);

\node at (-1.2,0) {\scriptsize $a_{i'}$};
\node at (0.8,0) {\scriptsize $a_{i'}$};
\node at (-0.2,0) {\scriptsize $a_i$};
\node at (1.5,0) {\scriptsize $P'$};

\end{scope}

\end{tikzpicture}
\caption{Cut and paste.}
\label{cutpaste}
\end{figure}

The right of Figure \ref{cutpaste} is the general cut and paste operation. We cut $P$ into two pieces along a general curvilinear line $a_{i'}$ connecting two corners of $P$, so that an edge pair appears on two sides of $a_{i'}$. Then we glue the two pieces along the edge pair $a_i$ to form a new prototile $P'$, with the edge pair $a_i$ replaced by the edge pair $a_{i'}$. Then rigidity implies that a tiling ${\mc T}$ with prototile $P$ becomes a tiling ${\mc T}'$ with prototile $P'$. Since cut and paste can be reversed, i.e., we can get $P$ from $P'$ by using cut and paste, we know that tilings with prototile $P$ are equivalent to tilings with prototile $P'$. 

We remark that we need to reassign circularly cyclic labels to the corners of $P'$. Moreover, the locations of the vertices on the surface remain the same. However, the corners at the vertices are divided and combined. Therefore, the corner combinations at vertices are changed.

The equivalence of the tilings in the following result is achieved by repeated use of cut and paste. 

\begin{lemma}\label{standard}
Any tiling by congruent most curvilinear polygons of even number of sides is equivalent to a tiling by congruent most curvilinear polygons associated to one of the following standard planar diagrams:
\begin{itemize}
\item $a_1b_1a_1^{-1}b_1^{-1}\cdots a_gb_ga_g^{-1}b_g^{-1}c_1c_1^{-1}\cdots c_pc_p^{-1}$.
\item $a_1a_1\cdots a_ga_gc_1c_1^{-1}\cdots c_pc_p^{-1}$.
\end{itemize}
\end{lemma}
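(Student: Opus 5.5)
The plan is to imitate the classical proof of the classification of closed surfaces by cut and paste on planar diagrams. The cut-and-paste operation of Figure~\ref{cutpaste} preserves tilings. A tiling with prototile $P$ becomes a tiling with prototile $P'$, and the number of sides stays $2m$ because one edge pair is exchanged for another. It suffices, then, to show that any planar diagram of a $2m$-gon can be transformed into one of the two standard words by a finite sequence of such moves, plus cyclic relabelling and renaming of letters. Two checks are needed. First, each classical move is realized as a single cut along a curve $a_{i'}$ joining two corners of $P$, followed by a paste along an existing pair $a_i$. Second, the new cut curve can be chosen of general type and non-congruent to the other sides, so the new prototile is again most curvilinear. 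The second point only requires choosing a generic curve, as in the perturbation in the proof of Lemma~\ref{lem:branched-cover}.

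The reduction steps, in order, are the following.

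\textbf{Step 1 (collect twisted pairs).} While some letter $x$ occurs twisted, $\cdots x\,A\,x\cdots$, cut along a diagonal that isolates the first occurrence and the word $A$, then paste along $x$. This produces an adjacent pair $yy$ and conjugates the rest; it is the standard ``cross-cap normalization'' move. Each application increases the number of adjacent twisted pairs.

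\textbf{Step 2 (collect handles).} If no twisted pair remains but some opposing pairs $x$ and $y$ are interlinked, as in $\cdots x\cdots y\cdots x^{-1}\cdots y^{-1}\cdots$, two cuts produce the commutator block $aba^{-1}b^{-1}$.

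\textbf{Step 3 (handles become cross-caps).} If both a handle and a cross-cap are present, use the usual move $aba^{-1}b^{-1}cc \sim ddeeff$ to convert handles, which gives the second standard form.

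At this point the difference from the classical proof appears: we cannot cancel adjacent pairs $xx^{-1}$, since the number of sides must be kept. The remaining opposing pairs that are not interlinked with anything form nested or adjacent patterns. I will show they can be moved by cut and paste into the consecutive form $c_1c_1^{-1}\cdots c_pc_p^{-1}$. For a pair $x B x^{-1}$ in which $B$ is a closed subword, a cut along the diagonal joining the two ends of $B$ followed by a paste along $x$ replaces it with $y y^{-1}B'$. Inducting on the nesting depth then gives the required consecutive form.

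I expect the main obstacle to be this last bookkeeping step. It has no analogue in the usual proof, because there the pairs $xx^{-1}$ are simply deleted. The care is needed in two places: the moves must be realizable without degenerate diagonals, meaning the two corners being joined must be distinct corners of $P$, and the moves of Steps 1--3 must not recreate nested opposing pairs. I would handle this by first performing Step 4 to push all trivial pairs to the end of the word, and then running Steps 1--3 on the remaining block while treating the tail $c_1c_1^{-1}\cdots$ as inert. The case with $g=0$ and the sphere case, where the word contains only $c$'s, fall out directly.
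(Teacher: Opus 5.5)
There is a genuine gap, in the step you flag as the main obstacle: separated opposing pairs that are not interlinked with any other pair.

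\textbf{Your un-nesting move is not a valid cut and paste, and no single move can work.} The move you propose for $xBx^{-1}C$ is not a cut and paste in the sense of Figure~\ref{cutpaste}. The diagonal joining the two ends of $B$ cuts off a piece bounded by $B$ alone, so both copies of $x$ lie on the other piece and cannot be glued. If you instead cut from the initial corner of $x$ to the terminal corner of $B$ and glue along $x$, you get $B\,y^{-1}C\,y$. This is again $yBy^{-1}C$ up to rotation, so nothing is un-nested.

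To see that no single move does the job, take $x\,aa\,x^{-1}\,bb$. It is a Klein bottle diagram with two vertices, each of degree $3$, and no trivial pairs. Your preliminary pushing of $cc^{-1}$ blocks and your Steps~1--3 therefore leave it unchanged. Now count degrees: a cut between corners $Q,R$ followed by gluing along a pair adds one corner at each of $Q$ and $R$, and removes one corner at each end of the glued pair. To bring a degree-$3$ vertex down to degree $1$, you would have to glue along a pair with both ends at that vertex, namely $a$ or $b$. The cut would have to separate its two copies while avoiding that vertex. This is impossible, because the two copies are adjacent at a corner of that very vertex. Yet the target $a_1a_1a_2a_2c_1c_1^{-1}$ has a degree-$1$ vertex in the middle of $c_1c_1^{-1}$. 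So vertex degrees must be redistributed, and an induction on nesting depth using a one-step un-nesting move cannot succeed.

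\textbf{The missing idea is the analogue of the classical reduction to a single vertex, which is the paper's first step.} Suppose $V_s$ is joined by an edge $a$ to a different vertex $V_t$, and $b$ is the neighbour of $a$ at $V_s$. Cut off the triangle spanned by $a$ and $b$ along a diagonal $c$, and glue along the other copy of $b$ (Figure~\ref{cutpaste2}). This lowers the degree of $V_s$ by one and raises that of $V_t$ by one. Repeating, all vertices but one, $V_v$, reach degree $1$, i.e.\ they become middle corners of blocks $cc^{-1}$.

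After this, every other pair has both ends at $V_v$. Consider a separated opposing pair $a\,X\,a^{-1}\,Y$. If no letter of $X$ were paired with a letter of $Y$, the corners of $X$ would form a set closed under the side identifications and disjoint from the corners of $Y$. Then the two ends of $a$, both at $V_v$, would lie at different vertices, a contradiction. Hence every separated opposing pair is interlinked, and your cross-cap and handle moves leave no nested pairs. The remaining adjacent blocks $cc^{-1}$ are then collected by the exchange $u\,cc^{-1}X\to dd^{-1}u\,X$.

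Your other ingredients are fine: preservation of tilings and of most-curvilinearity under cut and paste, the cross-cap and handle moves, and the handle-to-cross-cap conversion. What your plan lacks is this control of vertex degrees.
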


\begin{proof}
The proof is a variation of the proof for the classification of surfaces. We say corners of $D$ and vertices of ${\mc T}_D$. The set of corners is divided into a disjoint union of subsets $V_1,V_2,\cdots,V_v$, which are exactly the vertices of the single tile tiling ${\mc T}_D$. We label the corners by the corresponding vertex $V_i$ of ${\mc T}_D$. The number of appearances of a vertex $V_i$ in the planar diagram is the degree of $V_i$. If the degree of $V_i$ is $1$, then the corresponding corner appears in the planar diagram only once. This is equivalent to that it is the middle corner in a segment $cc^{-1}$ of the planar diagram. See $V_1,V_2,V_3$ in Figure \ref{cutpaste1}, which are the middle corners of $c_1c_1^{-1},c_2c_2^{-1},c_3c_3^{-1}$.

The first step is to use cut and paste to reduce the degrees of all but one $V_i$ to become $1$. Suppose that the degrees of $V_1,V_2,\dots,V_{s-1}$ are already $1$. If $V_s$ is only connected by edges to $V_1,V_2,\dots,V_{s-1},V_s$, then starting with one corner $V_s$ and traversing around a planar diagram, we successively get $V_s,V_s,\dots,V_s,V_j$, with $j<s$. Since the degree of $V_j$ is $1$, we know that $V_s$ is connected to $V_j$ through an edge $a$, and the subsequent edge is $a^{-1}$ connecting $V_j$ to $V_s$. Therefore, we get the sequence $V_s,V_s,\dots,V_s,V_j,V_s,\dots,V_s,V_{j'},V_s,\dots$, with $j,j',\cdots <s$. See Figure \ref{cutpaste1}. This implies $V_1,V_2,\dots,V_{s-1},V_s$ are all the vertices, and $V_s$ is the only vertex that can have degree $>1$. This is exactly what we try to achieve.

\begin{figure}[htp]
\centering
\begin{tikzpicture}[>=latex,scale=1]

\draw
	(0,0) -- (8,0);

\foreach \a in {0,4,6}
\draw[->]
	(0.5+\a,0) -- ++(0.1,0);

\foreach \a in {1,2,3,5,7}
\draw[->]
	(0.5+\a,0) -- ++(-0.1,0);

\foreach \a in {0,2,3,4,6,8}
\fill
	(\a,0) circle (0.05);

\foreach \a in {1,5,7}
\filldraw[fill=white]
	(\a,0) circle (0.05);

\foreach \a in {0,2,3,4,6,8}
\node at (\a,0.2) {\scriptsize $V_s$};

\draw[dashed]
	(0,0) to[out=-60, in=180] (4,-1.5) to[out=0, in=240] (8,0);

\begin{scope}[font=\scriptsize]

\node at (0.5,-0.2) {$c_3$};
\node at (1.5,0.2) {$c_3$};
\node at (2.5,-0.2) {$a_1$};
\node at (3.5,0.2) {$a_1$};
\node at (4.5,-0.2) {$c_2$};
\node at (5.5,0.2) {$c_2$};
\node at (6.5,-0.2) {$c_1$};
\node at (7.5,0.2) {$c_1$};

\foreach \a/\b in {1/3, 5/2, 7/1}
\node at (\a,0.2) {$V_{\b}$};

\end{scope}

\node at (4, -0.8) {$P$};

\end{tikzpicture}
\caption{$V_s$ connected to $V_1,V_2,V_3$.}
\label{cutpaste1}
\end{figure}

Therefore we may assume $V_s$ is connected to $V_t$ by an edge $a$, with $t>s$. Suppose the $V_s$ end of $a$ is connected to an edge $b$. Then another $b$ appears in the planar diagram as on the left of Figure \ref{cutpaste2}. The dashed part of the planar diagram are combinations of edges. The two pictures correspond to the cases that the $b$ pair is opposing or twisted.

\begin{figure}[htp]
\centering
\begin{tikzpicture}[>=latex,scale=1]

\foreach \y in {0,1}
{
\begin{scope}[yshift=-2.5*\y cm]

\draw
	(0,0.7) -- (1.4,0.7) -- (1.4,-0.7) -- (0,0.7)
	(0,-0.7) -- (-1.4,-0.7);

\draw[dashed]
	(0,0.7) -- (-1.4,-0.7)
	(0,-0.7) -- (1.4,-0.7);

\draw[->]
	(0.7,0.7) -- ++(-0.1,0);
\draw[->]
	(1.4,0) -- ++(0,0.1);
\draw[->]
	(0.7,-0.7) -- ++(0.1,0);
\draw[->]
	(0.7,0) -- ++(-0.1,0.1);
\draw[->]
	(-0.7,0) -- ++(-0.1,-0.1);

\begin{scope}[font=\scriptsize]

\node at (0.7,0.9) {$a$};
\node at (1.6,0) {$b$};	
\node at (-0.7,-0.5) {$b$};
\node at (0.85,0.1) {$c$};
\node at (-0.85,0.1) {$x$};
\node at (0.7,-0.9) {$y$};

\node at (0,0.9) {$V_t$};
\node at (1.55,0.85) {$V_s$};
\node at (1.55,-0.85) {$X$};

\end{scope}

\draw[->, very thick]
	(2.2,0) -- ++(1,0);

\begin{scope}[xshift=5cm]

\draw
	(0,-0.7) -- (-1.4,0.7) -- (0,0.7) -- (0,-0.7)
	(1.4,0.7) -- (1.4,-0.7);

\draw[dashed]
	(0,0.7) -- (1.4,0.7)
	(0,-0.7) -- (1.4,-0.7);
	
\draw[->]
	(-0.7,0.7) -- ++(-0.1,0);
\draw[->]
	(0,0) -- ++(0,0.1);
\draw[->]
	(-0.7,0) -- ++(-0.1,0.1);

\begin{scope}[font=\scriptsize]

\node at (-0.7,0.9) {$a$};
\node at (0.2,0) {$b$};
\node at (-0.55,0.1) {$c$};
\node at (1.6,0) {$c$};

\node at (0,0.9) {$V_s$};
\node at (-1.55,0.85) {\scriptsize $V_t$};
\node at (0,-0.9) {$X$};

\end{scope}

\end{scope}

\end{scope}
}

\draw[->]
	(-0.7,-0.7) -- ++(-0.1,0);

\node at (0.1,-0.9) {\scriptsize $X$};
\node at (-1.55,-0.85) {\scriptsize $V_s$};

\begin{scope}[xshift=5cm]

\draw[->]
	(1.4,0) -- ++(0,0.1);
\draw[->]
	(0.7,0.7) -- ++(-0.1,0);
\draw[->]
	(0.7,-0.7) -- ++(0.1,0);

\node at (0.7,0.9) {\scriptsize $x$};	
\node at (0.7,-0.9) {\scriptsize $y$};
\node at (1.55,0.85) {\scriptsize $V_t$};
\node at (1.55,-0.85) {\scriptsize $X$};

\end{scope}

\begin{scope}[yshift=-2.5cm]

\draw[->]
	(-0.7,-0.7) -- ++(0.1,0);

\node at (0.1,-0.9) {\scriptsize $V_s$};
\node at (-1.55,-0.85) {\scriptsize $X$};

\begin{scope}[xshift=5cm]

\draw[->]
	(1.4,0) -- ++(0,-0.1);
\draw[->]
	(0.7,0.7) -- ++(0.1,0);
\draw[->]
	(0.7,-0.7) -- ++(-0.1,0);
	
\node at (0.7,0.9) {\scriptsize $y$};	
\node at (0.7,-0.9) {\scriptsize $x$};
\node at (1.55,0.85) {\scriptsize $X$};
\node at (1.55,-0.85) {\scriptsize $V_t$};

\end{scope}

\end{scope}

\end{tikzpicture}
\caption{Reduce the degree number of $V_s$.}
\label{cutpaste2}
\end{figure}

We cut the planar diagram into two pieces along the edge $c$, and glue the two pieces together along the edge $b$. We get the planar diagrams on the right. We find that the degree of $V_s$ is decreased by one, which is balanced by that the degree of $V_t$ is increased by one. The degrees of the other vertices remain the same. By repeating the process, we eventually get a planar diagram with degree $1$ vertices $V_1,V_2,\cdots,V_{v-1}$ and one more vertex $V_v$. We note that $V_1,V_2,\cdots,V_{v-1}$ are the middle points of segments $cc^{-1}$.

The second step is to bring all separated and twisted edge pairs together. Since such a pair $a$ is not $aa^{-1}$,  the end points of $a$ are $V_v$. We get the left of Figure \ref{cutpaste3}, where the segments $x$ and $y$ are not empty. The cut and paste along $c$ gives the right of Figure \ref{cutpaste3}, which is $ccxy^{-1}$. The separated pair $a$ becomes the non-separated pair $c$. 

\begin{figure}[htp]
\centering
\begin{tikzpicture}[>=latex,scale=1]

\draw[dashed]
	(-0.7,-0.7) -- (0.7,-0.7)
	(-0.7,0.7) -- (0.7,0.7);
	
\draw
	(-0.7,0.7) -- (-0.7,-0.7) -- (0.7,0.7) -- (0.7,-0.7);

\foreach \a in {0,...,3}
\draw[->, rotate=90*\a]
	(0.7,0) -- ++(0,0.1);

\draw[->]
	(0,0) -- ++(0.05,0.05);
	
\begin{scope}[font=\scriptsize]

\node at (0.9,0) {$a$};
\node at (-0.9,0) {$a$};
\node at (-0.15,0.15) {$c$};
\node at (0,0.9) {$x$};
\node at (0,-0.9) {$y$};

\foreach \a in {1,-1}
\foreach \b in {1,-1}
\node at (0.85*\a,0.85*\b) {$V_v$};

\end{scope}

\draw[->, very thick]
	(1.8,0) -- ++(1,0);

\begin{scope}[xshift=5cm]

\draw
	(-1.4,0.7) -- (0,-0.7) -- (1.4,0.7) 
	(0,-0.7) -- (0,0.7);

\draw[dashed]
	(-1.4,0.7) -- (1.4,0.7);

\draw[->]
	(0.7,0) -- ++(0.05,0.05);
\draw[->]
	(-0.7,0) -- ++(0.05,-0.05);
\draw[->]
	(0,0) -- ++(0,-0.1);
\draw[->]
	(0.7,0.7) -- ++(-0.1,0);
\draw[->]
	(-0.7,0.7) -- ++(0.1,0);
			
\begin{scope}[font=\scriptsize]

\node at (-0.2,0) {$a$};
\node at (0.55,0.15) {$c$};
\node at (-0.85,-0.15) {$c$};
\node at (0.7,0.9) {$x$};
\node at (-0.7,0.9) {$y$};

\node at (-1.55,0.85) {$V_v$};
\node at (1.55,0.85) {$V_v$};
\node at (0,0.9) {$V_v$};
\node at (0,-0.9) {$V_v$};

\end{scope}	

\end{scope}

\end{tikzpicture}
\caption{Bring separated twisted edge pair $a$ together.}
\label{cutpaste3}
\end{figure}

The third step is to bring separated and opposing edge pairs together. Again the end points of such a pair $a$ are $V_v$, and we get the left of Figure \ref{cutpaste4}, where the segments $x$ and $y$ are not empty. Since $V_v$ are the end points of $x$ and $y$, there must be an edge pair $b$ with one $b$ in $x$ and another $b$ in $y$. Since all twisted pairs have been brought together, the pair $b$ is opposing. Then we get the second picture. Cut and paste along $c$ gives the third planar diagram, in which $x'$ is a combination of $x_1$ and $x_2$, and $y'$ is a combination of $y_1$ and $y_2$. Then cut and paste again along $d$ gives the fourth planar diagram $cdc^{-1}d^{-1}y'x'$. 

\begin{figure}[htp]
\centering
\begin{tikzpicture}[>=latex,scale=1]


\draw[dashed]
	(-0.7,-0.7) -- (0.7,-0.7)
	(-0.7,0.7) -- (0.7,0.7);
	
\draw
	(-0.7,0.7) -- (-0.7,-0.7) 
	(0.7,0.7) -- (0.7,-0.7);

\foreach \a in {-1,0,1}
\draw[->, rotate=90*\a]
	(0.7,0) -- ++(0,0.1);

\draw[->]
	(-0.7,0) -- ++(0,0.1);
		
\begin{scope}[font=\scriptsize]

\node at (0.9,0) {$a$};
\node at (-0.5,0) {$a$};
\node at (0,0.9) {$x$};
\node at (0,-0.9) {$y$};

\foreach \a in {1,-1}
\foreach \b in {1,-1}
\node at (0.85*\a,0.85*\b) {$V_v$};

\end{scope}

\draw[->, very thick]
	(1.2,0) -- ++(0.6,0);
	

\begin{scope}[xshift=3.4cm]

\foreach \a in {0,...,3}
\draw[rotate=90*\a]
	(-22.5:1.3) -- (22.5:1.3);

\foreach \a in {0,...,3}
\draw[dashed, rotate=45+90*\a]
	(-22.5:1.3) -- (22.5:1.3);

\draw
	(-67.5:1.3) -- (67.5:1.3);
	
\foreach \a in {1,-1}
{
\draw[->]
	(1.2*\a,0) -- ++(0,0.1);
\draw[->]
	(0,1.2*\a) -- ++(-0.1,0);
}

\foreach \a in {0,...,3}
\draw[->, rotate=45+90*\a]
	(1.2,0) -- ++(0,0.1);

\draw[->]
	(0.5,0) -- ++(0,-0.1);
	
\begin{scope}[font=\scriptsize]

\node at (1.4,0) {$a$};
\node at (-1,0) {$a$};
\node at (0,1.4) {$b$};
\node at (0,-1) {$b$};
\node at (0.3,0) {$c$};
\node at (45:1.4) {$x_1$};
\node at (135:1.4) {$x_2$};
\node at (225:1.4) {$y_1$};
\node at (-45:1.4) {$y_2$};

\end{scope}	

\end{scope}

\draw[->, very thick]
	(5.1,0) -- ++(0.6,0);


\begin{scope}[xshift=7.1cm]

\draw
	(0,0.7) -- (1,0.7) -- (1,-0.7) -- (0,-0.7)
	(-1,0.7) -- (-1,-0.7)
	(1,0.7) -- (-1,-0.7);

\draw[dashed]
	(-1,0.7) -- (0,0.7) 
	(-1,-0.7) -- (0,-0.7);

\foreach \a in {1,-1}
{
\draw[->]
	(1*\a,0) -- ++(0,-0.1);
\draw[->]
	(0.5,0.7*\a) -- ++(-0.1,0);	
\draw[->]
	(-0.5,0.7*\a) -- ++(-0.1*\a,0);	
}

\draw[->]
	(0,0) -- ++(0.1,0.07);	
	
\begin{scope}[font=\scriptsize]

\node at (0.5,0.9) {$b$};
\node at (0.5,-0.5) {$b$};
\node at (-1.2,0) {$c$};
\node at (0.8,0) {$c$};
\node at (-0.5,0.9) {$x'$};
\node at (-0.5,-0.9) {$y'$};
\node at (0.1,-0.15) {$d$};
	
\end{scope}

\end{scope}

\draw[->, very thick]
	(8.6,0) -- ++(0.6,0);


\begin{scope}[xshift=10.7cm]

\draw
	(-1,0.7) -- (-1,-0.7) -- (1,-0.7) -- (1,0.7)
	(0,0.7) -- (0,-0.7);

\draw[dashed]
	(-1,0.7) -- (1,0.7);

\foreach \a in {1,-1}
{
\draw[->]
	(1*\a,0) -- ++(0,0.1*\a);
\draw[->]
	(0.5*\a,-0.7) -- ++(0.1,0);	
\draw[->]
	(0.5*\a,0.7) -- ++(-0.1,0);	
}

\draw[->]
	(0,0) -- ++(0,0.1);	

\begin{scope}[font=\scriptsize]

\node at (0.2,0) {$b$};
\node at (-1.2,0) {$c$};
\node at (0.8,0) {$d$};
\node at (-0.5,-0.9) {$d$};
\node at (0.5,-0.5) {$c$};
\node at (-0.5,0.9) {$x'$};
\node at (0.5,0.9) {$y'$};
	
\end{scope}

\end{scope}

\end{tikzpicture}
\caption{Separated opposing edge pair $a$ becomes $cdc^{-1}d^{-1}$.}
\label{cutpaste4}
\end{figure}

Now the planar diagram is a combination of $aa$, $aba^{-1}b^{-1}$ and $cc^{-1}$. The fourth step is to bring all $cc^{-1}$ together. This can be accomplished  by showing $cc^{-1}$ and any $u=aa$ or $aba^{-1}b^{-1}$ can be exchanged. In other words, $ucc^{-1}x$ can be changed to $dd^{-1}ux$. See Figure \ref{cutpaste5}.

\begin{figure}[htp]
\centering
\begin{tikzpicture}[>=latex,scale=1]

\draw
	(-0.7,-0.7) -- (-0.7,0.7) -- (0.7,0.7) -- (0.7,-0.7) -- (-0.7,0.7);
	
\draw[dashed]
	(-0.7,-0.7) -- (0.7,-0.7);

\foreach \a in {-1,0,1}
\draw[->, rotate=90*\a]
	(0.7,0) -- ++(0,0.1);

\draw[->]
	(0,0) -- ++(-0.05,0.05);
\draw[->]
	(-0.7,0) -- ++(0,0.1);
		
\begin{scope}[font=\scriptsize]

\node at (0.9,0) {$u$};
\node at (-0.5,0) {$c$};
\node at (0.15,0.15) {$d$};
\node at (0,0.9) {$c$};
\node at (0,-0.9) {$x$};

\foreach \a in {1,-1}
\foreach \b in {1,-1}
\node at (0.85*\a,0.85*\b) {$V_s$};

\end{scope}

\draw[->, very thick]
	(1.8,0) -- ++(1,0);

\begin{scope}[xshift=4.3cm]

\draw
	(0.7,0.7) -- (-0.7,-0.7) -- (-0.7,0.7) -- (0.7,0.7) -- (0.7,-0.7);
	
\draw[dashed]
	(-0.7,-0.7) -- (0.7,-0.7);

\draw[->]
	(0,0) -- ++(0.05,0.05);
\draw[->]
	(0.7,0) -- ++(0,0.1);
\draw[->]
	(0,-0.7) -- ++(0.1,0);
\draw[->]
	(0,0.7) -- ++(0.1,0);
\draw[->]
	(-0.7,0) -- ++(0,-0.1);
			
\begin{scope}[font=\scriptsize]

\node at (0.2,0) {$c$};
\node at (0.9,0) {$d$};
\node at (0,0.5) {$d$};
\node at (0,-0.9) {$x$};
\node at (-0.5,0) {$u$};

\foreach \a in {1,-1}
\foreach \b in {1,-1}
\node at (0.85*\a,0.85*\b) {$V_s$};

\end{scope}	

\end{scope}

\end{tikzpicture}
\caption{Exchange $cc^{-1}$ and $aa$ or $aba^{-1}b^{-1}$.}
\label{cutpaste5}
\end{figure}

Now the planar diagram is a combination of the words
\[
aa,\qquad aba^{-1}b^{-1},\qquad
c_1c_1^{-1}\cdots c_{v-1}c_{v-1}^{-1}.
\]
It remains to show that $aba^{-1}b^{-1}ccx$ can be changed to
$a_1a_1a_2a_2a_3a_3x$. A cut-and-paste procedure transforms
$aba^{-1}b^{-1}cc$ into $a_1a_1a_2a_2a_3a_3$, giving the claim.

\end{proof}

Combining Lemmas \ref{symmetry} and \ref{standard}, we get a precise description of the symmetry group of the universal tiling.

\begin{figure}[htp]
\centering
\begin{tikzpicture}[>=latex,scale=1]


\foreach \a in {-1,...,5}
\draw
	(0,0) -- (45*\a:1.2);

\foreach \a in {0,3,4,5}
\draw[->]
	(45*\a:0.8) -- ++(45*\a:0.1);

\foreach \a in {-1,1,2}
\draw[->]
	(45*\a:0.8) -- ++(45*\a:-0.1);
			
\fill
	(0,0) circle (0.05);

\begin{scope}[font=\scriptsize]

\foreach \a/\b in {-1/1,0/4,1/3,2/2,3/5,4/7}
\node at (22.5+45*\a:0.4) {\b};

\node at (-57:0.8) {$x$};
\node at (-12:0.8) {$a$};
\node at (33:0.8) {$b$};
\node at (102:0.8) {$a$};
\node at (147:0.8) {$b$};
\node at (168:0.8) {$c$};
\node at (213:0.8) {$y$};

\end{scope}

\begin{scope}[shift={(-10cm,0.8cm)}]

\draw
	(0,0) -- (8,0);

\draw[dashed]
	(0,0) to[out=-60, in=180] (4,-1.5) to[out=0, in=240] (8,0);
	
\foreach \a in {0,...,8}
\fill
	(\a,0) circle (0.05);

\foreach \a in {0,2,5,6,7}
\draw[->]
	(0.5+\a,0) -- ++(-0.1,0);

\foreach \a in {1,3,4}
\draw[->]
	(0.5+\a,0) -- ++(0.1,0);

\node at (4, -0.8) {$P$};

\begin{scope}[font=\scriptsize]

\foreach \a in {1,...,7}
\node at (8-\a,-0.2) {\a};

\node at (0.5,-0.2) {$y$};
\node at (1.5,0.2) {$c$};
\node at (2.5,-0.2) {$c$};
\node at (3.5,-0.2) {$b$};
\node at (4.5,0.2) {$a$};
\node at (5.5,0.2) {$b$};
\node at (6.5,-0.2) {$a$};
\node at (7.5,0.2) {$x$};

\end{scope}

\end{scope}


\begin{scope}[yshift=-2.6cm]

\foreach \a in {0,...,4}
\draw
	(0,0) -- (45*\a:1.2);

\foreach \a in {1,3,4}
\draw[->]
	(45*\a:0.8) -- ++(45*\a:0.1);

\foreach \a in {0,2}
\draw[->]
	(45*\a:0.8) -- ++(45*\a:-0.1);
			
\fill
	(0,0) circle (0.05);

\begin{scope}[font=\scriptsize]

\foreach \a/\b in {0/1,1/2,2/3,3/5}
\node at (22.5+45*\a:0.4) {\b};

\node at (-12:0.8) {$x$};
\node at (33:0.8) {$a$};
\node at (78:0.8) {$a$};
\node at (123:0.8) {$c$};
\node at (168:0.8) {$y$};

\end{scope}

\end{scope}

\begin{scope}[shift={(-9cm,-1.4cm)}]

\draw
	(0,0) -- (6,0);

\draw[dashed]
	(0,0) to[out=-60, in=180] (3,-1.5) to[out=0, in=240] (6,0);
	
\foreach \a in {0,...,6}
\fill
	(\a,0) circle (0.05);

\foreach \a in {0,2,3,4,5}
\draw[->]
	(0.5+\a,0) -- ++(-0.1,0);

\draw[->]
	(1.5,0) -- ++(0.1,0);

\node at (3, -0.8) {$P$};

\begin{scope}[font=\scriptsize]

\foreach \a in {1,...,5}
\node at (6-\a,-0.2) {\a};

\node at (0.5,-0.2) {$y$};
\node at (1.5,0.2) {$c$};
\node at (2.5,-0.2) {$c$};
\node at (3.5,0.2) {$a$};
\node at (4.5,-0.2) {$a$};
\node at (5.5,0.2) {$x$};

\end{scope}

\end{scope}
		
\end{tikzpicture}
\caption{Standard symmetry group of the universal tiling.}
\label{cutpaste6}
\end{figure}

\begin{theorem}\label{standard_symmetry}
Let ${\mc T}$ be a tiling of $S$ by congruent copies of $P$ associated to planar diagram $D$, with vertices $V_1^{k_1},V_2^{k_2},\dots, V_{p+1}^{k_{p+1}}$. Then the universal cover tiling $\tilde{\mc T}$ of $\tilde{S}$ has the symmetry group for orientable $S_D$
\begin{align*}
\Gamma
&=\langle a_1,b_1,\dots,a_g,b_g,c_1,\dots,c_{p+1}\; | \\
&\qquad c_1^{k_1},\dots,c_{p+1}^{k_{p+1}},c_1\cdots c_{p+1}a_1b_1a_1^{-1}b_1^{-1}\cdots a_gb_ga_g^{-1}b_g^{-1}\rangle,
\end{align*}
and the symmetry group for nonorientable $S_D$
\[
\Gamma
=\langle a_1,\dots,a_g,c_1,\dots,c_{p+1}\; | \;
c_1^{k_1},\dots,c_{p+1}^{k_{p+1}},c_1\cdots c_{p+1}a_1^2\cdots a_g^2\rangle.
\]
\end{theorem}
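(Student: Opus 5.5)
The plan is to reduce to a standard planar diagram and then read off the presentation from Lemma~\ref{symmetry}. By Lemma~\ref{standard}, the tiling $\mc T$ is equivalent, through a sequence of cut-and-paste operations, to a tiling $\mc T'$ whose prototile $P'$ has one of the standard diagrams
\[
a_1b_1a_1^{-1}b_1^{-1}\cdots a_gb_ga_g^{-1}b_g^{-1}c_1c_1^{-1}\cdots c_pc_p^{-1}
\quad\text{or}\quad
a_1a_1\cdots a_ga_gc_1c_1^{-1}\cdots c_pc_p^{-1}.
\]
Cut and paste changes only the decomposition into tiles. The underlying surface, the lifted universal cover tiling, and the locations of the vertices are unchanged. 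So the symmetry group $\Gamma$ of $\tilde{\mc T}$ equals that of $\tilde{\mc T}'$. Indeed, each operation is reversible and is determined canonically by the tiling, so any symmetry of one tiling is a symmetry of the other. The vertex multiplicities are also preserved: each vertex $V_i$ of $\mc T_D$ corresponds to a vertex of $\mc T_{D'}$ at the same point of $S_D$, and the corner sum there is still $2\pi/k_i$. We may therefore assume $D$ is standard, and we relabel the vertices so that $V_1,\dots,V_p$ are the degree-one middle corners of the segments $c_ic_i^{-1}$ and $V_{p+1}$ is the remaining vertex.

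Next we compute the dictionary $\delta$ for the standard diagram and apply Lemma~\ref{symmetry}. At the middle corner of $c_ic_i^{-1}$ there is a single corner. Walking around this vertex as in Figure~\ref{cutpaste6} crosses only the edge $c_i$, so $\delta(V_i)=c_i^{\pm1}$ for $i\le p$, giving the relations $c_i^{k_i}$. All other corners lie on $V_{p+1}$. Tracing around $V_{p+1}$, following the pattern in Figure~\ref{cutpaste6}, we pass the corners in the order in which they occur in the diagram. The crossed edges then spell out, up to cyclic order and inversion, the word $a_1b_1a_1^{-1}b_1^{-1}\cdots a_gb_ga_g^{-1}b_g^{-1}c_1\cdots c_p$ in the orientable case, and $a_1^2\cdots a_g^2c_1\cdots c_p$ in the nonorientable case. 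This yields the relation $\delta(V_{p+1})^{k_{p+1}}$.

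To reach the stated form, we introduce a new generator
\[
c_{p+1}=(c_1\cdots c_p\,a_1b_1a_1^{-1}b_1^{-1}\cdots a_gb_ga_g^{-1}b_g^{-1})^{-1},
\]
or its nonorientable analogue. Then $\delta(V_{p+1})$ is conjugate to $c_{p+1}^{\pm1}$. Adding $c_{p+1}$ together with its defining relation is a Tietze transformation, and the result is exactly the presentation in the statement. Since replacing a relator by a conjugate or an inverse does not change the normal closure, we can use cyclic rotations and inverses freely.

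The main obstacle is the bookkeeping in this last reading-off step. We must check that the cyclic word obtained around $V_{p+1}$ really is the product stated, with the correct exponents and order, and not some other word that happens to have the same abelianization. To handle this, we track the dictionary entries corner by corner, using the rule from the octagon example: the corner $i_\pm$ contributes the generator of the edge crossed, with an inverse determined by whether that edge is labelled inside or outside the tile. The orientation sign flips exactly when a twisted pair $aa$ is crossed. In the orientable case no twisted pairs occur, so all signs stay $+$ and the word is read directly. In the nonorientable case, each $aa$ block contributes $a^2$, because crossing the first $a$ flips the orientation and reading the second $a$ then gives the same letter rather than its inverse. With this consistency check in hand, the two presentations follow.
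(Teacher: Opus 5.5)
Your proposal follows the paper's own route: reduce to a standard diagram by Lemma~\ref{standard}, then read off the vertex relations via Lemma~\ref{symmetry} as in Figure~\ref{cutpaste6}. You make explicit the Tietze step and the orientation/sign bookkeeping that the paper leaves implicit. There are two small slips, neither of which affects the result. First, around $V_{p+1}$ a commutator block is visited in the corner order $1,4,3,2,5$ shown in Figure~\ref{cutpaste6}, not in diagram order, although it still yields a commutator after possibly replacing generators by their inverses. Second, to obtain the stated relator literally you should set $c_{p+1}=(c_1\cdots c_p)^{-1}(a_1b_1a_1^{-1}b_1^{-1}\cdots a_gb_ga_g^{-1}b_g^{-1})^{-1}$, since your choice produces the isomorphic but differently ordered relator $c_1\cdots c_p\,a_1b_1a_1^{-1}b_1^{-1}\cdots a_gb_ga_g^{-1}b_g^{-1}c_{p+1}$.
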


\section{Existence of Even-Sided  Tilings}

By Lemma \ref{standard}, any tiling by  congruent most curvilinear polygons of even number of sides is equivalent to a tiling by congruent most curvilinear polygons associated to one of the following standard planar diagrams:
\begin{itemize}
\item $a_1b_1a_1^{-1}b_1^{-1}\cdots a_gb_ga_g^{-1}b_g^{-1}c_1c_1^{-1}\cdots c_pc_p^{-1}$ ($g\ge1,p\ge0$).
\item $a_1a_1\cdots a_ga_gc_1c_1^{-1}\cdots c_pc_p^{-1}$ ($g\ge1,p\ge0$).
\item $c_1c_1^{-1}\cdots c_pc_p^{-1}$ ($p>0$).
\end{itemize}

Note that in each case, there are $p+1$ vertices $V_1,V_2,...,V_{p+1}$ in the planar diagram. We call the above classification planar diagram of first kind, second kind and third kind. In this section, we deal with the following question:

\medskip

\noindent\textbf{Question.} Let $D$ be a planar diagram with vertices
$V_1,\ldots,V_{p+1}$, and let $k_1,\ldots,k_{p+1}\geq2$ be integers.
For which surfaces $S$ does there exist a tiling by congruent most
curvilinear polygons associated to $D$ with vertex types
$V_1^{k_1},\ldots,V_{p+1}^{k_{p+1}}$?
\medskip

\noindent{\textbf{Remark.}} We always assume $k_i\ge2$ in the above question because whenever there is some $k_i=1$, we may consider the planar diagram $D'$ which reduces $p$ by one in $D$.

If we can find such a  tiling on $S$ with $d$ tiles, by Lemma 2, this is equivalent to the existence of a branched covering $S \to  S_D$ of degree $d$, with branched data $\mathscr{D}=\{A_1,A_2,...,A_{p+1}\}$, where $A_i=[k_i,k_i,...,k_i]$ are partitions of $d$. So given $S$, $D$ and $k_1,k_2,\dots,k_p,k_{p+1}$, the above question is equivalent to whether one can find suitable $d$, such that $d$ has above partitions $\mathscr{D}$ and  $S$ is a branched cover over surface $S_D$ with the branched data $\mathscr{D}$. In fact, this is the special case of Hurwitz existence problem, whose branched data at a point as a partition of $d$ can be arbitrary. 

In general, there are some easy and well-known necessary conditions for a branched covering, called Hurwitz conditions: If $\phi:\tilde{S} \to S $ is a degree $d$ branched covering of closed, connected surfaces with branch data $\mathscr{D}=\{A_1,...,A_m\}$,  then the total branching $\nu(\phi)=\nu(\mathscr{D})=\sum_{i=1}^m\nu(A_i)$ is even; where if $A=[a_1,...,a_r]$ is a partition of $d$, then $\nu(A)=d-r=\sum_{i=1}^r(a_i-1)$. If $S$ is orientable, then so is $\tilde{S}$; if $d$ is odd and $S$ is nonorientable, then $\tilde{S}$ is also nonorientable. And finally, by Riemann-Hurwitz formula, $\chi(\tilde{S})=d\chi(S)-\nu(\phi)$.

If $\mathscr{D}=\{A_1,...,A_m\}$ is a collection of partitions of $d$, we say simply $\mathscr{D}$ is \textit{realizable }if $\mathscr{D}$ arises as the branch data of a connected branched covering of $\bb S^2$. For an element $\alpha$ in symmetric group $\mathfrak{S}_d$ of $d$ element, $A$ is a partition of $d$, we say $\alpha\in A$ if cycles of $\alpha$ give the same partition with $A$. We also use $||\alpha||$ to represent the partition given by $\alpha$, that is $||\alpha||=A$ if $\alpha \in A$. The following  lemma is the algebraic conditions for the existence of a connected $d$-fold covering (See Lemma 2.1 in \cite{article1}). 

\begin{lemma}\label{lemma2.1}
Let \( S \) be a closed, connected surface and \( \mathscr{D} = \{A_1, \ldots, A_n\} \) be a collection of partitions of \( d \) (repetitions allowed). Then a necessary and sufficient condition that there exist a \( d \)-fold branched covering \( \phi: \tilde{\Sigma} \to \Sigma \) with \( \tilde{\Sigma} \) closed and connected and with branch data \( \mathscr{D} \) is as follows:

\begin{enumerate}
\item[(a)] For \( S = \bb S^2 \), there exist elements \( \alpha_i \in \mathfrak{S}_d \), \( \alpha_i \in A_i \), such that \( \alpha_1 \cdots \alpha_n = 1 \) and \( \langle \alpha_1, \ldots, \alpha_n \rangle \) acts transitively on \( \{1, \ldots, d\} \).

\item[(b)] For \( S = g\bb T^2 \), a connected sum of \( g \geq 1 \) copies of \( \bb T^2\), there exist elements \( \alpha_i \in \mathfrak{S}_d \), \( \alpha_i \in A_i \), and \( \beta_j, \gamma_j \in \mathfrak{S}_d \), \( 1 \leq j \leq g \), such that
\[
\alpha_1 \cdots \alpha_n \beta_1 \gamma_1 \beta_1^{-1} \gamma_1^{-1} \cdots \beta_g \gamma_g \beta_g^{-1} \gamma_g^{-1} = 1
\]
and \( \langle \alpha_1, \ldots, \alpha_n, \beta_1, \gamma_1, \ldots, \beta_g, \gamma_g \rangle \) acts transitively on \( \{1, \ldots, d\} \).

\item[(c)] For \( S = g \bb P^2\), a connected sum of \( g \geq 1 \) projective planes \(\bb P^2 \), there exist elements \( \alpha_i \in \mathfrak{S}_d \), \( \alpha_i \in A_i \), and \( \beta_j \in \mathfrak{S}_d \), \( 1 \leq j \leq g \), such that \( \alpha_1 \cdots \alpha_n \beta_1^2 \cdots \beta_g^2 = 1 \) and \( \langle \alpha_1, \ldots, \alpha_n, \beta_1, \ldots, \beta_g \rangle \) acts transitively on \( \{1, \ldots, d\} \). 
\end{enumerate}
\end{lemma}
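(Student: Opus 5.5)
The plan is the classical monodromy correspondence. Fix a base point $*\in S\setminus\{x_1,\dots,x_n\}$, where $x_1,\dots,x_n$ are the branch points of $S$ carrying the partitions $A_1,\dots,A_n$. The punctured surface $S^\circ=S\setminus\{x_1,\dots,x_n\}$ has the following fundamental group, depending on the case:
\begin{itemize}
\item for $S=\bb S^2$: the quotient of the free group on loops $\ell_1,\dots,\ell_n$ around the punctures by $\ell_1\cdots\ell_n=1$;
\item for $S=g\bb T^2$: the quotient of the free group on $\ell_i$ and the standard $u_j,v_j$ by $\ell_1\cdots\ell_n\prod[u_j,v_j]=1$;
\item for $S=g\bb P^2$: the quotient of the free group on $\ell_i$ and $w_j$ by $\ell_1\cdots\ell_n\prod w_j^2=1$.
\end{itemize}
These presentations come from the standard polygon model of $S$, with the punctures placed near one corner, as in the standard planar diagrams of Lemma~\ref{standard}.

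For necessity, let $\phi:\tilde S\to S$ be a $d$-fold branched covering with branch data $\mathscr D$. Restricting to the preimage of $S^\circ$ gives an unbranched $d$-fold covering. Labelling the fibre over $*$ by $\{1,\dots,d\}$, path lifting gives a monodromy homomorphism $\rho:\pi_1(S^\circ,*)\to\mathfrak S_d$. Set $\alpha_i=\rho(\ell_i)$, $\beta_j=\rho(u_j)$ or $\rho(w_j)$, and $\gamma_j=\rho(v_j)$. Since $\rho$ is a homomorphism, these permutations satisfy the defining relation. Connectedness of $\tilde S$, and hence of the preimage of $S^\circ$ (the branch set is finite), is equivalent to transitivity of the image. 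The cycles of $\alpha_i$ correspond to the components of the preimage of a small punctured disk around $x_i$, and the cycle lengths are the local degrees. Hence $\alpha_i\in A_i$.

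For sufficiency, the given permutations define a homomorphism $\rho$ from the presented group to $\mathfrak S_d$, which is well defined because the relation holds. By covering space theory $\rho$ corresponds to a $d$-fold covering of $S^\circ$, connected by transitivity. Over each small punctured disk around $x_i$ the covering splits into components, one for each cycle of $\alpha_i$. A component attached to a cycle of length $a$ is a punctured disk mapping by $z\mapsto z^a$. Filling in each such puncture yields a closed connected surface $\tilde S$ and a branched covering with local degrees given by the cycle type of $\alpha_i$, i.e.\ with branch data $\mathscr D$.

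The only step needing care is the presentation of $\pi_1(S^\circ)$ with exactly this product relation and orientation convention; the nonorientable case works the same way, since monodromy does not require orientability. This is the content of Lemma~2.1 in \cite{article1}, and I would cite it, giving the argument above as a sketch.
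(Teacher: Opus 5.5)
Your proposal is correct and takes the same route as the paper, which gives no argument of its own and simply cites Lemma~2.1 of \cite{article1}; that result is exactly the monodromy correspondence you sketch, between connected $d$-fold branched coverings and transitive permutation representations of $\pi_1$ of the punctured surface with its standard one-relator presentation. Two details are worth making explicit, and neither affects cycle types: each $\ell_i$ is a conjugate $\tau_i c_i \tau_i^{-1}$ of a small loop around $x_i$, so $\alpha_i$ is only conjugate to the local monodromy, and if your path-lifting convention gives an anti-homomorphism, you turn it into a homomorphism by inverting all the permutations.
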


\noindent{\textbf{Remark.}} Let $S-n\cdot\{ pt\}$ be the surface $S$ deletes $n$ branched points. Then by Lemma \ref{lemma2.1}, each branched cover defines a map $\theta:\pi_1(\Sigma-n\cdot\{pt\})\to \mathfrak{S}_d$. For example, if $S=g\bb T^2$, then:\\
$\pi_1(g\bb T^2-n\cdot\{pt\})=<e_1,...,e_n,a_1,...,a_g,b_1,...,b_g| e_1...e_n[a_1,b_1]...[a_g,b_g]=1>$. We will keep this notation and use $e_i$ to represent elements in $\pi_1(S-n\cdot\{ pt\})$ around deleted points.

For natural numbers $k_1,...,k_{p+1}\ge2$, let $\lambda=LCM(k_1,...,k_{p+1})$, $B=\sum_{i=1}^{p+1}(\lambda-\frac{\lambda}{k_i})$. Define $d'=2\lambda$ if $B$ is odd,  $d'=\lambda$ if $B$ is even. Note that $d'$ is the least positive number such that condition 1 and 2 in Theorem \ref{case12} hold.

\begin{lemma}\label{realizability}
Suppose $k_1,...,k_{p+1}\ge2$ are natural numbers, $A_i=[k_i,...,k_i]$ for $1\le i \le p+1$ is a partition of $d'$. Then the collection $\mathscr{D}=\{A_1,...,A_{p+1}\}$ is realizable if  $p\ge2$. 
\end{lemma}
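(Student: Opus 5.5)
The plan is to realize $\mathscr D$ through Lemma~\ref{lemma2.1}(a) using a \emph{regular} permutation representation of a finite group $G$ of order $d'$. If $G$ acts on itself by left translation, then an element $g\in G$ of order $k$ acts as a permutation all of whose cycles have length $k$, i.e.\ $g\in[k,\ldots,k]$. It therefore suffices to find a group $G$ with $|G|=d'$ and elements $g_1,\ldots,g_{p+1}\in G$ such that
\[
\mathrm{ord}(g_i)=k_i,\qquad g_1g_2\cdots g_{p+1}=1,\qquad \langle g_1,\ldots,g_{p+1}\rangle=G .
\]
Generation gives transitivity, and the relation gives $\alpha_1\cdots\alpha_{p+1}=1$. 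I would try abelian groups first, writing them additively, so that the condition becomes a linear congruence problem.

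In the case $d'=\lambda$ (that is, $B$ even), take $G=\bb Z_\lambda$ and $g_i=u_i\lambda/k_i$ with $\gcd(u_i,k_i)=1$, so that $\mathrm{ord}(g_i)=k_i$. Generation is automatic: the gcd of the $g_i$ with $\lambda$ equals $\lambda/\mathrm{lcm}(k_i)=1$, provided the $u_i$ are units. The remaining condition is
\[
\sum_{i=1}^{p+1}u_i\frac{\lambda}{k_i}\equiv 0 \pmod{\lambda}.
\]
I would solve this prime by prime using the Chinese remainder theorem. For an odd prime $\ell$, let $\ell^e$ be the exact power of $\ell$ dividing $\lambda$. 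Reducing mod $\ell^e$, the terms in which $k_i$ carries the full power $\ell^e$ are units times $\lambda/\ell^e$, and there are at least $2$ such terms whenever there is one, or else they are fixable in combination with the others. With at least three summands ($p\ge2$) and enough freedom in choosing units mod an odd prime power, one can make the sum vanish.

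For $\ell=2$ the freedom disappears, because units are odd. This is exactly where the parity of $B$ enters. Modulo $2$, the condition reduces to requiring that the number of indices $i$ with $v_2(k_i)=v_2(\lambda)$ be even, which is equivalent to $B$ even. Higher powers of $2$ can then be adjusted by lifting, again using at least three summands.

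In the case $d'=2\lambda$ (that is, $B$ odd), I would take $G=\bb Z_\lambda\times\bb Z_2$ or $\bb Z_{2\lambda}$, using the extra $\bb Z_2$ coordinate to repair the parity defect. One can add the element $(0,1)$ to two of the $g_i$ whose orders are unchanged by this, namely those with $k_i$ even, or else use $\bb Z_{2\lambda}$ with $g_i=u_i\,2\lambda/k_i$. Generation of $G$ must then be checked, and it needs the second coordinates to be nonzero on some generator not lying in $2G$.

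I expect the main obstacle to be the prime-by-prime solvability, in particular the cases in which only one or two $k_i$ attain the maximal power of some prime, and the generation of $G$ in the $2\lambda$ case. These are precisely the situations where abelian groups can fail; for example, a single $k_i$ containing a prime absent from all the others cannot be cancelled. If the abelian ansatz fails there, I would fall back on one of two arguments. The first is to replace $G$ by a nonabelian group of order $d'$, such as a dihedral or semidirect product $\bb Z_a\rtimes\bb Z_b$, in which elements of order $k_i$ can multiply to the identity without linear constraints. The second is to realize the data for three partitions first and then handle the additional partitions by fibering/composing covers: the extra $V_i$ can be absorbed by splitting one relation $g_1g_2=g_3^{-1}$ into more factors, each of order $k_i$, in the regular representation. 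Throughout, the hypothesis $p\ge2$, i.e.\ at least three branch points, is what supplies enough free parameters, since for two partitions the data $[k,\ldots,k],[k',\ldots,k']$ with $k\ne k'$ are never realizable.
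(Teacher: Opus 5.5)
\textbf{There is a genuine gap: the regular-representation ansatz.} Requiring a group $G$ of order exactly $d'$ to be generated by elements of orders $k_i$ with product $1$ amounts to requiring a \emph{regular} branched covering of degree $d'$. Such coverings often do not exist even when the data are realizable. This failure is not confined to the abelian sub-case; it holds for every choice of $G$.

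Take $(k_1,k_2,k_3)=(2,3,7)$, so $p=2$. Then $\lambda=42$, and $B=21+28+36=85$ is odd, so $d'=84$. The lemma asserts that $[2,\ldots,2]$, $[3,\ldots,3]$, $[7,\ldots,7]$ (as partitions of $84$) are realizable, by a genus-$2$ surface, since $2\cdot 84-170=-2$.

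Any group generated by $g_1,g_2,g_3$ with $g_1^2=g_2^3=g_3^7=g_1g_2g_3=1$ is a quotient of $\langle x,y,z\mid x^2,y^3,z^7,xyz\rangle$. That group has trivial abelianization, so every such quotient is perfect. A group of order $84$, however, has a normal Sylow $7$-subgroup: the only divisor of $12$ congruent to $1$ mod $7$ is $1$. Hence it is solvable and not perfect.

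So no group of order $84$, abelian or not, can serve as $G$. Your first fallback (a nonabelian group of order $d'$) cannot succeed. Your second fallback (splitting a relation inside a regular representation) is unspecified and faces the same obstruction.

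Your prime-by-prime analysis is also wrong as written. Suppose $\ell$ is odd, $\ell^e$ is the exact power of $\ell$ dividing $\lambda$, and $\ell^e$ divides only one $k_i$. Then that summand is a unit modulo $\ell$, while every other summand is divisible by $\ell$. So nothing is ``fixable in combination with the others.'' The cyclic ansatz works only under Harvey-type conditions, for instance when $d'=\lambda$ and every maximal prime power dividing $\lambda$ divides at least two of the $k_i$.

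\textbf{What is missing.} You need to pass to genuinely non-regular permutation representations. That is, construct $\alpha_i\in\mathfrak S_{d'}$ of cycle type $[k_i,\ldots,k_i]$ directly, with $\alpha_1\cdots\alpha_{p+1}=1$ and $\langle\alpha_1,\ldots,\alpha_{p+1}\rangle$ transitive, without imposing a group structure on $\{1,\ldots,d'\}$. Equivalently, find a torsion-free subgroup of index $d'$ in $\langle x_1,\ldots,x_{p+1}\mid x_i^{k_i},\,x_1\cdots x_{p+1}\rangle$ that is in general not normal.

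The paper does not carry this out itself; it quotes the result as Theorem~(1.4) of \cite{article2}. Your ansatz handles only special cases. For the general statement you need either that citation or an explicit combinatorial construction in $\mathfrak S_{d'}$.
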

\begin{proof}
See Theorem (1.4) in \cite{article2}.

\end{proof}

\begin{corollary}\label{core lemma}
Let $k_1,\ldots,k_{p+1}\ge2$ be integers, with $p\ge1$,
and let $d\ge1$ be an integer.
There exists a connected branched covering of $S^2$ of
degree $d$, with branch data
$\mathscr{D}=\{A_1,\ldots,A_{p+1}\}$, where
$A_i=[k_i,\ldots,k_i]$, if and only if
\begin{enumerate}
\item $k_i\mid d$ for every $i=1,\ldots,p+1$;
\item the total branching
\[
\nu(\mathscr{D} )
=\sum_{i=1}^{p+1}\left(d-\frac{d}{k_i}\right)
\]
is even;
\item $\nu(\mathscr{D})\ge2d-2$.
\end{enumerate}
\end{corollary}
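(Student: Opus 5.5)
Necessity comes from the Hurwitz conditions; sufficiency comes from Lemma~\ref{realizability} together with a way of enlarging the degree.

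\textbf{Necessity.} Suppose a connected degree-$d$ branched covering $\phi:\tilde\Sigma\to S^2$ has branch data $\mathscr{D}$.
\begin{enumerate}
\item Each $A_i=[k_i,\dots,k_i]$ is a partition of $d$, so $k_i\mid d$.
\item The total branching $\nu(\mathscr{D})$ is even by the parity Hurwitz condition. This is also the statement that $\alpha_1\cdots\alpha_{p+1}=1$ forces the product of the signs of the $\alpha_i$ to be $1$.
\item By Riemann--Hurwitz, $\chi(\tilde\Sigma)=2d-\nu(\mathscr{D})$. Since $\tilde\Sigma$ is closed and connected, $\chi(\tilde\Sigma)\le 2$, which gives $\nu\ge 2d-2$.
\end{enumerate}

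\textbf{Sufficiency, reduction to $d'$.} Conditions 1 and 2 say $\lambda\mid d$ and $\nu$ is even. Write $d=sd'$ with $s\ge1$: if $B$ is odd, parity forces $d/\lambda$ even; if $B$ is even, $d'=\lambda$. I would realize degree $d$ from a realization of degree $d'$. Condition 3 rewrites as
\[
\sum_i\left(1-\frac1{k_i}\right)\ge 2-\frac2d .
\]
With $p\ge2$ this holds automatically, since the left side is at least $3/2$. With $p=1$ the left side is less than $2$, so condition 3 can hold only if $\chi(\tilde\Sigma)=2$; this case needs separate treatment.

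\textbf{Case $p\ge2$.} Lemma~\ref{realizability} gives permutations $\alpha_i\in A_i$ in $\mathfrak S_{d'}$ with $\alpha_1\cdots\alpha_{p+1}=1$ generating a transitive group. To pass to degree $sd'$, I would form $s$ disjoint copies of this realization and connect them by a cut-and-paste (monodromy twist) that preserves every cycle type. Concretely, pick two branch indices $i\ne j$ and a cycle of $\alpha_i$ in copy $r$. Conjugating the chosen $\alpha$'s on selected copies, or composing $\alpha_i$ with a transposition joining copy $r$ to copy $r+1$ and compensating in $\alpha_j$, merges the copies. The goal is to keep each $\alpha_i$ of type $[k_i^{sd'/k_i}]$ and the product equal to $1$ while making the group transitive.

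A cleaner route: choose a permutation $\sigma$ commuting with nothing in particular, and replace $\alpha_1,\alpha_2$ by $\alpha_1\tau,\ \tau^{-1}\alpha_2$ on $\bigsqcup_r$ copies, where $\tau$ cyclically permutes the copies of one point. Checking that cycle types survive this is delicate. Alternatively I would cite Theorem (1.4) of \cite{article2} directly for every $d$ satisfying 1 and 2: I expect that theorem is stated for all such $d$, not just the minimal one, which would make this step immediate. I would first try to invoke it in that generality.

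\textbf{Case $p=1$ (main obstacle).} Here $\alpha_2=\alpha_1^{-1}$ must have cycle type $[k_2^{d/k_2}]$, forcing $k_1=k_2=k$. Transitivity of $\langle\alpha_1\rangle$ forces $d=k$, and then $\nu=2d-2$ exactly. This matches conditions 1 to 3: if $k_1\ne k_2$, condition 3 fails, since $(1-1/k_1)+(1-1/k_2)<2-2/d$ for $d\ge\lambda>\max k_i$. The covering is $z\mapsto z^k$. The delicate step is this arithmetic verification that condition 3 pins down $k_1=k_2=d$. I expect it to be routine but the place where the statement's exact form matters.
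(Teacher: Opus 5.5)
\textbf{The gap is in the case $p\ge2$.} You claim that condition 3 holds automatically there, and this is false. The sum $\sum_i(1-1/k_i)$ can be as small as $3/2$, while $2-2/d$ approaches $2$.

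For example, take $k_1=k_2=k_3=2$ and $d=8$:
\begin{itemize}
\item conditions 1 and 2 hold, since $\nu=12$ is even;
\item condition 3 fails, since $12<14$;
\item no such covering exists, because Riemann--Hurwitz would give a connected surface with $\chi=4$.
\end{itemize}
The same happens for $(2,3,3)$ with $d=24$. In general it happens whenever the degree-$d'$ realization is a sphere and $s\ge2$.

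Hence neither of your routes can work as proposed. A recipe that merges $s$ disjoint copies of the degree-$d'$ monodromy, preserving the uniform cycle types and never invoking condition 3, would produce coverings that do not exist. Likewise, Theorem (1.4) of \cite{article2} cannot hold for every $d$ satisfying only conditions 1 and 2.

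\textbf{The missing idea} is to use condition 3 through Riemann--Hurwitz for the degree-$d'$ realization $f_0:S_0\to S^2$ given by Lemma~\ref{realizability}. For uniform data, $\nu$ is linear in the degree. So for $d=sd'$ you get $s\chi(S_0)=2d-\nu(\mathscr D)\le2$.
\begin{itemize}
\item If $\chi(S_0)>0$, then $S_0$ is a sphere and $s=1$, so $f_0$ already works.
\item Otherwise $\chi(S_0)\le0$. Then $S_0$ has a connected unbranched covering $h:S_1\to S_0$ of every degree $s$. The composite $f_0\circ h$ has degree $d$, and every local degree over the $i$-th branch point equals $k_i$.
\end{itemize}
This is the paper's proof. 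In your permutation language, the merging you were looking for comes from a transitive representation $\pi_1(S_0)\to\mathfrak S_s$. Such a representation exists precisely because $S_0$ is not a sphere.

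Separately, your $p=1$ conclusion is right, but the justification via $\lambda>\max k_i$ fails when $k_1\mid k_2$ (e.g.\ $k_1=2$, $k_2=4$). The clean argument is that condition 3 reads $d/k_1+d/k_2\le2$. By condition 1 both summands are positive integers, which forces $k_1=k_2=d$.
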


\begin{proof}
Necessity follows from the Hurwitz conditions.
For sufficiency, If $p=1$, conditions 1 and 3 give
$d/k_1+d/k_2\le2$, hence $k_1=k_2=d$.
Taking a $d$-cycle and its inverse realizes
$A_1=A_2=[d]$ by Lemma \ref{lemma2.1}(a).
Henceforth assume $p\ge2$. Conditions 1 and 2 imply $d=sd'$
for some positive integer $s$.
By Lemma \ref{realizability}, there is a connected branched covering
$f_0:S_0\to S^2$ of degree $d'$ with the prescribed
local degrees $k_i$.
The Riemann--Hurwitz formula gives
\[
s\chi(S_0)=2d-\nu(\mathscr{D})\le2.
\]
If $\chi(S_0)>0$, then $\chi(S_0)=2$, so $s=1$,
and $f_0$ gives the required covering.
Otherwise, take a connected unbranched covering
$h:S_1\to S_0$ of degree $s$.
Then $f_0\circ h$ has degree $d$ and branch data
$\mathscr{D}$, as required.
\end{proof}

\begin{theorem}\label{case12}
Let $S$ be a closed, connected topological surface, and let $D$
be a planar diagram with associated surface $S_D$.
Let $V_1,\ldots,V_{p+1}$ be the vertices of $S_D$, and let
$k_1,\ldots,k_{p+1}\geq 2$ and $d\geq 1$ be integers.

There exists a metric of constant  curvature on $S$
admitting a tiling by $d$ congruent most curvilinear polygons
associated to $D$, with vertex types
\[
V_1^{k_1},\ldots,V_{p+1}^{k_{p+1}},
\]
if and only if the following conditions hold:
\begin{enumerate}
    \item $k_i\mid d$ for every $i=1,\ldots,p+1$.

    \item The total branching
    \[
    \nu(\mathscr{D})=\sum_{i=1}^{p+1}
    \left(d-\frac{d}{k_i}\right)
    \]
    is even.

    \item The Euler characteristics satisfy
    \[
    \chi(S)=d\,\chi(S_D)-\nu(\mathscr{D}).
    \]

    \item If $S_D$ is orientable, then $S$ is orientable.

    \item If $S_D$ is nonorientable and $S$ is orientable, then
    \[
    2k_i\mid d
    \qquad\text{for every }i=1,\ldots,p+1.
    \]
\end{enumerate}
\end{theorem}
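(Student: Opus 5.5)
By Lemma~\ref{lem:branched-cover}, the statement is equivalent to a Hurwitz existence problem. There is a constant-curvature metric on $S$ with the required tiling by $d$ tiles if and only if there is a connected degree-$d$ branched covering $f:S\to S_D$ whose branch data are $\mathscr D=\{A_1,\dots,A_{p+1}\}$, with $A_i=[k_i,\dots,k_i]$, and which has no other branching. So the plan is to prove necessity from the Hurwitz conditions and sufficiency by building permutation data as in Lemma~\ref{lemma2.1}.

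\textbf{Necessity.} Conditions 1--3 are the standard Hurwitz conditions. Every point over $V_i$ has local degree $k_i$, so $k_i\mid d$. The total branching is even, and the Riemann--Hurwitz formula gives condition 3. A branched cover of an orientable surface is orientable, which gives condition 4.

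For condition 5, let $f:S\to S_D$ with $S$ orientable and $S_D$ nonorientable. Take a small disk $U$ around $V_i$; it is orientable, so we can pick an orientation on $U$ and the two orientation-preserving lifts of $U$'s structure. Because $S$ is orientable, $f$ factors through the orientation double cover $\hat S_D\to S_D$. In that double cover, $V_i$ has two unbranched preimages, and the branching data over each is again $[k_i,\dots,k_i]$. Hence the induced cover $S\to\hat S_D$ has degree $d/2$ and local degree $k_i$ at every point over each of the two preimages. Therefore $k_i\mid d/2$, that is, $2k_i\mid d$.

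If one prefers permutations: the monodromy of $S\to S_D$ sends orientation-reversing loops to permutations that swap the two sheets of the orientation double cover. The sheets therefore split into two classes of size $d/2$, and each $\alpha_i$ preserves both classes.

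\textbf{Sufficiency, orientable $S_D=g\mathbb T^2$ ($g\ge1$) or the sphere case.} I would reduce to Corollary~\ref{core lemma} as follows.

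\emph{Sphere case.} Here $S_D=\mathbb S^2$ (third kind, $p\ge1$), and $\chi(S)\le2$ together with condition 3 gives exactly condition 3 of Corollary~\ref{core lemma}, so that corollary applies directly.

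\emph{Positive genus.} Condition 3 determines $\chi(S)$, and since $S$ is orientable, it determines $S$. I need $\alpha_i\in A_i$ and $\beta_j,\gamma_j$ with $\alpha_1\cdots\alpha_{p+1}\prod[\beta_j,\gamma_j]=1$ and transitive action. The plan has two steps:
\begin{itemize}
\item Choose $\alpha_i\in A_i$ whose product is an even permutation; this is possible by condition 2.
\item Use the classical fact (Ore) that every element of the alternating group is a commutator in $\mathfrak S_d$. Solve $[\beta_1,\gamma_1]=(\alpha_1\cdots\alpha_{p+1})^{-1}$, and take the other $\beta_j,\gamma_j$ to be trivial except for a $d$-cycle pair forcing transitivity. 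For example, put $\beta_2=\gamma_2=$ a $d$-cycle when $g\ge2$.
\end{itemize}
When $g=1$, transitivity is the delicate point. I would first pick $\alpha$'s generating a transitive group, say via Lemma~\ref{realizability} or a $d$-cycle trick, or alternatively invoke the known theorem (Edmonds--Kulkarni--Stong) that over surfaces of positive genus, Hurwitz conditions suffice.

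\textbf{Sufficiency, nonorientable $S_D=g\mathbb P^2$.}
\begin{itemize}
\item \emph{$S$ nonorientable.} Use Lemma~\ref{lemma2.1}(c). Again by Edmonds--Kulkarni--Stong (\cite{article1}), the Hurwitz conditions suffice for nonorientable targets, with $\chi$ fixed by condition 3.
\item \emph{$S$ orientable.} Use condition 5. Build a connected orientable cover $S\to\hat S_D$ of degree $d/2$ with branch data $[k_i,\dots,k_i]$ at both preimages of each $V_i$, by the orientable case above. Arrange it to be equivariant under the deck involution, so that it descends. Equivalently, use permutations on two blocks, where the $\alpha_i$ preserve the blocks and the $\beta_j$ swap them.
\end{itemize}

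The main obstacle is this last subcase, together with ensuring transitivity and the exact realization of $\chi(S)$ in the low-genus case $g=1$. I would first try to cite the realizability theorem of \cite{article1}, which handles orientable and nonorientable targets of positive genus, and only check that condition 5 is exactly its extra condition for orientable covers of nonorientable surfaces.
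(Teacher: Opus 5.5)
Most of your outline matches the paper's proof. You reduce via Lemma~\ref{lem:branched-cover}, take conditions 1--4 from the Hurwitz conditions, use Corollary~\ref{core lemma} for $S_D=\mathbb S^2$, and rely on \cite{article1} when $\chi(S_D)\le 0$. Your direct proof that condition 5 is necessary, by factoring through the orientation double cover, is correct; it is the content of the Proposition~2.7 of \cite{article1} that the paper cites. For the torus case $g=1$, Corollary~\ref{cor3.2}(ii) settles transitivity: it writes the even product as a commutator whose first entry is a $d$-cycle. Alternatively, cite \cite{article1} as the paper does.

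\textbf{The gap: orientable $S$ over nonorientable $S_D$.} Here the mechanism you propose would fail as written. You should not look for a cover $S\to\hat S_D$ that is equivariant under the deck involution and descends. Descending produces a degree-$d/2$ map from a quotient of $S$, not a degree-$d$ map from $S$.

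What you need is the plain composition $S\to\hat S_D\to S_D$, with no equivariance required. By condition 5, place the uniform data $A_i'=A_i''=[k_i,\dots,k_i]$ (partitions of $d/2$) at the two preimages of $V_i$ and realize this collection $\mathscr D'$ over $\hat S_D$. The composite then has the following properties:
\begin{itemize}
\item its degree is $d$ and its source is connected and orientable;
\item its data over $V_i$ is $A_i'\cup A_i''=A_i$;
\item its Euler characteristic is $\tfrac d2\chi(\hat S_D)-\nu(\mathscr D')=d\chi(S_D)-\nu(\mathscr D)=\chi(S)$.
\end{itemize}
Your two-block permutation picture is exactly the monodromy of this composite.

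\textbf{The case $S_D=\mathbb P^2$ must be handled separately.} The positive-genus results of \cite{article1} you fall back on require $\chi\le 0$, so they do not cover it. Moreover, the Hurwitz conditions alone are not sufficient over $\hat S_D=\mathbb S^2$. For orientable $S$ you must realize $\mathscr D'$ via Corollary~\ref{core lemma}, and check its third hypothesis:
\[
\nu(\mathscr D')=\nu(\mathscr D)=d-\chi(S)\ge d-2=2\cdot\tfrac d2-2,
\]
which holds because $\chi(S)\le 2$. This is precisely the paper's argument. For nonorientable $S$ over $\mathbb P^2$, the relevant result is Theorem~5.1 of \cite{article1}, which needs $\nu(\mathscr D)\ge d-1$. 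This holds because $\chi(S)\le 1$; it is not an instance of the Hurwitz conditions simply being sufficient over nonorientable targets.
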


\begin{proof}
By Lemma \ref{lem:branched-cover}, it suffices to consider connected branched
coverings $S\to S_D$ of degree $d$ with branch data
\[
\mathscr{D}=\{A_1,\ldots,A_{p+1}\},
\qquad A_i=[k_i,\ldots,k_i].
\]
Conditions 1--4 are necessary by the Hurwitz conditions.
Condition 5 follows from Proposition 2.7 in \cite{article1}, since
each $A_i$ must split into two partitions of $d/2$.

For sufficiency, suppose first that $\chi(S_D)\le0$.
The required covering exists by Propositions 3.3 and 3.4 in \cite{article1}.

If $S_D=S^2$, then $S_D$ has at least two vertices.
Since $S$ is orientable, condition 3 gives
\[
\nu(\mathscr{D})=2d-\chi(S)\ge2d-2.
\]
 Corollary \ref{core lemma} therefore gives the required covering.

Now suppose $S_D=P^2$.
If $S$ is nonorientable, condition 3 gives
\[
\nu(\mathscr{D})=d-\chi(S)\ge d-1.
\]
The required covering exists by Theorem 5.1 in \cite{article1}.

If $S$ is orientable, let $d_1=d/2$.
By condition 5, each $A_i$ splits as
$A_i=[A_i',A_i'']$, where $A_i'$ and $A_i''$
are partitions of $d_1$ consisting of entries $k_i$.
The collection
\[
\mathscr{D}'
=\{A_1',A_1'',\ldots,A_{p+1}',A_{p+1}''\}
\]
satisfies
\[
\nu(\mathscr{D'})=\nu(\mathscr{D})
=d-\chi(S)\ge2d_1-2,
\]
and its total branching is even.
By Corollary \ref{core lemma} , $\mathscr{D'}$ is realizable
over $S^2$. Proposition 2.7 in \cite{article1} then gives the
required covering of $P^2$ with orientable source.

In every case, the source has the same Euler characteristic
and orientability as $S$, and hence is homeomorphic to $S$.
The conclusion follows from Lemma \ref{lem:branched-cover}.
\end{proof}

\section{Existence of Odd-Sided Tilings}

Like Lemma \ref{standard}, for tilings by congruent most curvilinear polygons of odd number of sides, we have the following lemma:

\begin{lemma}\label{standard for odd}
Any tiling by  congruent most curvilinear polygons of odd number of sides is equivalent to a tiling by congruent most curvilinear polygons associated to one of the following standard planar diagrams:
\begin{itemize}
\item $a_1b_1a_1^{-1}b_1^{-1}\cdots a_gb_ga_g^{-1}b_g^{-1}c_1c_1^{-1}\cdots c_pc_p^{-1}r$ ($g\ge1,p\ge0$).
\item $a_1a_1\cdots a_ga_gc_1c_1^{-1}\cdots c_pc_p^{-1}r$ ($g\ge1,p\ge0$).
\item $c_1c_1^{-1}\cdots c_pc_p^{-1}r$ ($p>0$).
\end{itemize}
where $r$ is the unique $r-$edge in the polygon.
\end{lemma}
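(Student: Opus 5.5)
We follow the scheme of the proof of Lemma \ref{standard}, keeping the $r$-side fixed throughout. A most curvilinear $(2m+1)$-gon has $m$ general pairs and exactly one $r$-symmetric side $r$. Its $r$-edge satisfies $s=\bar s$, so in the single tile tiling $\mc T_D$ it is glued to itself by the rotation $e^r$. It is therefore a folded edge: its midpoint becomes a cone point of order $2$, and its two endpoints are identified to the same vertex. In particular, the corners at the two ends of $r$ lie in the same class $V_j$. Every cut and paste in the proof of Lemma \ref{standard} only uses general pairs. It cuts along a new general curve $a_{i'}$ joining two corners and reglues along a general pair $a_i$. We will only choose cutting curves that do not run along $r$. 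Then $r$ survives unchanged as a side of the new prototile, and it remains an $r$-edge. By the rigidity argument, the tiling with prototile $P$ is transformed into an equivalent tiling with prototile $P'$, exactly as before.

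The plan is first to rotate the circular word so that $r$ is the last letter. We then treat the word $w\,r$ as the word $w$ for the purposes of the four steps of Lemma \ref{standard}. One point needs care. The two corners adjacent to $r$ are identified, so we regard $r$ as a ``loop'' at a single vertex class, and we will arrange that this class is the distinguished vertex $V_v$ of high degree.

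Step one reduces the degrees of all vertices but one to $1$, using the moves of Figure \ref{cutpaste2}. Each such move cuts along a diagonal $c$ whose endpoints are corners of $w$. The cut splits the polygon into two pieces, one of which contains $r$. Regluing along $b$ keeps $r$ intact as a contiguous side. In the proof of Lemma \ref{standard} the vertex that ends up of high degree is arbitrary. Here we start the induction with the vertex class at the ends of $r$ as $V_s$, so that the process ends with it as $V_v$. Steps two, three and four (Figures \ref{cutpaste3}, \ref{cutpaste4}, \ref{cutpaste5}) then rearrange subwords consisting of general pairs. In each of these figures, the dashed segments $x,y$ absorb arbitrary parts of the word, and $r$ is placed inside one of them. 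The final step converts $aba^{-1}b^{-1}cc$ into $a_1a_1a_2a_2a_3a_3$ in the nonorientable case; $r$ again sits in the untouched tail $x$. The result is one of the three listed words followed by $r$. Note that the middle corners of $c_ic_i^{-1}$ have degree $1$, and the corners adjacent to $r$ both lie in $V_v$.

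The main obstacle is the bookkeeping in step one, together with the degenerate cases. The degenerate cases are a polygon that consists only of $r$, or whose general pairs all form $cc^{-1}$ blocks; these give the third standard form. We must also check the identification of the two endpoints of $r$. When the relevant vertex has degree $1$ as a corner of $w$ alone, we must check that the corner data is consistent with placing $r$ last. We first try to show that the endpoint class of $r$ always has degree $\ge 2$ in $w\,r$, counting both corners next to $r$. Then it can always serve as $V_v$, and the degree-reduction moves never need to cut at a corner adjacent to $r$.
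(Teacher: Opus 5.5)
Your proposal is correct and follows the paper's proof: run the cut-and-paste reductions of Lemma~\ref{standard}, note that every cut is a diagonal between two corners and every regluing uses a general pair, so the $r$-side is never touched, and take the vertex class at the two ends of $r$ as the distinguished vertex whose degree is not reduced. One small slip: in step one it is $V_1,\dots,V_{v-1}$ that are reduced to degree $1$ in order, so the endpoint class of $r$ should be labeled $V_v$ from the outset, not used as the initial $V_s$; and its degree is automatically at least $2$, because both corners adjacent to $r$ lie in it, so the claim you say you would "try to show" follows at once from your own observation.
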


\begin{proof}
Apply the cut-and-paste argument of Lemma \ref{standard}. Choose the vertex incident to the $r$-side as the distinguished vertex whose degree is not reduced to one. Each cut joins two corners, and each gluing identifies
a pair of matching sides. Thus the unique $r$-side
is neither cut nor used for gluing, and the resulting
polygon still has exactly one $r$-side.
The same reductions give the stated standard forms.
\end{proof}

Given any standard planar diagram $D$  in  Lemma \ref{standard for odd}, in each case, there are $p+1$ vertices $V_1,V_2,...,V_p,V_{p+1}$ in the planar diagram. If we replace the edge $r$ in $D$ by an opposing pair $c_{p+1}c_{p+1}^{-1}$, see Figure \ref{replace1}, we get a standard planar diagram $D'$ which has $p+2$ vertices. Given natural numbers $k_1,k_2,\dots,k_p,k_{p+1}$, under the above convention, we also have the following lemma for tiling by  congruent most curvilinear polygons of odd number of sides:

\begin{figure} 
    \centering
\begin{tikzpicture}[
  >=Stealth,
  font=\small,
  line cap=round,
  edge/.style={very thick},
  point/.style={circle,fill,inner sep=1.5pt}
]
  \foreach \offset in {0,5.2} {
    \begin{scope}[xshift=\offset cm]
      \path[fill=blue!8]
        (-1.4,0)
        .. controls (-1,-.65) and (-.35,0) .. (0,0)
        .. controls (.35,0) and (1,.65) .. (1.4,0)
        -- (1.4,1.25) -- (-1.4,1.25) -- cycle;

      \path[fill=orange!12]
        (-1.4,0)
        .. controls (-1,-.65) and (-.35,0) .. (0,0)
        .. controls (.35,0) and (1,.65) .. (1.4,0)
        -- (1.4,-1.25) -- (-1.4,-1.25) -- cycle;

      \draw[edge]
        (-1.4,0)
        .. controls (-1,-.65) and (-.35,0) .. (0,0)
        .. controls (.35,0) and (1,.65) .. (1.4,0);

      \node[point,label=left:$A$] at (-1.4,0) {};
      \node[point,label=right:$B$] at (1.4,0) {};

      \node at (0,.94) {$T_1$};
      \node at (0,-.94) {$T_2$};
    \end{scope}
  }

  \node[point,label=below:$M$] at (0,0) {};
  \node at (-.8,.2) {$r$};

  \node[align=center] at (0,-1.65) {
    (a) $M$ is the half-turn centre\\
    $M$ is not a tile corner
  };

  \draw[<->,thick] (2.05,0) -- (3.15,0);

  \node[align=center] at (2.6,.65) {
    mark / erase\\
    the corner at $M$
  };

  \begin{scope}[xshift=5.2cm]
    \node[point] at (0,0) {};

    \draw
      (.32,0)
      arc[start angle=0,end angle=180,radius=.32];

    \draw
      (-.32,0)
      arc[start angle=180,end angle=360,radius=.32];

    \node at (0,.5) {$\pi$};
    \node at (0,-.5) {$\pi$};

    \node at (-.84,.14) {$c_*$};
    \node at (.88,-.15) {$c_*^{-1}$};

    \draw[->] (-1.02,-.23) -- (-.79,-.2);
    \draw[->] (1.02,.23) -- (.79,.2);

    \node[align=center] at (0,-1.65) {
      (b) $M$ is a corner of each tile\\
      $M$ has vertex type $V_{p+2}^{\,2}$
    };

  \end{scope}

  \node[align=center,font=\footnotesize] at (2.6,-2.75) {
    Local schematic: choose the two half-edges of general type,\\
    using a half-turn-equivariant perturbation if necessary.
  };
\end{tikzpicture}
\caption{Replacing the $r$-edge by an opposing pair $c_{p+1}c_{p+1}^{-1}$.}
\label{replace1}
\end{figure}

\begin{lemma}
Let $S$ be a surface, and let $D$ be a standard odd-sided planar diagram
with vertices $V_1,\dots,V_{p+1}$. Let $D'$ be obtained from $D$
by replacing the $r$-side with the opposing pair
$c_{p+1}c_{p+1}^{-1}$. The following two statements are equivalent:
\begin{enumerate}
\item $S$ admits a tiling by congruent most curvilinear polygons
associated to $D$, with vertex types
$V_1^{k_1},\dots,V_{p+1}^{k_{p+1}}$.
\item $S$ admits a tiling by congruent most curvilinear polygons
associated to $D'$, with vertex types
$V_1^{k_1},\dots,V_{p+1}^{k_{p+1}},V_{p+2}^{2}$.
\end{enumerate}
Thus the existence problem for odd-sided polygons reduces
to the even-sided case.
\end{lemma}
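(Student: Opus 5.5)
The plan is to prove the equivalence geometrically, tile by tile, using the local picture in Figure~\ref{replace1}. Both directions come from a single observation. In a side-to-side tiling by congruent copies of a most curvilinear polygon associated to $D$, every $r$-edge is glued to another $r$-side. Since $r=\bar r$ and $r\ne r^{-1}$, the gluing is forced to be by the half-turn about the midpoint $M$ of the edge. Thus the two tiles $T_1,T_2$ sharing that edge are exchanged by a half-turn $\rho_M$ of the surface, and $\rho_M$ fixes $M$. We also use rigidity, as in the argument leading to Lemma~\ref{symmetry}: the local configuration around any tile is uniquely determined by $D$.

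For $(1)\Rightarrow(2)$, I would first choose the shape of the $r$-edge conveniently. Replace the $r$-curve in the prototile $P$ by a half-turn symmetric curve $\sigma$ through $M$, so that each of its two halves, from an endpoint to $M$, is of general type. This can be done by a half-turn-equivariant perturbation that fixes the endpoints and $M$. Performing the replacement on every $r$-edge of the tiling is consistent, because each such edge is a copy of the same edge of $P$. The result is again a tiling by congruent tiles on the same metric surface. Next, declare $M$ to be a corner of each tile. The two half-edges $A M$ and $M B$ then form a pair $c_{p+1},c_{p+1}^{-1}$: the half-turn $\rho_M$ carries the half-edge $AM$ seen from $T_1$ to $MB$ seen from $T_2$ with reversed direction, which is exactly the opposing pattern $c_{p+1}c_{p+1}^{-1}$ with $M$ as the middle corner. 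The new prototile is most curvilinear with diagram $D'$. The angle of each tile at $M$ is $\pi$, and two tiles meet there, so the vertex at $M$ has type $V_{p+2}^2$. The other vertices are unchanged.

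For $(2)\Rightarrow(1)$, I would reverse the construction. In a tiling with vertex type $V_{p+2}^2$, each vertex over $V_{p+2}$ is the middle corner of $c_{p+1}c_{p+1}^{-1}$ and meets exactly two tiles, each with corner angle $\pi$. By rigidity, the isometry taking one tile to the other, which is the element $\delta(V_{p+2})$, has order $2$ and fixes $M$. It is therefore the half-turn about $M$ (orientation reasons exclude a reflection, since a reflection would fix a curve rather than an isolated point). The union of the two half-edges through $M$ is a single smooth-angle curve of total angle $\pi$ on each side, invariant under this half-turn. Erasing $M$ as a corner therefore produces an edge of $r$-type, possibly after an equivariant perturbation so that it is not straight. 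The prototile becomes most curvilinear with diagram $D$, and the vertex types $V_1^{k_1},\dots,V_{p+1}^{k_{p+1}}$ are untouched.

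The main obstacle is in this reverse direction: checking that the two halves $c_{p+1}$ and $c_{p+1}^{-1}$ genuinely glue into one $r$-symmetric edge, and not an $h$-symmetric or straight one. One must also ensure that the perturbations can be made tile-consistently, so that congruence is preserved and no unwanted symmetry appears. I would handle this by performing every modification on the prototile itself, equivariantly with respect to the half-turn at $M$, and then transporting it to all tiles via $\Gamma$, whose action is simply transitive on tiles. This makes consistency automatic.

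Finally, the existence question for $D$ is thereby reduced to Theorem~\ref{case12} applied to $D'$ with the extra data $k_{p+2}=2$. This is the final assertion of the lemma.
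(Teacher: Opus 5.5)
Your argument is correct and follows the paper's proof. In one direction you mark the half-turn centre of each $r$-edge as a corner, after a half-turn-equivariant perturbation making the halves general. In the other you use that the two angle-$\pi$ corners at a $V_{p+2}^2$ vertex are exchanged by the local half-turn, so erasing that corner recovers an $r$-side.

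One small addition is needed in $(1)\Rightarrow(2)$: your perturbation should also make the new pair $c_{p+1}$ noncongruent to the existing pairs, since the definition of most curvilinear requires distinct pairs. The paper's perturbation arranges this explicitly. In $(2)\Rightarrow(1)$, by contrast, no perturbation is needed: general-type halves already rule out the union being straight or $h$-symmetric.
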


\begin{proof}
$(1)\Rightarrow(2)$.
Mark the half-turn centre of each $r$-edge as a corner,
thereby replacing the $r$-side by the opposing pair
$c_{p+1}c_{p+1}^{-1}$. If necessary, apply a small half-turn-equivariant perturbation,
consistently across all tiles, so that the half-edges are of general
type and the new edge pair is noncongruent to the existing pairs. This gives the required tiling
associated to $D'$ with the additional vertex type
$V_{p+2}^{2}$.

$(2)\Rightarrow(1)$.
The vertex type $V_{p+2}^{2}$ consists of two corners
of angle $\pi$. By the side matching, the local half-turn
exchanges the two half-edges. Removing the marked corner
therefore combines them into a single $r$-symmetric side
and recovers a tiling associated to $D$.
\end{proof}

\section{Existence of Minimal tilings}

Theorem \ref{case12} gives a necessary and sufficient condition for when a tiling by most curvilinear polygons exists. Among all the tilings, many tilings descend to a tiling with fewer tiles, so our main interest is in minimal tilings . We first introduce some terminology and establish  lemmas that are needed.  \\

Definition:
Given \( 1 < l < d \) and \( l \mid d \), let $\Sigma_d^l$ be the collection of all the partitions of $\{1,2,...,d\}$ into $d/l$ subsets (the blocks) such that each block has $l$ elements.
We call a partition $\Pi$ is a \textit{block decomposition of order \(l\)} for \( \tau \in \mathfrak{S}_d \) if $\Pi\in\Sigma_d^l$ and \( \tau \) induces a well-defined permutation \( \hat{\tau} \) of the blocks in $\Pi$. We also write $\tau \in \Pi$ in this case. We say that \( G < \mathfrak{S}_d \) has a block decomposition of order \(l\) if there is a common such decomposition for all the elements of \(G\). We use $\{a_1,a_2,...,a_l\}$ to denote blocks of $\Pi$. For example, $\Pi=\{\{1,3,4\},\{2,6,5\}\}$ represents an element of $\Sigma_6^3$, it has 2 blocks and the first block contains elements $1,3,4$.

\begin{lemma} \label{lem:5.2}
Let \( \tau \) have cyclic structure \((d_1, \dots, d_m)\). Then \( \tau \) admits a block decomposition of order \(l\) if and only if \((d_1, \dots, d_m)\) can be partitioned into sets \( D_1, \dots, D_t \) and there exist integers \( p_1, \dots, p_t \geq 1 \) such that for all \( j \)
\[
p_j \mid x \quad \forall x \in D_j \quad \text{and} \quad \sum_{x \in D_j} \frac{x}{p_j} = l.
\]
In this case \( \hat{\tau} \) has cyclic structure \((p_1, \dots, p_t)\). (Also see Lemma 5.2 in \cite{article4})
\end{lemma}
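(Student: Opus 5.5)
The plan is to analyse how a single cycle of $\tau$ interacts with a block decomposition $\Pi\in\Sigma_d^l$ preserved by $\tau$, and then assemble cycles according to the cycles of $\hat\tau$. For necessity, suppose $\tau\in\Pi$. The induced permutation $\hat\tau$ acts on the $d/l$ blocks; write its cycles as $\hat C_1,\dots,\hat C_t$ of lengths $p_1,\dots,p_t$. For each $j$, the union $U_j$ of the blocks in $\hat C_j$ is $\tau$-invariant, so it is a union of cycles of $\tau$. Let $D_j$ be the multiset of lengths of those cycles; this partitions $(d_1,\dots,d_m)$. The key observation is the following. Take a cycle $c$ of $\tau$ inside $U_j$, of length $x$, and an element $a\in c$ lying in a block $B$. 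Then $\tau^k a\in B$ if and only if $p_j\mid k$, because $\hat\tau^k B=B$ exactly when $p_j\mid k$. Hence $c$ visits the $p_j$ blocks of $\hat C_j$ cyclically and meets each in exactly $x/p_j$ elements; in particular $p_j\mid x$. Each block of $\hat C_j$ therefore has $\sum_{x\in D_j}x/p_j$ elements, and this sum must equal $l$. This gives the conditions, and also the cycle type of $\hat\tau$.

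For sufficiency, given the grouping $D_j$ and integers $p_j$, I would build $\Pi$ directly. For each $j$, form $p_j$ blocks $B_{j,0},\dots,B_{j,p_j-1}$. For each cycle $(a_0\,a_1\,\dots\,a_{x-1})$ of $\tau$ with $x\in D_j$, put $a_i$ into $B_{j,\,i \bmod p_j}$. This is well defined because $p_j\mid x$: going from $a_{x-1}$ to $a_0$ increases the index by one modulo $p_j$, consistent with $x\equiv0\pmod{p_j}$. Each block receives $x/p_j$ elements from each such cycle, so it has $\sum_{x\in D_j}x/p_j=l$ elements. Thus $\Pi\in\Sigma_d^l$; the total count is consistent since $\sum_j p_j l=d$. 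By construction $\tau(B_{j,i})=B_{j,i+1}$ with indices mod $p_j$, so $\tau$ permutes the blocks and $\hat\tau$ has cycles of lengths $p_1,\dots,p_t$.

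There is no real obstacle here. The only point needing care is the claim in the necessity direction that a cycle distributes evenly among the blocks of its $\hat\tau$-orbit, which uses the periodicity of $k\mapsto$ the block of $\tau^k a$ with exact period $p_j$. The conditions $1<l<d$ play no role beyond the definition of $\Sigma_d^l$.
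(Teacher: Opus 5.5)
Your proof is correct and follows essentially the same route as the paper: both work one $\hat\tau$-cycle of length $p$ at a time and show that every $\tau$-cycle in the union of those $p$ blocks winds around them, meeting each block in exactly $x/p$ elements, so that $p\mid x$ and $\sum x/p=l$. Your periodicity argument ($\tau^k a\in B$ iff $p_j\mid k$) replaces the paper's explicit relabeling of block elements as $r_{i,b}$. Your construction $a_i\mapsto B_{j,\,i \bmod p_j}$ writes out the converse, which the paper only says is proved along the same lines.
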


\begin{proof}
Suppose the block decomposition exists, and take a cycle \((B_1, \dots, B_p)\) of \( \hat{\tau} \), with \( B_j = \{r_{j,1}, \dots, r_{j,l}\} \). Since \( \tau(B_j) = B_{j+1} \) for \( j = 1, \dots, p-1 \), up to changing notation we have
\[
\tau(r_{1,1}) = r_{2,1} \quad \tau(r_{2,1}) = r_{3,1} \quad \cdots \quad \tau(r_{p-1,1}) = r_{p,1}.
\]
Now we either have \( \tau(r_{p,1}) = r_{1,1} \) or, up to changing notation, \( \tau(r_{p,1}) = r_{1,2} \). Proceeding similarly we see that \( \tau \) contains a cycle
\[
(r_{1,1}, \dots, r_{p,1}, r_{1,2}, \dots, r_{p,2}, \dots, r_{1,b_1}, \dots, r_{p,b_1})
\]
of order \( x_1 = p \cdot b_1 \). Repeating the same argument from \( r_{1,b_1+1} \) and so on we find cycles of length \( x_1, \dots, x_s \) of \( \tau \) such that \( p \mid x_i \) and \( \sum_{i=1}^s \frac{x_i}{p} = l \). The conclusion follows by considering the totality of the cycles of \( \hat{\tau} \), and the opposite implication is proved along the same lines.
\end{proof}

In particular, if $\tau$ has cycle type $[k,k,...,k]$, $\Pi$ is a block decomposition of order $l$ for $\tau$ with $p_1=p_2=...=p_t=k$ in the above lemma,  we say $\Pi$ is a \textit{block decomposition of order $l$ associated to $\tau$}, and write $\tau \prec \Pi$. Clearly if $\tau \prec \Pi$, then $\tau \in \Pi$. 

\begin{corollary} \label{cor5}
Let \( \theta : \pi_1(S-n\cdot\{pt\}) \to \mathfrak{S}_d \) be associated to Lemma \ref{lemma2.1} to a branched covering realizing a branch datum \((\tilde S, S, n, d, (d_{ij}))\). Then the covering is decomposable if and only if \(\operatorname{Im}(\theta)\) has a block decomposition. Moreover, if \(\operatorname{Im}(\theta)\) has a block decomposition of order \( l \) with the decomposition of \( \theta(e_i) \) corresponding to a partition of \( (d_{ij}) \) into \( D_{i1}, \ldots, D_{it_i} \) and integers \( p_{i1}, \ldots, p_{it_i} \) then the covering factors through coverings realizing branch data  (See Corollary 5.3 in \cite{article4})

\[
\left( \tilde S, S', t_1 + \ldots + t_n, l, \left( \left( \frac{x}{p_{ij}} \right)_{x \in D_{ij}} \right)_{i=1,\ldots,n, \; j=1,\ldots,t_i} \right)
\]

\[
\left( S', S, n, d/l, \left( (p_{ij})_{j=1,\ldots,t_i} \right)_{i=1,\ldots,n} \right).
\]
\end{corollary}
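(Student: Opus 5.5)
We prove Corollary \ref{cor5} through the correspondence between connected degree-$d$ branched coverings and transitive permutation representations (Lemma \ref{lemma2.1}), combined with the block-structure computation of Lemma \ref{lem:5.2}.

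Fix a base point $x_0\in S-n\cdot\{pt\}$ and identify the fibre over $x_0$ with $\{1,\dots,d\}$. Let $\theta$ be the monodromy of the covering, and write $H=\operatorname{Im}(\theta)$, which is transitive. By decomposable we mean that the covering factors as $\tilde S\to S'\to S$ with both factors of degree strictly between $1$ and $d$.

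For the forward direction, suppose the covering factors as $\tilde S\xrightarrow{g} S'\xrightarrow{h} S$ with $\deg h=d/l$. The fibre of $h$ over $x_0$ has $d/l$ points. Their $g$-preimages partition $\{1,\dots,d\}$ into $d/l$ blocks of size $l$. Path lifting commutes with the factorization: lifting a loop to $\tilde S$ and projecting by $g$ gives the lift to $S'$. Hence each $\theta(\gamma)$ sends blocks to blocks, and $\hat\theta$ is the monodromy of $h$. So $H$ has a block decomposition of order $l$.

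For the converse, suppose $H$ preserves a block system $\Pi\in\Sigma_d^l$. Then $\hat\theta:\pi_1(S-n\cdot\{pt\})\to\mathfrak S_{d/l}$ is a homomorphism, and it is transitive because $H$ is. It defines an unbranched covering $S'_0\to S-n\cdot\{pt\}$. Concretely, $S'_0=\widetilde{(S-n\cdot\{pt\})}\times_{\pi_1}\Pi$. The map $\{1,\dots,d\}\to\Pi$ sending a point to its block is $\pi_1$-equivariant. Therefore it induces a covering $\tilde S_0\to S'_0$ of degree $l$, where $\tilde S_0=\tilde S-$(preimages of the branch points), and $\tilde S_0\to S'_0$ is connected.

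Next we fill in the punctures. Over a small punctured disc around the $i$-th branch point, $S'_0$ is a disjoint union of punctured discs, one for each cycle of $\hat\theta(e_i)$. Each can be capped off, which extends $h$ to a branched covering $S'\to S$. Capping each punctured disc of $\tilde S_0$ likewise extends $g$ to a branched covering, because a finite covering of a punctured disc is a disjoint union of standard maps $z\mapsto z^a$. Thus $\tilde S\to S'\to S$, and the degrees $l$ and $d/l$ lie strictly between $1$ and $d$ since $1<l<d$.

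The branch data are then read off from Lemma \ref{lem:5.2}. Take a cycle of $\hat\theta(e_i)$ of length $p_{ij}$, corresponding to a group $D_{ij}$ of cycles of $\theta(e_i)$. It gives one point of $S'$ over the $i$-th branch point, where $h$ has local degree $p_{ij}$. This yields the datum $\bigl((p_{ij})_j\bigr)_i$ for $h$, which has $n$ branch points. Over that point of $S'$, $g$ is branched according to how the cycles in $D_{ij}$ wind around it. A cycle of $\theta(e_i)$ of length $x$ is a lift of $e_i^{x}$, which is a lift of $(\text{small loop in }S')^{x/p_{ij}}$. So the local degree of $g$ is $x/p_{ij}$ for $x\in D_{ij}$, and these sum to $l$, as Lemma \ref{lem:5.2} requires. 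This gives the datum for $g$, with $t_1+\dots+t_n$ branch points.

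The main obstacle is the converse direction, specifically extending the covering across the punctures. We must check carefully that the intermediate space $S'$ is a closed connected surface and that the local degree bookkeeping matches Lemma \ref{lem:5.2}. When $S$ is nonorientable, we also need to confirm that the construction, which depends only on $\pi_1$, is unaffected.
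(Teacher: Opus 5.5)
Your proof is correct and is essentially the standard argument behind the result, which the paper does not prove itself but imports from \cite{article4} (Corollary~5.3 there): intermediate coverings correspond to block systems of the transitive monodromy group, and the branch data of the two factors are read off cycle by cycle exactly as in Lemma~\ref{lem:5.2}. Two points are worth stating explicitly rather than leaving implicit. First, in the forward direction no point of $h^{-1}(x_0)$ or $(h\circ g)^{-1}(x_0)$ can be critical for $h$ or $g$, since that would make $x_0$ a branch value of $h\circ g$; this is what guarantees blocks of size exactly $l$ and unbranched path lifting over $S-n\cdot\{pt\}$. Second, the nonorientable case needs no extra care, because covering theory of the punctured surface and the capping of punctured discs are local and do not use orientation.
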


We will frequently use the following lemma \cite{article1}:
\begin{lemma}\label{lemma3.1}
If \(\alpha \in \mathfrak{S}_d\) with \(\nu(\alpha) = d - r\), then for any \(t \geq 0\) with \(r + 2t \leq d\) one may express \(\alpha\) as the product of  an \((r + 2t)\)-cycle and a \(d\)-cycle.
\end{lemma}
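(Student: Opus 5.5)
We prove the lemma by an explicit construction. First we build a factorization with an $r$-cycle, which handles $t=0$. Then we enlarge the short cycle by two points at a time while keeping the other factor a $d$-cycle.

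Throughout we use two standard facts. Multiplying a permutation by a transposition $(x\,y)$ merges two cycles into one if $x,y$ lie in different cycles, and splits one cycle into two if they lie in the same cycle. Also, conjugating a factorization $\alpha=\gamma\sigma$ gives $\alpha=\sigma(\sigma^{-1}\gamma\sigma)$, so the order of the two factors does not matter for the cycle types. So it suffices to find an $(r+2t)$-cycle $\sigma$ such that $\alpha\sigma$ is a $d$-cycle. Then $\alpha=(\alpha\sigma)\sigma^{-1}$ is a product of a $d$-cycle and an $(r+2t)$-cycle.

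\emph{Base case $t=0$.} Let $C_1,\dots,C_r$ be the cycles of $\alpha$, including fixed points. Pick $x_i\in C_i$ and set $\sigma_0=(x_1\,x_2\,\cdots\,x_r)$. Write $\sigma_0=(x_1\,x_r)(x_1\,x_{r-1})\cdots(x_1\,x_2)$ in whatever order matches the composition convention. Multiplying $\alpha$ successively by these transpositions merges the cycles one at a time, because at each step the next $x_j$ still lies in a different cycle from the growing cycle containing $x_1$. Hence $\alpha\sigma_0$ is a single $d$-cycle.

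\emph{Inductive step.} Suppose $\sigma_{t-1}$ is an $(r+2t-2)$-cycle with $\gamma_{t-1}=\alpha\sigma_{t-1}$ a $d$-cycle, and suppose $r+2t\le d$. Then there are at least two points $b,e$ outside the support of $\sigma_{t-1}$. We look for $\sigma_t=\sigma_{t-1}(a\,b)(c\,e)$, where $a$ is in the support of $\sigma_{t-1}$ and $c$ is in the support of $\sigma_{t-1}(a\,b)$. Multiplying a cycle by a transposition joining one of its points to an outside fixed point lengthens it by one. So $\sigma_t$ is an $(r+2t)$-cycle.

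On the other side, $\gamma_{t-1}(a\,b)$ splits the $d$-cycle into two cycles $Q_1\ni a$ and $Q_2\ni b$, since $a,b$ lie in the same cycle. We then need $c$ and $e$ in different pieces, so that multiplying by $(c\,e)$ merges them back into a $d$-cycle.

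\emph{Main obstacle.} The hard step is choosing $a$, $b$, $c$, $e$ so that this final merge works. The freedom is as follows. We may choose $b$ and $e$ among the points outside the support, and their position along the $d$-cycle $\gamma_{t-1}$ decides which of $Q_1,Q_2$ they fall in. We may also choose $c$ anywhere in the support of $\sigma_{t-1}(a\,b)$, which already contains $a\in Q_1$ and $b\in Q_2$. So we first try $c=a$ or $c=b$, whichever lies in a different piece from $e$. Since $c=a\in Q_1$ and $c=b\in Q_2$ are both always available, this choice should always succeed.

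The remaining work is bookkeeping. We must check that $c=a$ or $c=b$ is compatible with the composition convention, so that $\sigma_{t-1}(a\,b)(c\,e)$ really is a single cycle. We must also check that $\gamma_{t-1}(a\,b)(c\,e)$ really equals $\alpha\sigma_t$.

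As a consistency check, the Riemann–Hurwitz count $\nu(\alpha)+(d-1)+(r+2t-1)=2d-2+2t$ is even and at least $2d-2$, so no parity obstruction arises. Induction on $t$ then completes the proof.
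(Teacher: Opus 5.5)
Your argument is correct, but it follows a different route from the paper's. The paper first puts $\alpha$ in the normal form $(1,\ldots,a_1)(a_1+1,\ldots,a_2)\cdots(a_{r-1}+1,\ldots,d)$. It then writes down a single explicit $(r+2t)$-cycle $\beta=(1,a_1+1,\ldots,a_{r-1}+1,b_1,\ldots,b_{2t})$ with $b_1<\cdots<b_{2t}$. Factoring $\beta=\beta_1\beta_0$ with $\beta_0=(1,a_1+1,\ldots,a_{r-1}+1)$ gives $\beta_0\alpha=(1,2,\ldots,d)$. The product $(1,b_1,\ldots,b_{2t})(1,2,\ldots,d)$ is then computed directly to be a $d$-cycle, using that this cycle has odd length $2t+1$ and increasing entries.

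You instead handle $t=0$ by star transpositions, and then enlarge the short cycle two points at a time using only the merge/split rule for transpositions. The steps you left hedged are in fact immediate:
\begin{itemize}
\item $a$ and $b$ lie in different cycles of $\gamma_{t-1}(a\,b)$, because multiplying by $(a\,b)$ again restores the single cycle $\gamma_{t-1}$.
\item For either choice $c\in\{a,b\}$, the point $c$ lies on the $(r+2t-1)$-cycle $\sigma_{t-1}(a\,b)$ while $e$ is fixed by it. So $\sigma_t$ is an $(r+2t)$-cycle under either composition convention.
\item $\gamma_{t-1}(a\,b)(c\,e)=\alpha\sigma_t$ is just associativity.
\end{itemize}
When $r=1$, regard $(x_1)$ as having support $\{x_1\}$, so that $a=x_1$ is available at the first step.

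Your route is convention-free and needs no explicit cycle computation. The paper's route yields a canonical $\beta$, and that is what later proofs actually use. In Theorems~\ref{onevertex} and~\ref{22} the specific cycle $\beta=(1,k+1,\ldots,(s-1)k+1,2,\ldots,k,\ldots)$ is taken, and the positions of its entries are used to exclude block decompositions. Your choice-dependent inductive construction would not supply that.
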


\begin{proof}
One may take
\[
\alpha = (1, \ldots, a_1)(a_1 + 1, \ldots, a_2) \cdots (a_{r-1} + 1, \ldots, a_r)
\]
where \(1 \leq a_1 < a_2 < \cdots < a_r = d\). Let \(\beta = (1, a_1 + 1, a_2 + 1, \ldots, a_{r-1} + 1, b_1, \ldots, b_{2t})\) where \(b_1 < b_2 < \cdots < b_{2t}\) are not in the set \(\{1, a_1 + 1, \ldots, a_{r-1} + 1\}\). Then \(\gamma = \beta\alpha\) is a \(d\)-cycle: Let \(\beta_0 = (1, a_1 + 1, \ldots, a_{r-1} + 1)\) and \(\beta_1 = (1, b_1, \ldots, b_{2t})\), so \(\beta = \beta_1 \beta_0\); then
\[
 \beta\alpha =\beta_1 \beta_0\alpha= (1, b_1, \ldots, b_{2t})(1, 2, \ldots, d)
\]

$= \begin{aligned}[t]
&\bigl(1,\ldots,b_1-1,b_2,\ldots,b_3-1,
b_4,\ldots,b_{2t-1}-1,b_{2t},\ldots,d,\\
&\qquad b_1,\ldots,b_2-1,b_3,\ldots,b_{2t-2}-1,
b_{2t-1},\ldots,b_{2t}-1\bigr).
\end{aligned}
$

Thus \(\alpha = \beta^{-1}\gamma \) has the required form.
\end{proof}

\begin{corollary}\label{cor3.2}
Every element in the alternating group on \( d \) symbols can be expressed as  

\begin{enumerate}
\item[(i)] a product of two \( d \)-cycles,  
\item[(ii)] a commutator \( \alpha \beta \alpha^{-1} \beta^{-1} \) with \( \alpha \) a \( d \)-cycle,  
\item[(iii)] a product of two squares \( \alpha^2 \beta^2 \) with \( \alpha \beta \) a \( d \)-cycle.
\end{enumerate}

\begin{proof}
If \( \gamma \) lies in the alternating group \( \nu(\gamma) = d - r \) is even. By Lemma \ref{lemma3.1},  
\( \gamma = \sigma \tau \) where \( \sigma \) and \( \tau \) are \( d \)-cycles. Now \( \tau \) is conjugate to \( \sigma^{-1} \), so \( \tau = \beta \sigma^{-1} \beta^{-1} \), so  
$\gamma = \sigma \beta \sigma^{-1} \beta^{-1} $. Finally $\tau $ is conjugate to $\sigma$ so $ \gamma = \sigma ( \alpha \sigma \alpha^{-1}) = (\sigma \alpha)^2 (\alpha^{-1}) ^2 $ for some $\alpha$
and \( (\sigma \alpha) \alpha^{-1} = \sigma \) is a \( d \)-cycle.
\end{proof}

\end{corollary}

Now let $k_1,...,k_{p+1}\ge2$ be integers. Given planar diagram $D$ such that $\chi (S_D)\le 0$, by Theorem \ref{case12}, for any positive integer $s$, there is some tiling on some surface by congruent most curvilinear polygons associated to $D$ with $sd'$ tiles, such that the vertices are $V_1^{k_1},V_2^{k_2},\dots,V_{p+1}^{k_{p+1}}$. Besides, the tiling with $d'$ tiles is minimal. But it is still unknown whether there is any minimal tiling with $sd'$ tiles for $s>1$. Given a tiling with $sd'$ tiles on some surface $S$, by the remark following Lemma \ref{lemma2.1}, there is a map $\theta:\pi_1(S_D-\{V_1,...,V_{p+1}\})\to \mathfrak{S}_{sd'}$. If the tiling is not minimal, suppose the tiling descends through an unbranched covering of degree \(l>1\) to a tiling with \(n\) tiles. The latter has the same vertex types, so \(n=q\lambda\) and its total branching \(q\sum_i(\lambda-\lambda/k_i)\) is even. By the definition of \(d'\), this implies \(d'\mid n\). Since \(sd'=ln\), we obtain \(l\mid s\). So by Corollary \ref{cor5}, there is some $l>1$ and $l\mid s$, such that $Im(\theta)$ has a block decomposition of order $l$, say $\Pi \in \Sigma_{sd'}^l$. Besides, if $\theta(e_i)$ corresponds to a partition of  $(k_i,...,k_i)$ into $D_{i1}, \ldots, D_{it_i}$ and integers $p_{i1}, \ldots, p_{it_i}$, then $p_{i1}= \ldots =p_{it_i}=k_i$, that is $\theta(e_i)\prec \Pi$. Note that in this case, each $D_{ij}$ consists of exactly $l$ disjoint cycles of $\theta(e_i)$. Conversely, suppose such a block decomposition $\Pi$ of order $l>1$
exists. By Corollary \ref{cor5}, the branched covering factors as
$S\to S'\to S_D$, with the first map of degree $l$.
Since $\theta(e_i)\prec\Pi$ for every $i$, all its local degrees
are $k_i/k_i=1$, so $S\to S'$ is unbranched.
The tilewise isometries are compatible with the side identifications and hence induce a metric of the same constant curvature on \(S'\). Since the local degrees of \(S\to S'\) are all one, this metric is smooth at the vertices, and \(S\to S'\) is a local isometry. Thus the original tiling descends to a tiling on \(S'\) with \(sd'/l\) congruent tiles and is not minimal. We have proved the following criterion:

\begin{theorem}\label{minimal criterion}[Minimal tiling criterion]
Let $D$ be a planar diagram with vertices $V_1,\dots,V_{p+1}$
and $\chi(S_D)\leq 0$, and let $k_1,\dots,k_{p+1}\geq 2$
be integers. Let $d'$ be the least positive integer such that
$k_i\mid d'$ for every $i$ and
\[
\sum_{i=1}^{p+1}\left(d'-\frac{d'}{k_i}\right)
\]
is even.

Let $\mathcal{T}$ be a tiling associated to $D$ with $sd'$ congruent
most curvilinear tiles and vertex types
$V_1^{k_1},\dots,V_{p+1}^{k_{p+1}}$, where $s\geq 1$
is an integer. Let
\[
\theta:\pi_1(S_D\setminus\{V_1,\dots,V_{p+1}\})
\longrightarrow \mathfrak{S}_{sd'}
\]
be its associated monodromy representation.

Then $\mathcal{T}$ is not minimal if and only if $\operatorname{Im}(\theta)$
admits a block decomposition $\Pi$ of order $l>1$, with $l\mid s$,
such that $\theta(e_i)\prec\Pi$ for every $i$.
\end{theorem}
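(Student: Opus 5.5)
The proof has two directions, both run through the branched covering $f:S\to S_D$ of degree $sd'$ that Lemma~\ref{lem:branched-cover} attaches to $\mathcal T$, and through its monodromy $\theta$. Throughout, a descent of tilings will be identified with a factorization $S\to S'\to S_D$ whose first map is unbranched, and such factorizations will be identified with block decompositions of $\operatorname{Im}(\theta)$ via Corollary~\ref{cor5}.

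\textbf{Necessity.} Suppose $\mathcal T$ descends through an unbranched covering $q:S\to S'$ of degree $l>1$ to a tiling $\mathcal T'$ with $n$ tiles. Since $q$ maps tiles isometrically onto tiles and preserves sides and corners, $\mathcal T'$ has the same planar diagram $D$. Because $q$ is a local isometry near the vertices, $\mathcal T'$ also has the same vertex types $V_i^{k_i}$.

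Applying Lemma~\ref{lem:branched-cover} to $\mathcal T'$ gives a branched covering $f':S'\to S_D$ of degree $n$. The canonical tilewise maps satisfy $f=f'\circ q$. Condition~1 and the parity condition of Theorem~\ref{case12} hold for $\mathcal T'$, so $n$ is a common multiple of the $k_i$ with even total branching. By the minimality of $d'$ among such integers, $d'\mid n$. (If $n=q\lambda$, the branching is $q\sum_i(\lambda-\lambda/k_i)$, and this is even exactly when $d'\mid n$.) From $sd'=ln$ we then get $l\mid s$.

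The factorization $f=f'\circ q$ produces the block decomposition $\Pi$ of order $l$: its blocks are the fibres of $q$ over the fibre of $f'$ at the base point. These blocks are preserved by the whole monodromy group $\operatorname{Im}(\theta)$, as in Corollary~\ref{cor5}. The cycle structure of $\hat\theta(e_i)$ records the local degrees of $f'$ over $V_i$, which are all $k_i$; equivalently, $q$ has local degree $1$ everywhere. In the notation of Lemma~\ref{lem:5.2} this means every $p_{ij}=k_i$, that is, $\theta(e_i)\prec\Pi$.

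\textbf{Sufficiency.} Given such a $\Pi$, Corollary~\ref{cor5} factors $f$ as $S\xrightarrow{g}S'\xrightarrow{h}S_D$, with $\deg g=l$. Since every $p_{ij}=k_i$, the local degrees of $g$ are $x/p_{ij}=k_i/k_i=1$, so $g$ is unbranched. The map $h$ has local degree $k_i$ everywhere over $V_i$.

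To obtain a metric on $S'$, I would pull back the tiles of $\mathcal T_D$ through $h$. More directly, I would push the metric of $S$ forward through $g$ using the tilewise isometries. These agree along side identifications because every tile of $\mathcal T$ is congruent to the fixed prototile and $f$ is tilewise the canonical map. Alternatively, I would apply Lemma~\ref{lem:branched-cover}(2) to $h$ with the same orbifold metric on $S_D$ that underlies $\mathcal T$. Then $g$ is a local isometry, smooth at the vertices because its local degrees are $1$. So $\mathcal T$ descends to a tiling of $S'$ with $sd'/l$ tiles and is not minimal.

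The main obstacle is identifying the two metrics. In the second approach, the metric that Lemma~\ref{lem:branched-cover}(2) produces on $S'$ must pull back under $g$ to exactly the metric of $\mathcal T$, not merely to some constant-curvature metric. I would handle this by observing that $\mathcal T$'s metric is itself $f^*$ of the orbifold metric on $S_D$, the one in which the tile is the prototile $P$ of $\mathcal T$. Then $g^*h^*=f^*$ settles it. This gives the stated equivalence.
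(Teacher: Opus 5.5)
Your proof is correct and follows the paper's own argument. For necessity you show that the descended tiling has the same vertex types, deduce $d'\mid n$ and hence $l\mid s$, and use the factorization (Corollary~\ref{cor5}) with all $p_{ij}=k_i$ to get $\theta(e_i)\prec\Pi$. For sufficiency you factor $f$ with an unbranched first map and induce the metric on $S'$ from the tilewise isometries. Your added remark is a clean way to make the paper's one-line metric step precise: the metric of $\mathcal T$ is $f^*$ of the cone metric on $S_D$ obtained by gluing $P$, so $g^*h^*=f^*$.
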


Firstly we suppose $p\ge1$, so there are at least two vertices in the planar diagram $D$. We have the following lemma:

\begin{lemma}\label{lemma_1}
Suppose $ A_i=[k_i,...,k_i]$ is a partition of $sd'$. Given $\theta(e_1)\in A_1$. If one of $k_1,k_2$ is bigger than 2, then for all sufficiently large integers $s$, there exist some $\theta(e_2)\in A_2$ such that for any $\Pi \in \Sigma_{sd'}^s$ satisfying $\theta(e_1) \prec \Pi$, $\theta(e_2) \not \prec \Pi $. 

\end{lemma}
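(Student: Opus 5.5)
The approach is to reduce the non-existence of a common block decomposition to a cycle-length obstruction on the product $\theta(e_1)\theta(e_2)$, and then to construct a suitable $\theta(e_2)$ explicitly. I will not use a counting argument. A union bound over all $\Pi\in\Sigma^s_{sd'}$ compatible with $\theta(e_1)$ looks delicate: the number of $\theta(e_2)\in A_2$ compatible with a fixed $\Pi$ has the same leading $s\log s$ growth as $|A_2|$, because $\theta(e_2)^{k_2}=1$ forces the block bijections around each cycle of $\hat\theta(e_2)$ to compose to the identity. The comparison would then rest on exponential-order terms, which I would rather avoid.

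\textbf{Step 1 (obstruction).} Suppose $\theta(e_1)\prec\Pi$ and $\theta(e_2)\prec\Pi$. In particular both lie in $\Pi$, so $\gamma=\theta(e_1)\theta(e_2)$ induces a permutation $\hat\gamma$ of the $d'$ blocks, each of size $s$. Apply Lemma~\ref{lem:5.2} to $\gamma$. Every cycle of $\gamma$ has length $x=p\cdot b$, where $p\le d'$ is the length of a cycle of $\hat\gamma$ and $b\le s$. Hence, if $\gamma$ has a cycle of prime length $q$ with $q>\max(s,d')$, we get a contradiction: $p\mid q$ and $p\le d'<q$ force $p=1$, so $q=b\le s$. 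So it suffices to find $\theta(e_2)\in A_2$ such that $\theta(e_1)\theta(e_2)$ contains a $q$-cycle for some prime $q$ with $\max(s,d')<q\le sd'$. Once $s\ge d'$, Bertrand's postulate gives such a prime in $(s,2s]$, using $d'\ge2$. This is where ``$s$ sufficiently large'' enters.

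\textbf{Step 2 (reduction to an existence problem).} All elements of $A_1$ are conjugate in $\mathfrak S_{sd'}$. If $\alpha\in A_1$ and $\beta\in A_2$ satisfy that $\alpha\beta$ has a $q$-cycle, then conjugating by $\sigma$ with $\sigma\alpha\sigma^{-1}=\theta(e_1)$ turns $\sigma\beta\sigma^{-1}$ into the required $\theta(e_2)$. So the task is to find a pair $(\alpha,\beta)\in A_1\times A_2$ whose product has a cycle of length $q$. My first attempt is to prescribe a third partition $C$ of $sd'$ containing the part $q$ and to realize $\{A_1,A_2,C\}$ over $S^2$. Realizability would give $\alpha\beta\gamma'=1$ with $\gamma'\in C$, and then $\alpha\beta=\gamma'^{-1}$ has a $q$-cycle. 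Transitivity is not needed here. The natural choice is $C=[q,c_2,\dots]$, with the remaining parts chosen so that the total branching is even. The Riemann--Hurwitz condition $\nu\ge 2sd'-2$ for a possibly disconnected cover can be handled piecewise. If that fails, I would build $\alpha,\beta$ by hand: place $q$ consecutive points on which $\alpha$ and $\beta$ act, in the spirit of the explicit products in Lemma~\ref{lemma3.1}, so that one orbit of $\alpha\beta$ is a single $q$-cycle, and fill the complement with cycles of lengths $k_1$ and $k_2$.

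\textbf{Main obstacle.} The hard step is the explicit construction in Step 2. The support of the $q$-cycle has to be a union of complete $k_1$-cycles of $\alpha$ and complete $k_2$-cycles of $\beta$ up to a compatible overlap, and the divisibility $k_i\mid sd'$ together with the parities must still work out on the complement. This is where I expect the hypothesis $\max(k_1,k_2)>2$ to be used. If $k_1=k_2=2$, then $\alpha$ and $\beta$ are fixed-point-free involutions, so $\alpha\beta$ has dihedral structure: its cycles come in equal pairs, and the available lengths are tightly constrained. With some $k_i\ge3$, I would take one cycle of length $k_i$ straddling the ``seams'' of a chain of cycles of the other permutation. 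Adjusting the length of that chain should make the product's orbit exactly $q$, and I would verify this with a small explicit computation.
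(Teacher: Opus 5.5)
Your Step 1 is correct and neat. If $\theta(e_1),\theta(e_2)\in\Pi$ for some $\Pi\in\Sigma^s_{sd'}$, then $\gamma=\theta(e_1)\theta(e_2)\in\Pi$, and $\Pi$ has only $d'$ blocks. Lemma~\ref{lem:5.2} then forces every cycle of $\gamma$ to have length $pb$ with $p\le d'$ and $b\le s$. So a cycle of prime length $q>\max(s,d')$ rules out even $\theta(e_2)\in\Pi$, not just $\theta(e_2)\prec\Pi$. The conjugation reduction at the start of Step 2 is also fine.

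However, this only moves the whole content of the lemma into one claim: that there exist $\alpha\in A_1$ and $\beta\in A_2$ in $\mathfrak S_{sd'}$ with $\alpha\beta$ containing a $q$-cycle. That claim is not proved.

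\begin{itemize}
\item The realizability route has nothing behind it. The realizability results available here (Lemma~\ref{realizability}, Corollary~\ref{core lemma}) concern uniform data $[k,\ldots,k]$, whereas $C=[q,\ldots]$ is not uniform. For three branch points over $S^2$ the Hurwitz problem has genuine exceptions, so ``handled piecewise'' is not an argument.
\item The hand construction is described only as what ``should'' happen.
\item The construction must also be economical, and nothing in the proposal controls this. The $q$-cycle has to live on a set whose size is divisible by $\operatorname{lcm}(k_1,k_2)$ and is at most $sd'$, while Bertrand only places $q$ in $(s,2s]$. For example, with $k_1=k_2=3$ one gets $d'=3$ and $sd'=3s$, so you may need a construction using at most about $\frac{3}{2}q$ points.
\item Your account of where $\max(k_1,k_2)>2$ enters is heuristic. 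For $k_1=k_2=2$, the product of two fixed-point-free involutions has its cycles in equal pairs, so a $q$-cycle needs $2q\le sd'$. That fails for $d'=2$ but holds for $d'=4$. So in your scheme the hypothesis is really about this room, which you have not quantified.
\end{itemize}

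For comparison, the paper uses exactly the union bound you set aside, with $l=s$, and it closes cleanly. Let $C_1$ be the number of $\Pi$ with $\theta(e_1)\prec\Pi$, and $C_2C_3$ the number of $\theta(e_2)\in A_2$ with $\theta(e_2)\prec\Pi$ for a fixed $\Pi$. Stirling gives $\log C_1=\frac{sd'}{k_1}\log d'+O(\log s)$. It also gives $\log(C_2C_3)-\log|A_2|=-(1-\frac{1}{k_2})\,sd'\log d'+O(\log s)$. The $s\log s$ terms cancel, as you predicted. What remains is $\frac{1}{sd'}\log\frac{C_1C_2C_3}{|A_2|}\to(\frac{1}{k_1}+\frac{1}{k_2}-1)\log d'$, which is negative precisely when one of $k_1,k_2$ exceeds $2$. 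So the exponential-order comparison is not delicate, and it is exactly where the hypothesis is used.

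Your route would give a stronger conclusion if completed. To complete it, you must exhibit an explicit pair $(\alpha,\beta)$ on at most $sd'$ points, supported on a set of size divisible by $\operatorname{lcm}(k_1,k_2)$, and verify directly that $\alpha\beta$ has a cycle of length exactly $q$.
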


\begin{proof}
Firstly we suppose $s\ge2$ is any integer and $l|s$, $l\ge2$. Given $\theta(e_1)\in A_1$, the number of $\Pi\in \Sigma_{sd'}^l$ satisfying $\theta(e_1) \prec \Pi$ is
$$C_1=\frac{(\frac{sd'}{k_1})!}{(l!)^{\frac{sd'}{k_1l}}(\frac{sd'}{k_1l})!}(k_1^{l-1})^{\frac{sd'}{k_1l}}$$
Given any $\Pi$ as above , we want to compute the number of $\theta(e_2)\in A_2$ such that  $\theta(e_2) \prec \Pi$. The number of $l$-blocks in $\Pi$ is $\frac{sd'}{l}$. We divide $\frac{sd'}{l}$ blocks into $\frac{sd'}{lk_2}$ parts, each part has $k_2$ blocks, there are 
$$C_2=\frac{(\frac{sd'}{l})!}{(k_2!)^{\frac{sd'}{lk_2}}(\frac{sd'}{lk_2})!}$$  
ways of such division for $\Pi$. Each division yields 
$$C_3=((k_2-1)!(l!)^{k_2-1})^{\frac{sd'}{lk_2}}$$
different such $\theta(e_2)$. So there are $C_2C_3$ number of $\theta(e_2)$ such that $\theta(e_2)\prec \Pi$. So given $\theta(e_1)$, there are at most $C=C_1C_2C_3$  of $\theta(e_2) \in A_2$ such that there exist some $\Pi$ and $\theta(e_1),\theta(e_2) \prec \Pi$. On the other hand, the total number of $\theta(e_2) \in A_2$ is
$$C'=\frac{(sd')!}{(k_2!)^{\frac{sd'}{k_2}}(\frac{sd'}{k_2})!}(k_2-1)!^{\frac{sd'}{k_2}}$$
Now we compare the sizes of $C$ and $C'$ as $s$ tends to infinity. 
$$\frac{C}{C'}=\frac{  (\frac{sd'}{k_1})!  (k_1^{l-1})^{\frac{sd'}{k_1l}}      (\frac{sd'}{l})!     ((k_2-1)!(l!)^{k_2-1})^{\frac{sd'}{lk_2}}        (k_2!)^{\frac{sd'}{k_2}}(\frac{sd'}{k_2})!           }
{ (l!)^{\frac{sd'}{k_1l}}(\frac{sd'}{k_1l})!      (k_2!)^{\frac{sd'}{lk_2}}(\frac{sd'}{lk_2})!        (sd')!          (k_2-1)!^{\frac{sd'}{k_2}}                           }$$
Let $\kappa(s)=\sqrt[sd']{\frac{C(s)}{C'(s)}}$,  assume $l=s$, then 
$$\kappa(s)= k_1^{\frac{s-1}{k_1 s}} \cdot (s!)^{\frac{1}{s}(1-\frac{1}{k_1}-\frac{1}{k_2})} \cdot (k_2)^{\frac{s-1}{s k_2}}  
\cdot \frac{ \left( \frac{sd'}{k_1} \right)!^{1/sd'} \left( \frac{sd'}{s} \right)!^{1/sd'} \left( \frac{sd'}{k_2} \right)!^{1/sd'} }
{ \left( \frac{sd'}{k_1 s} \right)!^{1/sd'} \left( \frac{sd'}{s k_2} \right)!^{1/sd'} \left( (sd')! \right)^{1/sd'} }$$
Using Stirling approximation: $n!\sim \sqrt{2\pi n}(\frac{n}{e})^n$, we get
$$\lim_{s \to \infty} \kappa(s)=d'^{\frac{1}{k_1}+\frac{1}{k_2}-1}$$
So if one of $k_1,k_2$ is bigger than 2, then $\lim_{s \to \infty} \kappa(s)<1$, that is $C<C'$ for $l=s$ big enough. This means in the above assumption,there exist some $\theta(e_2)\in A_2$ such that for any $\Pi \in \Sigma_{sd'}^s$ satisfying $\theta(e_1) \prec \Pi$, $\theta(e_2) \not \prec \Pi $.
\end{proof}

We also need the following lemmas:

\begin{lemma}\label{gptlemma}
Let $s\geq 2$, and set
\[
\alpha=(1,2)(3,4)\cdots(2s-1,2s),
\]
and
\[
\beta=(2,3)(4,5)\cdots(2s-2,2s-1)(2s,1).
\]
If $\Pi\in\Sigma_{2s}^{s}$ satisfies
\[
\alpha\prec\Pi
\qquad\text{and}\qquad
\beta\prec\Pi,
\]
then
\[
\Pi=\{\{1,3,\ldots,2s-1\},\{2,4,\ldots,2s\}\}.
\]
In particular, $\Pi$ is the unique block decomposition of order $s$
associated to both $\alpha$ and $\beta$.
\end{lemma}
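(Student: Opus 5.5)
Here $\Pi\in\Sigma_{2s}^{s}$ has exactly two blocks $B_1,B_2$, each of size $s$. I would first unpack what $\alpha\prec\Pi$ means in this setting. Since $\alpha$ has cycle type $[2,\dots,2]$, the definition requires each part $D_j$ of the partition of its cycles to carry $p_j=2$. By Lemma~\ref{lem:5.2}, $\hat\alpha$ then consists only of $2$-cycles. With just two blocks, this forces $\hat\alpha$ to be the transposition $(B_1\,B_2)$, so $\alpha(B_1)=B_2$. In particular, no cycle $(2i-1,2i)$ of $\alpha$ can lie inside a single block: each transposition of $\alpha$ has one point in $B_1$ and the other in $B_2$. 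The same reasoning applies to $\beta$, so each transposition $(2i,2i+1)$ (indices mod $2s$, including $(2s,1)$) also separates $B_1$ from $B_2$.

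Next, I would consider the $2s$-cycle graph on $\{1,\dots,2s\}$ whose edges are the transpositions of $\alpha$ and of $\beta$. These edges are $\{1,2\},\{2,3\},\dots,\{2s-1,2s\},\{2s,1\}$, which form a single cycle of even length $2s$. Every edge joins $B_1$ to $B_2$, so membership in $B_1$ or $B_2$ is a proper $2$-colouring of this cycle. Since the cycle is connected, the colouring is determined by the colour of $1$: consecutive integers alternate. Hence $\{B_1,B_2\}=\{\{1,3,\dots,2s-1\},\{2,4,\dots,2s\}\}$.

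For the ``in particular'' statement I would also check existence: the odd/even partition is indeed a block decomposition for both permutations. Both $\alpha$ and $\beta$ swap parity, so each swaps the two blocks, and each $D_j$ consists of $s$ transpositions with $p_j=2$ and $\sum x/p_j=s$. This verifies $\alpha\prec\Pi$ and $\beta\prec\Pi$.

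The only delicate step is the first one: justifying that $\prec$ forces $\hat\alpha$ to swap the two blocks, rather than merely permute them. That is exactly the condition $p_j=k=2$ in the definition of $\prec$, read through Lemma~\ref{lem:5.2}: each orbit of $\hat\alpha$ has length $2$. With only two blocks, the only such permutation is the swap. Once this is established, the rest is the bipartiteness argument above, and it works uniformly for all $s\ge2$.
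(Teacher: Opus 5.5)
Your proof is correct, and its first step is the paper's: with only two blocks, $\alpha\prec\Pi$ and $\beta\prec\Pi$ force $\hat\alpha$ and $\hat\beta$ each to be the swap of the two blocks. After that the two arguments differ.

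The paper argues group-theoretically. Since both swap the blocks, $\alpha\beta$ fixes each block. A direct computation gives $\alpha\beta=(2s-1,2s-3,\ldots,1)(2,4,\ldots,2s)$. Each block is then a union of $\langle\alpha\beta\rangle$-orbits, so the blocks are the two parity classes.

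You argue combinatorially. Every transposition of $\alpha$ and of $\beta$ joins the two blocks, and these edges form the single even cycle $1,2,\ldots,2s,1$. A connected graph has a unique proper $2$-colouring up to exchanging the colours, which forces the parity partition.

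Your route avoids computing the product. It also shows the real reason for uniqueness: $\langle\alpha,\beta\rangle$ is transitive and both generators swap the blocks. The paper's route has the advantage that the same computation of $\alpha\beta$ is reused in Lemma~\ref{gpt2}. There the blocks have size $s$ inside $\alpha\beta$-orbits of size $2s$, and one needs to know how $\alpha\beta$ alternates between blocks.

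You also check explicitly that the parity partition does satisfy $\alpha\prec\Pi$ and $\beta\prec\Pi$. This is needed for the ``in particular'' clause, and the paper leaves it implicit.
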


\begin{proof}
Since $\Pi\in\Sigma_{2s}^{s}$, it consists of exactly two blocks,
each of cardinality $s$. The condition $\alpha\prec\Pi$ means that
the permutation induced by $\alpha$ on the two blocks has cycle type
$(2)$. Hence $\alpha$ exchanges the two blocks. Similarly, since
$\beta\prec\Pi$, the permutation induced by $\beta$ on the two blocks
also has cycle type $(2)$, and therefore $\beta$ exchanges the two
blocks.

It follows that $\alpha\beta$ preserves each block of $\Pi$. On the
other hand, a direct computation gives
\[
\alpha\beta
=
(2s-1,2s-3,2s-5,\ldots,1)
(2,\,4,\,\ldots,2s).
\]
Thus the two orbits of $\alpha\beta$ are
\[
O=\{1,3,\ldots,2s-1\}
\qquad\text{and}\qquad
E=\{2,4,\ldots,2s\},
\]
and both have cardinality $s$.

Since every block of $\Pi$ is invariant under $\alpha\beta$, each
block must be a union of orbits of $\alpha\beta$. As each block has
cardinality $s$, while each of the two orbits $O$ and $E$ has
cardinality $s$, every block must be either $O$ or $E$. Consequently,
\[
\Pi=\{O,E\}
=\{\{1,3,\ldots,2s-1\},\{2,4,\ldots,2s\}\}.
\]
This proves the uniqueness of $\Pi$.
\end{proof}

\begin{lemma}\label{gpt2}
Let $s\ge3$ be a prime number. Define
\[
\begin{aligned}
\alpha&=(1,2)(3,4)\cdots(4s-1,4s),\\
\beta&=(2,3)(4,5)\cdots(4s-2,4s-1)(4s,1),\\
\gamma&=(1,3)(2,4)(5,6)(7,8)\cdots(4s-1,4s).
\end{aligned}
\]
Then there is no $\Pi\in\Sigma^s_{4s}$ such that
\[
\alpha\prec\Pi,\qquad
\beta\prec\Pi,\qquad
\gamma\prec\Pi.
\]
\end{lemma}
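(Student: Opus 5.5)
Suppose, for contradiction, that some $\Pi\in\Sigma^s_{4s}$ satisfies $\alpha\prec\Pi$, $\beta\prec\Pi$ and $\gamma\prec\Pi$. Then $\Pi$ has exactly four blocks, each of size $s$. Since $\alpha,\beta,\gamma$ are fixed-point-free involutions, the condition $\prec$ with $p_j=k=2$ says that each of them induces a fixed-point-free involution $\hat\alpha,\hat\beta,\hat\gamma$ on the four blocks. As in Lemma~\ref{gptlemma}, the plan is to use products of these involutions. These products induce permutations of the blocks, and their orbit structure on $\{1,\dots,4s\}$ is rigid enough to pin down $\Pi$ and then contradict the condition for $\gamma$.

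\emph{Step 1: the product $\alpha\beta$.} A direct computation, as in Lemma~\ref{gptlemma}, gives that $\alpha\beta$ is a product of two $2s$-cycles. One cycle is on the odd numbers $O=\{1,3,\dots,4s-1\}$, the other on the even numbers $E=\{2,4,\dots,4s\}$. The product $\hat\alpha\hat\beta$ of two fixed-point-free involutions on four points is either the identity or a product of two disjoint transpositions. Hence $(\alpha\beta)^2$ preserves every block. Now $(\alpha\beta)^2$ has four orbits of size $s$: the odd numbers split into the classes $1,5,9,\dots$ and $3,7,\dots$, and the even numbers split the same way. Each block has size $s$, so each block is a union of these orbits, and therefore each block equals one of them. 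Write the orbits as
\[
O_1=\{1,5,\dots,4s-3\},\quad O_3=\{3,7,\dots,4s-1\},\quad E_2=\{2,6,\dots,4s-2\},\quad E_0=\{4,8,\dots,4s\}.
\]
Then $\Pi=\{O_1,O_3,E_2,E_0\}$ is forced, and it remains to check whether this $\Pi$ is compatible with $\alpha$ and $\beta$. Note that $\alpha$ maps $1\mapsto2$ and $5\mapsto 6$, so $O_1\to E_2$ and $O_3\to E_0$. Also $\beta$ maps $2\mapsto3$ and $6\mapsto7$, so $E_2\to O_3$ and $E_0\to O_1$. So $\alpha$ and $\beta$ are genuinely compatible, and the forced $\Pi$ survives Step 1.

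\emph{Step 2: $\gamma$ breaks the forced decomposition.} The permutation $\gamma$ sends $1\mapsto3$, so it must map $O_1$ to $O_3$. But $\gamma$ also sends $5\mapsto6\in E_2$. This requires $s\ge2$, so that $5\in O_1$, and that holds since $s\ge3$. Thus $\gamma$ maps elements of $O_1$ into two different blocks, so $\gamma$ does not induce a permutation of $\Pi$. This contradicts $\gamma\prec\Pi$, and the lemma follows.

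The main obstacle is the orbit computation in Step 1. One must verify that $\alpha\beta$ really consists of two $2s$-cycles, on $O$ and on $E$, so that its square has exactly the four residue-class orbits mod $4$. One must also verify that $(\alpha\beta)^2$ is forced to fix every block. This holds because a product of two fixed-point-free involutions of a $4$-element set lies in the Klein four-group, and every element of the Klein four-group squares to the identity. Primality of $s$ does not appear to be needed here; only $s\ge2$ is used. I will note this but keep the stated hypothesis.
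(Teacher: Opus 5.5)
Your proof is correct and follows the same strategy as the paper. The two $2s$-cycles of $\alpha\beta$, on the odd and on the even numbers, force $\Pi$ to be the four residue classes mod $4$, and then $\gamma(1)=3$, $\gamma(5)=6$ give the contradiction. The difference is in the middle step. You note that $\widehat{\alpha\beta}=\widehat{\alpha}\widehat{\beta}$ lies in the Klein four-group, so $(\alpha\beta)^2$ fixes every block. This removes the paper's separate argument ruling out $\widehat{\beta}=\widehat{\alpha}$, as well as its relabeling and alternation argument. Your remark that only $s\ge 2$ is used, not primality, applies equally to the paper's proof.
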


\begin{proof}
Suppose, for contradiction, that such a block
decomposition exists. Write
\[
\Pi=\{B_1,B_2,B_3,B_4\},\qquad |B_i|=s.
\]
Since $\alpha\prec\Pi$, its induced permutation on
the four blocks has cycle type $(2,2)$.
After relabeling the blocks, we may assume that
$1\in B_1$ and
\[
\widehat{\alpha}=(B_1,B_2)(B_3,B_4).
\]
Similarly, $\widehat{\beta}$ is one of
\[
(B_1,B_2)(B_3,B_4),\quad
(B_1,B_3)(B_2,B_4),\quad
(B_1,B_4)(B_2,B_3).
\]

We first show that $\widehat{\beta}\ne\widehat{\alpha}$.
Otherwise, $\widehat{\alpha\beta}=1$, so $\alpha\beta$
preserves every block. However,
\[
\alpha\beta
=(1,4s-1,4s-3,\ldots,3)(2,4,6,\ldots,4s).
\]
Each orbit has length $2s$, whereas each invariant
block has only $s$ elements, a contradiction.

After possibly interchanging $B_3$ and $B_4$,
we may therefore assume that
\[
\widehat{\beta}=(B_2,B_3)(B_4,B_1),
\qquad
\widehat{\alpha\beta}=(B_1,B_3)(B_2,B_4).
\]
Let
\[
O=\{1,3,5,\ldots,4s-1\},\qquad
E=\{2,4,6,\ldots,4s\}.
\]
These are the two orbits of $\alpha\beta$, each
of cardinality $2s$.

Since $1\in B_1$ and $\widehat{\alpha\beta}$
interchanges $B_1$ and $B_3$, we have
$O=B_1\cup B_3$. Moreover, $\alpha\beta$ alternates
between these blocks along its cycle. Hence
\[
B_1=\{1,5,9,\ldots,4s-3\},\qquad
B_3=\{3,7,11,\ldots,4s-1\}.
\]
Likewise, $E=B_2\cup B_4$, and, up to interchanging
$B_2$ and $B_4$,
\[
B_2=\{2,6,10,\ldots,4s-2\},\qquad
B_4=\{4,8,12,\ldots,4s\}.
\]
Thus, up to relabeling the blocks,
\[
\Pi=
\{\{1,5,\ldots,4s-3\},
\{2,6,\ldots,4s-2\},
\{3,7,\ldots,4s-1\},
\{4,8,\ldots,4s\}\}.
\]

Finally, $1,5\in B_1$, whereas
\[
\gamma(1)=3\in B_3,\qquad
\gamma(5)=6\in B_2.
\]
Hence $\gamma(B_1)$ contains elements from two
distinct blocks and cannot itself be a block of
$\Pi$, contradicting $\gamma\prec\Pi$.
\end{proof}

Now we can prove the following result:

\begin{theorem}\label{theorem1}
Given natural numbers $k_1,...,k_{p+1}\ge2$ with $p \ge 1$, suppose $D$ is a planar diagram with $p+1$ vertices such that $\chi (S_D)\le 0$. Then  for every sufficiently large prime number $s$, there exist minimal tilings with $sd'$ tiles by congruent most curvilinear polygons associated to the planar diagram $D$ with vertices $V_1^{k_1},...,V_{p+1}^{k_{p+1}}$ on some surface.

\end{theorem}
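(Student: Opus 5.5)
We construct the tiling through Lemma~\ref{lem:branched-cover} from a monodromy representation $\theta:\pi_1(S_D\setminus\{V_1,\dots,V_{p+1}\})\to\mathfrak S_{sd'}$. Since $s$ is prime, the only admissible order in Theorem~\ref{minimal criterion} is $l=s$. It therefore suffices to exhibit transitive $\theta$ with $\theta(e_i)\in A_i=[k_i,\dots,k_i]$, satisfying the surface relation of Lemma~\ref{lemma2.1}, such that no $\Pi\in\Sigma^s_{sd'}$ has $\theta(e_i)\prec\Pi$ for all $i$.

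\emph{Case 1: some $k_i>2$.} Relabel so that $k_1>2$ or $k_2>2$. Fix any $\theta(e_1)\in A_1$. By Lemma~\ref{lemma_1}, for $s$ large there is $\theta(e_2)\in A_2$ such that no $\Pi\in\Sigma^s_{sd'}$ has both $\theta(e_1)\prec\Pi$ and $\theta(e_2)\prec\Pi$. Choose the remaining $\theta(e_i)\in A_i$ arbitrarily. The product $\theta(e_1)\cdots\theta(e_{p+1})$ is even, because the total branching $s\sum_i(d'-d'/k_i)$ is even.

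Next we complete the surface relation. Since $\chi(S_D)\le0$, the surface $S_D$ has genus $g\ge1$, orientable or not. By Corollary~\ref{cor3.2}, the inverse of the product of the $\theta(e_i)$ can be written as a commutator $\beta_1\gamma_1\beta_1^{-1}\gamma_1^{-1}$ with $\beta_1$ an $sd'$-cycle, or as $\beta_1^2\beta_2^2$ with $\beta_1\beta_2$ an $sd'$-cycle. In the nonorientable case with $g=1$, the relation must take the form $\beta_1^2$; we take $g\ge2$ here and treat $g=1$ by a separate adjustment. All remaining handle generators are set to the identity, or, when $g$ is odd in the nonorientable case, we pair them appropriately.

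With these choices the image contains an $sd'$-cycle, so it is transitive, and Lemma~\ref{lemma2.1} yields a branched covering. Any $\Pi$ with $\theta(e_i)\prec\Pi$ for all $i$ would contradict the choice of $\theta(e_2)$. Theorem~\ref{minimal criterion} then gives minimality, and Lemma~\ref{lem:branched-cover} gives the tiling.

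\emph{Case 2: $k_i=2$ for all $i$.} Then $d'\in\{2,4\}$, according to the parity of $p+1$. If $d'=2$, take $\theta(e_1)=\alpha$ and $\theta(e_2)=\beta$ as in Lemma~\ref{gptlemma} on $2s$ points. The only candidate is then $\Pi=\{O,E\}$. We choose a third involution $\theta(e_3)=\alpha$ or another fixed-point-free involution not preserving $\{O,E\}$, which is possible when $p\ge2$. When $p=1$, the handle generators must destroy $\Pi$ instead. This forces us to check that the Corollary~\ref{cor3.2} factorization can be chosen with $\beta_1$ not preserving $\{O,E\}$. We achieve this by conjugating the factorization by an element of the centralizer of $\alpha\beta$, or more simply by noting that the block condition concerns only the $\theta(e_i)$ together with the handles, and choosing the $sd'$-cycle $\beta_1$ to map $1$ and $3$ into different blocks. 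If $d'=4$, then $p+1$ is odd, so $p\ge2$. We use $\alpha,\beta,\gamma$ from Lemma~\ref{gpt2}, with further vertices assigned involutions so that the parity works. Since the vertex condition alone already excludes every $\Pi$, the handles can be arbitrary.

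The main obstacle is making the handle generators compatible with the parity constraint and transitivity while not accidentally preserving a block system. In Case~1 this is automatic, since the obstruction comes from the vertex permutations alone. The delicate points are the all-$2$ case with few vertices and the nonorientable genus-one case, where the relation is $\prod\theta(e_i)\,\beta^2=1$. For the latter we would invoke Lemma~\ref{lemma3.1} to write the required even permutation as a square of a suitable permutation generating a transitive group together with the $\theta(e_i)$.
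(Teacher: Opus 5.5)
Your reduction to $l=s$, your argument when some $k_i>2$, and your use of Lemmas~\ref{gptlemma} and~\ref{gpt2} follow the paper's proof.

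\textbf{The gap: the case $p=1$, $k_1=k_2=2$.} Here $\alpha,\beta$ leave exactly one candidate $\Pi=\{O,E\}$. So some handle generator must fail to preserve it, and you never produce one.

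Your first mechanism cannot work. Any $c$ commuting with $\alpha\beta$ permutes the orbits $O,E$ of $\alpha\beta$. Hence $c\beta_1c^{-1}$ preserves $\{O,E\}$ exactly when $\beta_1$ does.

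Your fallback, choosing the cycle $\beta_1$ so that $1$ and $3$ land in different blocks, is only asserted, and it is the whole content of this case. The choice is not automatic:
\begin{itemize}
\item Suppose the $2s$-cycle $\sigma$ in the factorization $(\alpha\beta)^{-1}=\sigma\tau$ behind Corollary~\ref{cor3.2} alternates between $O$ and $E$.
\item Then $\tau$ preserves $\{O,E\}$, and being a $2s$-cycle it also alternates.
\item Any permutation conjugating $\sigma^{\pm1}$ to $\tau$ maps the alternating classes of one cycle onto those of the other, so it too preserves $\{O,E\}$.
\item Every generator then preserves $\Pi$, and the tiling descends.
\end{itemize}
Such bad factorizations exist for every $s$, e.g.\ $\sigma^{-1}=(1,2,\ldots,2s)$.

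The paper supplies the missing choice explicitly. The cycle $u=(2s-1,2,2s-3,2s-5,\ldots,1,4,6,\ldots,2s)$ satisfies $\alpha\beta=[u^{-1},v]$ and $\alpha\beta=(u^{-1}w)^2(w^{-1})^2$ for suitable $v,w$. Taking $a_1=u$, $b_1=u^{-1}v$ (orientable), or $a_1=w$, $a_2=w^{-1}u$ (nonorientable), puts $u$ in the image in both cases. Since $3,2s-1\in O$ while $u(2s-1)=2\in E$ and $u(3)=1\in O$, $u$ does not preserve $\Pi$.

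\textbf{Two smaller points.}
\begin{itemize}
\item In the case $d'=2$, $p\ge2$, the option $\theta(e_3)=\alpha$ fails, because $\alpha\prec\{O,E\}$. You need an involution like the paper's $(1,3)(2,4)(5,6)\cdots(2s-1,2s)$, which sends $1,5\in O$ to $3\in O$ and $6\in E$.
\item The nonorientable genus-one case you single out does not occur. Since $\chi(g\mathbb P^2)=2-g\le0$ forces $g\ge2$, one uses $a_1^2a_2^2$ from Corollary~\ref{cor3.2} and sets $a_j=1$ for $j\ge3$, as the paper does; no pairing is needed. Your proposed fallback there would not have worked anyway: Lemma~\ref{lemma3.1} produces products of two long cycles, not squares, and not every even permutation is a square.
\end{itemize}
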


\begin{proof}

Let $s\geq 3$ be a sufficiently large prime.
Suppose first that at least one of the $k_i$ is greater
than $2$. After relabeling, assume that $k_2>2$.
Fix any $\theta(e_1)\in A_1$.
By Lemma \ref{lemma_1}, there exist
$\theta(e_2)\in A_2$ such that no
$\Pi\in\Sigma^s_{sd'}$ satisfies both
\[
\theta(e_1)\prec\Pi
\quad\text{and}\quad
\theta(e_2)\prec\Pi.
\]

If $S_D=g\bb T^2$, for $3\le i\le p+1$, take any $\theta(e_i)\in A_i$. The definition of $d'$ easily implies $\theta(e_1)\theta(e_2)...\theta(e_{p+1})$ is an element in the alternating group on $sd'$ symbols. By Corollary \ref{cor3.2}, we can find $a_1,b_1 \in \mathfrak{S}_{sd'}$ such that $\theta(e_1)\theta(e_2)\theta(e_3)...\theta(e_{p+1})[a_1,b_1]=1$ with $a_1$ a $sd'$-cycle. Let $a_j=b_j=1$ for $2\le j\le g$. Then we have $\theta(e_1)\theta(e_2)\theta(e_3)...\theta(e_{p+1})[a_1,b_1]...[a_g,b_g]=1$. By Lemmas \ref{lemma2.1} and \ref{lem:branched-cover} , these permutations yield a tiling associated to $D$ with $sd'$ tiles. By Theorem \ref{minimal criterion}, this tiling is minimal by our construction of $\theta(e_2)$.

If $S_D=g\bb P^2$, for $3\le i\le p+1$, take any $\theta(e_i)\in A_i$. By Corollary \ref{cor3.2}, we can find $a_1,a_2 \in \mathfrak{S}_{sd'}$ such that $\theta(e_1)\theta(e_2)\theta(e_3)...\theta(e_{p+1})a_1^2a_2^2=1$ with $a_1a_2$ a $sd'$-cycle. Let $a_j=1$ for $3\le j\le g$. Then we have $\theta(e_1)\theta(e_2)\theta(e_3)...\theta(e_{p+1})a_1^2a_2^2...a_g^2=1$. By Lemmas \ref{lemma2.1} and \ref{lem:branched-cover} , these permutations yield a tiling associated to $D$ with $sd'$ tiles. By Theorem \ref{minimal criterion}, this tiling is minimal by our construction of $\theta(e_2)$.

It remains to consider the case where all  $k_i$ are equal to $2$. If $p=1$, then $d'=2$, take $\theta(e_1)=(1,2)(3,4)...(2s-1,2s)$, $\theta(e_2)=(2,3)(4,5)...(2s-2,2s-1)(2s,1)$. By Lemma \ref{gptlemma}, the only common block decomposition $\Pi$ of order $l=s$ associated to $\theta(e_1),\theta(e_2)$ is $\Pi=\{\{1,3,5,...,2s-1\},\{2,4,6,...,2s\}\}$. Now $\theta(e_1)\theta(e_2)=(2s-1,2s-3,...,1)(2,4,...,2s)$. If $S_D=g\bb T^2$, by Lemma \ref{lemma3.1} and Corollary \ref{cor3.2}, we can find $u=(2s-1,2,2s-3,2s-5,...,1,4,6,...,2s)$ and $v\in \mathfrak{S}_{2s}$ such that $\theta(e_1)\theta(e_2)=[u^{-1},v]$. Taking $a_1=u$, $b_1=u^{-1}v$, $a_j=b_j=1$ for $2\le j\le g$, then $\theta(e_1)\theta(e_2)[a_1, b_1]...[a_g, b_g]=1$, transitivity follows from $a_1=u$ is a $2s$-cycle. If $S_D=g\bb P^2$, we can find $u=(2s-1,2,2s-3,2s-5,...,1,4,6,...,2s)$ and $w \in \mathfrak{S}_{2s}$ such that $\theta(e_1)\theta(e_2)=(u^{-1}w)^2(w^{-1})^2$. Taking $a_1=w$, $a_2=w^{-1}u$, $a_j=1$ for $3\le j\le g$, then $\theta(e_1)\theta(e_2)a_1^2a_2^2...a_g^2=1$, transitivity follows from $a_1a_2=u$ is a $2s$-cycle.  This gives a tiling associated to $D$ with $2s$ tiles. In both cases, the generated permutation group contains $u$.
Since $u(2s-1)=2$ and $u(3)=1$, the permutation $u$
does not preserve $\Pi$. As $s$ is prime, the minimal
tiling criterion implies that the constructed tiling
is minimal.

If  all $k_i$ are equal to 2 and $p\ge2$, then $d'=2$ or $4$. If $d'=2$, take $\theta(e_1)=(1,2)(3,4)...(2s-1,2s)$, $\theta(e_2)=(2,3)(4,5)...(2s-2,2s-1)(2s,1)$. Then the only common block decomposition $\Pi$ of order $l=s$ associated to $\theta(e_1),\theta(e_2)$ is $\Pi=\{\{1,3,5,...,2s-1\},\{2,4,6,...,2s\}\}$. Take $\theta(e_3)=(1,3)(2,4)(5,6)(7,8)...(2s-1,2s)$, then $\Pi$ is not a block decomposition of order $l=s$ associated to $\theta(e_3)$. A similar argument shows this can give a minimal tiling associated to $D$ with $2s$ tiles. In the case $d'=4$, take $\theta(e_1)=(1,2)(3,4)...(4s-1,4s)$, $\theta(e_2)=(2,3)(4,5)...(4s-2,4s-1)(4s,1)$, $\theta(e_3)=(1,3)(2,4)(5,6)(7,8)\cdots(4s-1,4s)$, using Lemma \ref{gpt2} we get the result.

\end{proof}

Now we assume $p=0$, so the planar diagram $D$ has only one vertex. We assume $\chi (S_D)\le 0$. Given $k\ge2$ is a natural number, so $d'=k$ if $k$ is odd and $d'=2k$ if $k$ is even. For any positive integer $s$, we now investigate whether there exist minimal tilings associated to $D$ with $sd'$ tiles and vertices $V^{k}$. Again we first construct a map $\theta:\pi_1(S_D-\{pts\})\to \mathfrak{S}_{sd'}$, then argue it is minimal by Theorem \ref{minimal criterion}. 

Firstly, we assume $k\ge3$ is odd. Without loss of generality, we may assume $\theta(e_1)=(1,...,k)(k+1,...,2k)...((s-1)k+1,...,sk)$. By Lemma \ref{lemma3.1}, we can write $\theta(e_1)$ to be the product of two $sk$ cycles $\theta(e_1)=\beta^{-1}\gamma$ with $\beta=(1,k+1,2k+1,...,(s-1)k+1,2,3,...,k,k+2,...,2k,...,(s-1)k+2,...,sk)$. We will use $(\beta)_i$ to represent the $i$-th entry in the displayed cycle notation for $\beta$, e.g., $(\beta)_2=k+1$. We will argue that for any $s\ge2$ we can't have a common block decomposition $\Pi$ of order $l$ for $\theta(e_1)$ and $\beta$ such that $\theta(e_1)\prec \Pi$, where $l|s$ and $l\ge2$. 

Otherwise, if there is some above common decomposition $\Pi$ of order $l$, since \(\beta\) is a \(sk\)-cycle, elements whose positions in \(\beta\) differ by a multiple of \(sk/l\) lie in the same block, that is
 for $1\le i \le \frac{sk}{l}$, $(\beta)_i$ and $(\beta)_{i+j\cdot\frac{sk}{l}}$ will be in the same block, $0\le j \le l-1$. So $\Pi$ is totally determined by $\beta$. Next we will show that there exist elements $m,n$, they are both in the same block of $\Pi$ and in the same cycle of $\theta(e_1)$, contradicting the fact that $\theta(e_1)\prec \Pi$.

Because $k\ge3$ and $l\ge2$, we may choose
$j_0=\lfloor l/2\rfloor$ such that
\[
\frac{1}{k}\le\frac{j_0}{l}\le\frac{k-1}{k},
\]
that is,
\[
s\le j_0\frac{sk}{l}\le(k-1)s.
\]
Thus there is an integer $t$, $1\le t\le s$, such that
\[
s+(t-1)(k-2)\le j_0\frac{sk}{l}\le s+t(k-2).
\]
Take $m=(\beta)_t$ and $n=(\beta)_{t+j_0sk/l}$.
Their positions in $\beta$ differ by a multiple of $sk/l$,
so $m,n$ lie in the same block of $\Pi$.
Moreover, $m=(t-1)k+1$, and the above inequalities imply
\[
n\in\{(t-1)k+2,\ldots,tk\}.
\]
Hence $m,n$ are distinct elements of the same cycle of
$\theta(e_1)$, contradicting $\theta(e_1)\prec\Pi$.

Now suppose that \(k\ge4\) is even. In this case \(d'=2k\). Choose
\[
\theta(e_1)
=(1,\ldots,k)(k+1,\ldots,2k)\cdots((2s-1)k+1,\ldots,2sk),
\]
and let
\[
\beta=\bigl(
1,k+1,\ldots,(2s-1)k+1,\,
2,\ldots,k,\,
k+2,\ldots,2k,\ldots,\,
(2s-1)k+2,\ldots,2sk
\bigr).
\]
By Lemma \ref{lemma3.1},
\(\theta(e_1)=\beta^{-1}\gamma\) is a product of two \(2sk\)-cycles.

Suppose that there is a common block decomposition \(\Pi\) of order \(l\) for \(\theta(e_1)\) and \(\beta\), where \(l\mid s\), \(l\ge2\), and \(\theta(e_1)\prec\Pi\). Since \(\beta\) is a \(2sk\)-cycle, elements whose positions in \(\beta\) differ by a multiple of \(2sk/l\) lie in the same block.

Choose \(j_0=\lfloor l/2\rfloor\). Then
\[
\frac1k\le\frac{j_0}{l}\le\frac{k-1}{k},
\qquad
2s\le j_0\frac{2sk}{l}\le2s(k-1).
\]
Thus there is an integer \(t\), \(1\le t\le2s\), such that
\[
2s+(t-1)(k-2)
\le j_0\frac{2sk}{l}
\le2s+t(k-2).
\]
Set
\[
m=(\beta)_t,\qquad
n=(\beta)_{\,t+j_0\,2sk/l}.
\]
The two elements lie in the same block of \(\Pi\). Moreover,
\(m=(t-1)k+1\), while the displayed inequalities imply that
\[
n\in\{(t-1)k+2,\ldots,tk\}.
\]
Hence \(m,n\) are distinct elements of the same cycle of \(\theta(e_1)\), contradicting \(\theta(e_1)\prec\Pi\). Therefore no such common block decomposition exists. In both the odd and even cases,
$\theta(e_1)=\beta^{-1}\gamma$, where
$\beta$ and $\gamma$ are $sd'$-cycles.
If $S_D$ is orientable, choose $v$ such that
$\gamma=v\beta v^{-1}$.
Taking
\[
\theta(a_1)=\beta,\qquad
\theta(b_1)=\beta^{-1}v
\]
and sending the remaining generators to the identity gives
\[
\theta(e_1)[\theta(a_1),\theta(b_1)]=1.
\]
If $S_D$ is nonorientable, choose $w$ such that
$\gamma=w\beta^{-1}w^{-1}$.
Taking
\[
\theta(a_1)=w,\qquad
\theta(a_2)=w^{-1}\beta
\]
and sending the remaining generators to the identity gives
\[
\theta(e_1)\theta(a_1)^2\theta(a_2)^2=1.
\]
In both cases, the generated group contains $\beta$ and
is therefore transitive. Lemmas \ref{lemma2.1} and \ref{lem:branched-cover} give the required
tiling, which is minimal by Theorem \ref{minimal criterion}.

We have proved the following result:

\begin{theorem} \label{onevertex}
Let $k\ge3$ be an integer, and suppose $D$ is a planar diagram with one vertex such that $\chi (S_D)\le 0$. Then for any integer $s\ge 1$, there exist minimal tilings with $sd'$ tiles by congruent most curvilinear polygons associated to $D$ with vertices $V^{k}$ on some surface.
\end{theorem}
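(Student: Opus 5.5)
The plan is to realize the tiling through a monodromy representation $\theta:\pi_1(S_D\setminus\{V\})\to\mathfrak S_{sd'}$ and then certify minimality with Theorem~\ref{minimal criterion}. The fundamental group is generated by $e_1$ together with $a_1,b_1,\dots,a_g,b_g$ in the orientable case, or $a_1,\dots,a_g$ in the nonorientable case.

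\textbf{Choice of $\theta(e_1)$ and $\beta$.} Write $N=sd'$, so $N=sk$ for odd $k$ and $N=2sk$ for even $k$. Take $\theta(e_1)$ to be the product of the consecutive $k$-cycles $(1,\dots,k)(k+1,\dots,2k)\cdots$. Let $\beta$ be the $N$-cycle that first runs through the first entries $1,k+1,2k+1,\dots$ of these cycles and then through the remaining entries of each cycle in order. By Lemma~\ref{lemma3.1}, with $r=N/k$ and $t$ chosen so that $r+2t=N$, we can write $\theta(e_1)=\beta^{-1}\gamma$ with $\gamma$ an $N$-cycle. This uses that $N-N/k$ is even, which is exactly the defining property of $d'$.

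\textbf{Realizing the surface relation.} Since $\gamma$ and $\beta$ are conjugate $N$-cycles, pick $v$ with $\gamma=v\beta v^{-1}$. In the orientable case set $\theta(a_1)=\beta$ and $\theta(b_1)=\beta^{-1}v$, and send all other generators to the identity. Then $\theta(e_1)[\theta(a_1),\theta(b_1)]=1$. In the nonorientable case pick $w$ with $\gamma=w\beta^{-1}w^{-1}$ and set $\theta(a_1)=w$, $\theta(a_2)=w^{-1}\beta$; this gives $\theta(e_1)\theta(a_1)^2\theta(a_2)^2=1$. Here $\chi(S_D)\le0$ guarantees $g\ge1$ or $g\ge2$ respectively, so these generators exist. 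In both cases the image contains the $N$-cycle $\beta$, so it is transitive. Lemmas~\ref{lemma2.1} and~\ref{lem:branched-cover} then produce a tiling with $N$ tiles and vertex type $V^k$.

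\textbf{Minimality (the main step).} By Theorem~\ref{minimal criterion} it suffices to rule out a block decomposition $\Pi$ of order $l\ge2$, $l\mid s$, that is respected by $\beta$ and satisfies $\theta(e_1)\prec\Pi$. For $s=1$ nothing needs to be shown.

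1. Since $\beta$ is a single $N$-cycle permuting the $N/l$ blocks, its induced permutation on blocks is an $N/l$-cycle. Hence positions in the cycle notation of $\beta$ that differ by multiples of $N/l$ lie in the same block, and $\Pi$ is determined by $\beta$.

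2. Pick $j_0=\lfloor l/2\rfloor$. Because $k\ge3$, the shift $j_0N/l$ lies between $N/k$ and $N(k-1)/k$, the range of positions covering the tails of the $k$-cycles.

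3. A counting argument then gives $t$ such that $(\beta)_t=(t-1)k+1$ and $(\beta)_{t+j_0N/l}$ falls in the tail $\{(t-1)k+2,\dots,tk\}$ of the same $k$-cycle.

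4. These two distinct elements lie in the same block but in the same cycle of $\theta(e_1)$. This contradicts $\theta(e_1)\prec\Pi$, which forces each block to meet each $k$-cycle in at most one point.

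The delicate part is the inequality bookkeeping in step 3, which should be done uniformly for odd $k$ and for even $k\ge4$, where $N=2sk$. The hypothesis $k\ge3$ is essential: it makes the interval in step 2 nonempty for every $l\ge2$.
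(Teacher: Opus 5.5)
Your proposal is correct and follows essentially the same route as the paper. You use the same $\theta(e_1)$ and $N$-cycle $\beta$ from Lemma~\ref{lemma3.1}, the same realizations $\theta(a_1)=\beta,\ \theta(b_1)=\beta^{-1}v$ (orientable) or $\theta(a_1)=w,\ \theta(a_2)=w^{-1}\beta$ (nonorientable), and the same choice $j_0=\lfloor l/2\rfloor$ to force $(t-1)k+1$ and an element of $\{(t-1)k+2,\dots,tk\}$ into one block. The step-3 bookkeeping you flag is exactly the paper's observation that the intervals $[N/k+(t-1)(k-2),\,N/k+t(k-2)]$, for $t=1,\dots,N/k$, cover $[N/k,\,N(k-1)/k]$, and this works for both parities of $k$.
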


The preceding argument does not apply when $k=2$. For example,
take $s=6$ and $l=3$. Then
\[
\theta(e_1)=(1,2)(3,4)\cdots(23,24),
\qquad
\beta=(1,3,5,\ldots,23,2,4,6,\ldots,24).
\]
The following partition is a block decomposition of order $3$:
\[
\begin{aligned}
\Pi={}&\{\{1,17,10\},\{3,19,12\},\{5,21,14\},\{7,23,16\},\\
    &{}\{9,2,18\},\{11,4,20\},\{13,6,22\},\{15,8,24\}\}.
\end{aligned}
\]
It satisfies $\theta(e_1)\prec\Pi$ and $\beta\in\Pi$.
Nevertheless, we have the following result:

\begin{theorem}\label{22}
For $k=2$, suppose $D$ is a planar diagram with one vertex such that $\chi (S_D)\le 0$. Then for any integer $s=2^m$ with an integer $m\ge0$, there exist minimal tilings by congruent most curvilinear polygons associated to $D$ with vertices $V^{k}$ and $sd'$ tiles on some surface.
\end{theorem}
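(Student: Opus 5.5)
The plan is to copy the construction used for Theorem~\ref{onevertex}, with one change. We choose the factorization of $\theta(e_1)$ so that a single transposition of $\theta(e_1)$ sits at positions $2s$ apart in the $4s$-cycle $\beta$. This defeats every possible block order $l$ at once, because $l$ is a power of $2$.

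For $k=2$ we have $d'=2k=4$, so the tiling has $4s=2^{m+2}$ tiles. If $s=1$, Theorem~\ref{minimal criterion} gives minimality for free, since no $l>1$ divides $s$; existence then follows from Theorem~\ref{case12}. So assume $s=2^m\ge2$. Put
\[
\theta(e_1)=(1,2)(3,4)\cdots(4s-1,4s).
\]
Apply Lemma~\ref{lemma3.1} with $r=2s$ and $t=s$, taking $a_i=2i$ and $b_1<\cdots<b_{2s}$ to be $2,4,\ldots,4s$. This writes $\theta(e_1)=\beta^{-1}\gamma$, where $\gamma$ is a $4s$-cycle and
\[
\beta=(1,3,5,\ldots,4s-1,2,4,\ldots,4s)
\]
is also a $4s$-cycle. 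These are exactly the cycles in the $s=6$ example above.

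Next we realize the monodromy exactly as at the end of the proof of Theorem~\ref{onevertex}.
\begin{itemize}
\item If $S_D$ is orientable, write $\gamma=v\beta v^{-1}$ and set $\theta(a_1)=\beta$ and $\theta(b_1)=\beta^{-1}v$.
\item If $S_D$ is nonorientable, write $\gamma=w\beta^{-1}w^{-1}$ and set $\theta(a_1)=w$ and $\theta(a_2)=w^{-1}\beta$.
\end{itemize}
All other generators go to $1$. The product relation holds, and the image contains the full cycle $\beta$, so it is transitive. Lemmas~\ref{lemma2.1} and~\ref{lem:branched-cover} then give a tiling with $4s$ tiles and vertex type $V^2$.

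It remains to prove minimality via Theorem~\ref{minimal criterion}. Suppose $\Pi$ is a common block decomposition of order $l$ with $l\mid s$, $l\ge2$, and $\theta(e_1)\prec\Pi$. Since $\beta$ is a $4s$-cycle preserving $\Pi$, it permutes the $4s/l$ blocks in a single cycle. Hence two entries of $\beta$ lie in the same block whenever their positions differ by a multiple of $4s/l$.

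Because $s$ is a power of $2$, so is $l$, and $l\ge2$; therefore $4s/l$ divides $2s$. In $\beta$ the entry $1$ is at position $1$ and the entry $2$ is at position $2s+1$. Their positions differ by $2s$, so $1$ and $2$ lie in the same block of $\Pi$.

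But $\theta(e_1)\prec\Pi$ means that each block meets each cycle of $\theta(e_1)$ in at most one element. This is the fact used in Theorem~\ref{onevertex}: each $D_{ij}$ consists of $l$ distinct cycles, one point from each in every block. Since $(1,2)$ is a cycle of $\theta(e_1)$, this is a contradiction. So no such $\Pi$ exists, and the tiling is minimal.

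The main point to check carefully is the claim that $\beta$ forces blocks to be residue classes of positions modulo $4s/l$. This holds because the induced block permutation $\hat\beta$ is a single cycle of length $4s/l$, and each block meets the cycle $\beta$ in $l$ equally spaced positions. The power-of-two hypothesis enters only through the divisibility $4s/l\mid 2s$. That divisibility is exactly what fails in the example $s=6$, $l=3$ above.
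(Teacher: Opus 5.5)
Your proposal is correct and follows the paper's proof essentially step for step. You use the same $\theta(e_1)$ and $\beta$, the same observation that $4s/l\mid 2s$ forces $1$ and $2$ (at positions $1$ and $2s+1$ of $\beta$) into a common block, and the same realization borrowed from Theorem~\ref{onevertex}. You also make explicit two points the paper leaves implicit: the parameters in Lemma~\ref{lemma3.1}, and why $\theta(e_1)\prec\Pi$ forbids a block from containing both points of a transposition.
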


\begin{proof}
In this case $d'=4$. The case $m=0$ follows from the existence and minimality
of a tiling with $d'$ tiles. Henceforth assume $m\ge1$. Again we may choose $\theta(e_1)=(1,2)(3,4)...(4s-1,4s)$, $\beta=(1,3,5,...,4s-1,2,4,6,...,4s)$. For any $l=2^n$, where $n\in \mathbb Z$ and $1\le n\le m$, we prove there is no $\Pi\in \Sigma_{4s}^l$ such that $\theta(e_1)\prec \Pi$ and $\beta\in \Pi$. Otherwise, the $i$-th entry and $(i+\frac{4s}{l})$-th entry will be in the same block of $\Pi$. Because $\frac{4s}{l}|2s$ in this case, $1$ and $2$ lie in the same block of $\Pi$. But $1$ and $2$ are in the same cycle of $\theta(e_1)$, so $\Pi$ can not be a decomposition associated to $\theta(e_1)$. By Lemma \ref{lemma3.1},
$\gamma=\beta\theta(e_1)$ is a $4s$-cycle.
The construction used in the proof of
Theorem \ref{onevertex} therefore gives a transitive
representation containing $\beta$.
The resulting tiling is minimal by
Theorem \ref{minimal criterion}.

\end{proof}

\section{Discussion: relation to Fuchsian groups}
\label{sec:fuchsian-groups}

The existence results above can also be interpreted in terms of
finite-index subgroups of the symmetry groups in Theorem~\ref{standard_symmetry}.
Let $D$ have vertices $V_1,\ldots,V_r$, assume $\chi(S_D)\leq0$, and
fix integers $k_1,\ldots,k_r\geq2$. Let $\widetilde{\mathcal{T}}$ be an associated
universal tiling with vertex types $V_1^{k_1},\ldots,V_r^{k_r}$, and
write $\Gamma=\operatorname{Sym}(\widetilde{\mathcal{T}})$ for the group given
in Theorem~\ref{standard_symmetry}. In particular, $\Gamma$ acts simply transitively on the tiles.

The quotient orbifold has underlying surface $S_D$ and cone points
of orders $k_1,\ldots,k_r$, so
\[
 \chi_{\mathrm{orb}}=\chi(S_D)-\sum_{i=1}^{r}\left(1-\frac1{k_i}\right)<0.
\]
After normalizing the curvature to $-1$, the universal cover is
$\mathbb H^2$. If $S_D$ is orientable, $\Gamma$ is a cocompact Fuchsian
group. If $S_D$ is nonorientable, $\Gamma$ is a cocompact non-Euclidean
crystallographic (NEC) group, whose orientation-preserving subgroup
$\Gamma^+$ is Fuchsian of index two. In both cases, a subgroup of
$\Gamma$ acts freely on $\mathbb H^2$ if and only if it is torsion-free:
point stabilizers in a discrete group are finite, and every
finite-order hyperbolic isometry has a fixed point.

We call a torsion-free subgroup $H\leq\Gamma$ \emph{maximal torsion-free}
if there is no torsion-free subgroup $K$ with $H\subsetneq K\leq\Gamma$.
This refers to maximality among torsion-free subgroups, rather than
maximality among all proper subgroups of $\Gamma$.

For a torsion-free subgroup $H\leq\Gamma$ of finite index, the quotient
$\mathcal{T}_H=\widetilde{\mathcal{T}}/H$ is a tiling of the closed hyperbolic surface
$\mathbb H^2/H$. Since $\Gamma$ acts simply transitively on the tiles,
\[
 \#\{\text{tiles of }\mathcal{T}_H\}=[\Gamma:H].
\]
The covering correspondence identifies a nontrivial descent of $\mathcal{T}_H$
with an inclusion $H\subsetneq K\leq\Gamma$ in which $K$ is torsion-free;
the degree of the descent is $[K:H]$. Consequently,
\[
 \mathcal{T}_H\text{ is minimal}
 \quad\Longleftrightarrow\quad
 H\text{ is maximal torsion-free in }\Gamma.
\]
Every surface tiling with universal tiling isometric to $\widetilde{\mathcal{T}}$
is obtained in this way, and isometric equivalence classes correspond
to conjugacy classes of these subgroups in $\Gamma$.

The following two propositions restate the existence results in group-theoretic
terms: 
\begin{proposition}[Finite-index torsion-free subgroups]
\label{prop:torsion-free-existence}
Let $\Gamma$ be the group of Theorem~\ref{standard_symmetry} with the data fixed above,
let $S$ be a closed connected surface, and let $d\geq1$ be an integer.
There exists a torsion-free subgroup $H\leq\Gamma$ with
$[\Gamma:H]=d$ and $\mathbb H^2/H$ homeomorphic to $S$ if and only if
\begin{enumerate}
\item $k_i\mid d$ for every $i$;
\item $\nu=\sum_{i=1}^{r}(d-d/k_i)$ is even;
\item $\chi(S)=d\chi(S_D)-\nu$;
\item if $S_D$ is orientable, then $S$ is orientable;
\item if $S_D$ is nonorientable and $S$ is orientable, then
$2k_i\mid d$ for every $i$.
\end{enumerate}
In particular, if the topological type of $S$ is not prescribed, the
possible indices are exactly the positive multiples of $d'$, where
$d'$ is the least positive integer such that $k_i\mid d'$ for every
$i$ and $\sum_{i=1}^{r}(d'-d'/k_i)$ is even.
\end{proposition}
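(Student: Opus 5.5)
The plan is to translate the statement into Theorem~\ref{case12} through the dictionary set up just before the proposition. Given a torsion-free subgroup $H\le\Gamma$ of index $d$, the quotient $\mathcal{T}_H=\widetilde{\mathcal{T}}/H$ is a tiling of the closed surface $\mathbb H^2/H$. It consists of $[\Gamma:H]=d$ congruent most curvilinear tiles associated to $D$, with vertex types $V_1^{k_1},\ldots,V_r^{k_r}$. This holds because the vertex types are read off locally from $\widetilde{\mathcal{T}}$ and $H$ acts freely. The necessity direction of Theorem~\ref{case12}, equivalently the Hurwitz conditions applied to the branched covering of Lemma~\ref{lem:branched-cover}, part~1, then gives conditions 1--5 for $S\cong\mathbb H^2/H$.

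For sufficiency, assume conditions 1--5 hold. Theorem~\ref{case12} provides a constant-curvature metric on $S$ and a tiling $\mathcal{T}$ of $S$ by $d$ congruent tiles associated to $D$ with the prescribed vertex types. Since $\chi_{\mathrm{orb}}<0$, the curvature is negative by Lemma~\ref{lem:branched-cover}, and after rescaling it equals $-1$. Lifting $\mathcal{T}$ to the universal cover gives a tiling of $\mathbb H^2$ with prototile data $(D,k_i)$. The deck group $H$ acts by tiling symmetries and freely, and it has $d$ orbits on tiles.

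The key step, and the main obstacle, is identifying this lifted tiling with the fixed $\widetilde{\mathcal{T}}$ so that $H$ becomes a subgroup of $\Gamma$. The lifted tiling is not literally $\widetilde{\mathcal{T}}$, since the metric and prototile shape come from the construction. I would handle this through the orbifold picture of Lemma~\ref{lem:branched-cover}. Both tilings are lifts of the single-tile tiling $\mathcal{T}_D$ of the hyperbolic orbifold $\mathcal O=(S_D;k_1,\dots,k_r)$. The symmetry group of each is the orbifold fundamental group, with the presentation of Theorem~\ref{standard_symmetry}, acting simply transitively on tiles. Choosing a base tile in each, the unique equivariant tile bijection identifies the two symmetry groups with the same abstract group $\Gamma$. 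Under this identification $H$ corresponds to a subgroup of $\Gamma$ of index $d$, equal to the number of $H$-orbits of tiles. It acts freely, hence is torsion-free. The quotient $\mathbb H^2/H$ is homeomorphic to $S$, because the equivariant combinatorial isomorphism of tilings induces a homeomorphism of quotients. If one wants the metric statement for the fixed $\widetilde{\mathcal{T}}$, it suffices to note that $\mathbb H^2/H$ with the fixed metric is a closed surface with the same combinatorial cell structure as $S$, so it is homeomorphic to $S$. I would check that the homeomorphism type depends only on the combinatorics, using Euler characteristic and orientability via condition~3.

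For the final claim, I would drop the prescription on $S$. Conditions 1 and 2 force $d$ to be a multiple of $d'$, by the argument already given before Theorem~\ref{minimal criterion}: $d=q\lambda$ with parity forcing $d'\mid d$. Conversely, for $d=sd'$, I would take $S$ orientable if $S_D$ is orientable, and nonorientable otherwise, with $\chi(S)$ defined by condition~3. Condition~5 is then vacuous. Condition~3 must yield a genuine surface, which means $\chi(S)\le 2$ with even value in the orientable case. The Euler characteristic $d\chi_{\mathrm{orb}}<0$ makes this automatic in the nonorientable case. In the orientable case, parity holds because $d\chi(S_D)$ is even and $\nu$ is even. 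Applying the first part then gives a subgroup of index $d$.
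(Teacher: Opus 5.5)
Your proposal is correct and follows the paper's route. The paper presents the proposition simply as a reformulation of Theorem~\ref{case12}, using the correspondence between finite-index torsion-free $H\le\Gamma$ and surface tilings $\widetilde{\mathcal T}/H$ with $[\Gamma:H]$ tiles, and the remark that $\Gamma$ depends as an abstract group only on $D$ and the $k_i$. That remark is exactly how you identify the lifted tiling with the fixed $\widetilde{\mathcal T}$, and your derivation of the index set (conditions 1--2 force $d'\mid d$; for $d=sd'$, taking $S$ with the orientability of $S_D$ and $\chi(S)=d\chi_{\mathrm{orb}}<0$ satisfies 3--5) fills in details the paper leaves implicit.
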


This is the reformulation of Theorem~\ref{case12} in the present hyperbolic setting.
In contrast, minimality imposes maximality among torsion-free subgroups.
Theorems~\ref{theorem1}, \ref{onevertex}, and~\ref{22} give the following stronger existence statements.

\begin{proposition}[Maximal torsion-free subgroups of prescribed index]
\label{prop:maximal-torsion-free-existence}
With $\Gamma$ and $d'$ as above, there exists a maximal torsion-free
subgroup $H\leq\Gamma$ with $[\Gamma:H]=sd'$ in each of the following cases:
\begin{enumerate}
\item $r\geq2$ and $s$ is any sufficiently large prime;
\item $r=1$, $k_1\geq3$, and $s$ is any integer with $s\geq1$;
\item $r=1$, $k_1=2$, and $s=2^m$ for any integer $m\geq0$.
\end{enumerate}
For each fixed choice of the data in these cases, $\Gamma$ therefore
has infinitely many conjugacy classes of maximal torsion-free
subgroups of finite index.
\end{proposition}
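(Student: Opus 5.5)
The plan is to translate Theorems~\ref{theorem1}, \ref{onevertex} and~\ref{22} through the dictionary set up above. By that dictionary, torsion-free finite-index subgroups $H\le\Gamma$ correspond to surface tilings $\mathcal{T}_H=\widetilde{\mathcal{T}}/H$ with $[\Gamma:H]$ tiles. Minimal tilings correspond to maximal torsion-free subgroups.

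\textbf{Step 1.} Take a minimal tiling $\mathcal T$ from the relevant theorem, with $sd'$ tiles, planar diagram $D$ and vertex types $V_1^{k_1},\dots,V_r^{k_r}$:
\begin{itemize}
\item case 1 uses Theorem~\ref{theorem1} with $p=r-1\ge1$;
\item case 2 uses Theorem~\ref{onevertex};
\item case 3 uses Theorem~\ref{22}.
\end{itemize}
Since $\chi_{\mathrm{orb}}<0$, the surface carrying $\mathcal T$ is hyperbolic, so its universal tiling lives on $\mathbb H^2$.

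\textbf{Step 2.} Identify this universal tiling with $\widetilde{\mathcal T}$, i.e.\ show its symmetry group is $\Gamma$ acting as the fixed one. Lemma~\ref{symmetry} and Theorem~\ref{standard_symmetry} show the symmetry group has the same presentation. Both universal tilings are lifts of the orbifold $S_D$ with cone orders $k_i$. So after normalizing curvature to $-1$, the deck group $\pi_1(S)$ embeds in $\Gamma$ as a torsion-free subgroup $H$ of index $sd'$, with $\mathcal T\cong\mathcal T_H$.

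This step is where I expect the main subtlety. The constant-curvature metric on the orbifold is not unique (Teichmüller moduli), so two universal tilings need not be isometric. I would handle this by passing to the abstract group. Fix an isomorphism of $\Gamma$ with the presented group, and realize the branched covering of Lemma~\ref{lem:branched-cover} directly through the fixed orbifold structure $\mathbb H^2/\Gamma$. Concretely, the monodromy $\theta$ defines a transitive action of $\Gamma$ on $\{1,\dots,sd'\}$ in which each $c_i$ acts with cycle type $[k_i,\dots,k_i]$. The stabilizer $H$ of a point then has index $sd'$. It is torsion-free because every torsion element is conjugate to a power of some $c_i$, and that power acts freely. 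In this formulation, minimality is exactly the criterion of Theorem~\ref{minimal criterion} applied to the action.

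\textbf{Step 3.} Verify maximality directly in the action picture. A torsion-free $K\supsetneq H$ gives the block system $\Gamma/H\to\Gamma/K$ of order $l=[K:H]>1$. Torsion-freeness of $K$ means no conjugate of $c_i^j$ with $c_i^j\ne1$ fixes a block. Since $c_i$ has order $k_i$ in $\Gamma$, this is equivalent to $\widehat{\theta(c_i)}$ having all cycles of length $k_i$, i.e.\ $\theta(c_i)\prec\Pi$. Hence Theorem~\ref{minimal criterion} (with $l\mid s$ forced as in its derivation) rules $K$ out, and $H$ is maximal torsion-free.

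\textbf{Step 4.} For the final sentence, note that conjugate subgroups have equal index. In each case the admissible values of $s$ are unbounded (infinitely many large primes, all $s\ge1$, or all $2^m$), so the indices $sd'$ are unbounded. This gives infinitely many conjugacy classes of maximal torsion-free subgroups of finite index.
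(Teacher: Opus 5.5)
Your proposal is correct and takes the paper's route. You apply Theorems~\ref{theorem1}, \ref{onevertex} and~\ref{22} through the correspondence between minimal tilings and maximal torsion-free subgroups, you avoid the non-uniqueness of the metric by working with the abstract group $\Gamma$, and you get infinitely many conjugacy classes from the unboundedness of the indices $sd'$. Your Steps~2--3 (taking $H$ as a point stabilizer of the monodromy action, and matching torsion-free overgroups with block systems satisfying $\theta(c_i)\prec\Pi$, where $l\mid s$ is forced) spell out algebraically what the paper compresses into the dictionary and its remark that isomorphisms preserve index, torsion-freeness and maximality among torsion-free subgroups.
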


The abstract group in Theorem~\ref{standard_symmetry} depends only on $D$ and the prescribed
vertex types. Hence these conclusions concern a fixed group even if
the geometric realizations of the prototile vary: group isomorphisms
preserve index, torsion-freeness, and maximality among torsion-free
subgroups. The assertion about infinitely many conjugacy classes
follows because the permitted indices are unbounded and conjugate
subgroups have the same index. The listed indices are not asserted
to exhaust all possible indices of maximal torsion-free subgroups.

One motivation for introducing minimal tilings was to seek a
classification of surface tilings by first classifying those that
cannot descend further. Every tiling descends to a minimal one after
finitely many nontrivial descents, since each descent strictly decreases
the positive integer counting its tiles. The results above show,
however, that even with $D$ and the vertex types fixed, minimal tilings
occur with arbitrarily many tiles. Thus minimality does not reduce
the classification problem to a finite list of tilings, nor does it
provide an upper bound on their numbers of tiles. In group-theoretic
terms, one must account for infinitely many conjugacy classes of
maximal torsion-free subgroups of the same group $\Gamma$.
This rules out a classification by finite enumeration under these
constraints, but does not rule out a classification by parameters,
algebraic invariants, or an effective procedure.
Here torsion-freeness is equivalent to a free action, not to being a
free group: $H$ is the fundamental group of a closed hyperbolic surface.
In the nonorientable case, maximality is taken in the full group
$\Gamma$, not merely in its Fuchsian subgroup $\Gamma^+$.

\end{document}